\newtheorem{Theorem}{Theorem}[section]
\newtheorem{Lemma}[Theorem]{Lemma}
\newtheorem{Proposition}[Theorem]{Proposition}
\newtheorem{Definition}[Theorem]{Definition}
\newtheorem{Corollary}[Theorem]{Corollary}
\newtheorem{Remark}[Theorem]{Remark}
\newtheorem{Example}[Theorem]{Example}
\newtheorem{Notation}[Theorem]{Notation}
\newtheorem{Text}[Theorem]{}
\newcommand{\cA}{{\mathcal A}}
\newcommand{\cB}{{\mathcal B}}
\newcommand{\cC}{{\mathcal C}}
\newcommand{\cD}{{\mathcal D}}
\newcommand{\cE}{{\mathcal E}}
\newcommand{\cF}{{\mathcal F}}
\newcommand{\cH}{{\mathcal H}}
\newcommand{\cI}{{\mathcal I}}
\newcommand{\cId}{{\mathcal Id}}
\newcommand{\cM}{{\mathcal M}}
\newcommand{\cN}{{\mathcal N}}
\newcommand{\cP}{{\mathcal P}}
\newcommand{\cQ}{{\mathcal Q}}
\newcommand{\cR}{{\mathcal R}}
\newcommand{\cS}{{\mathcal S}}
\newcommand{\cT}{{\mathcal T}}
\newcommand{\cU}{{\mathcal U}}
\newcommand{\cZ}{{\mathcal Z}}
\newcommand\Arr{\mathbf{Arr}}
\newcommand\Grp{\mathbf{Grp}}
\newcommand\XMod{\mathbf{XMod}}
\newcommand\Null{\mathbf{Null}}
\newcommand\Ker{\mathrm{Ker}}
\newcommand\id{\mathrm{id}}
\newcommand\Id{\mathrm{Id}}
\newcommand\ClId{\mathrm{ClId}}
\newcommand\Idl{\mathrm{Idl}}
\newcommand\Retr{\mathrm{Retr}}
\newcommand\ob{\mathrm{ob}}
\newcommand\arr{\mathrm{ar}}
\newcommand\conj{\mathrm{conj}}
\begin{document}

\title{Homotopy torsion theories}

\author{Sandra Mantovani\footnote{Dipartimento di matematica, Universit\`a degli studi di Milano,
Via C. Saldini 50, 20133 Milano, Italia, sandra.mantovani@unimi.it} , 
Mariano Messora\footnote{Dipartimento di matematica, Universit\`a degli studi di Milano,
Via C. Saldini 50, 20133 Milano, Italia, mariano.messora@unimi.it} ,
Enrico M. Vitale\footnote{Institut de recherche en math\'ematique et physique, Universit\'e catholique de Louvain,
Chemin du Cyclotron 2, B 1348 Louvain-la-Neuve, Belgique, enrico.vitale@uclouvain.be}} 

\maketitle

\noindent {\bf Abstract:}
\noindent In the context of categories equipped with a structure of nullhomotopies, we introduce the notion of homotopy torsion theory.
As special cases, we recover pretorsion theories as well as torsion theories in multi-pointed categories and in pre-pointed categories.
Using the structure of nullhomotopies induced by the canonical string of adjunctions between a category $\cA$ and the category
$\Arr(\cA)$ of arrows, we give a new proof of the correspondence between orthogonal factorization systems in $\cA$ and homotopy 
torsion theories in $\Arr(\cA),$ avoiding the request on the existence of pullbacks and pushouts in $\cA.$ Moreover, such a correspondence 
is extended to weakly orthogonal factorization systems and weak homotopy torsion theories.
\\ \\
\noindent{\it Keywords:} nullhomotopy, homotopy kernel, arrow category, torsion theory, factorization system. \\
{\it 2020 MSC:} 18A30, 18A32, 18E40, 18N99

\tableofcontents

\section{Introduction}\label{SecIntro}

There is a striking analogy between orthogonal factorization systems and torsion theories. If you have an orthogonal factorization
system in a category, from each arrow you get a pair of composable arrows, and the two arrows lie in two assigned classes of arrows 
whose intersection is reduced to the class of isomorphisms. If you have a torsion theory in an abelian category, from each object you 
get a pair of objects connected by a short exact sequence, and the two objects lie in two assigned subcategories whose intersection is 
reduced to the zero object. This analogy becomes even more strict considering notions of torsion theory adapted to general (not 
necessarily abelian) categories.

Since the category $\Arr(\cA)$ of arrows of a category $\cA$ is the standard way to turn arrows into objects, one expects that a precise relation 
between orthogonal factorization systems in $\cA$ and torsion theories in $\Arr(\cA)$ exists. 
A new direction to establish and describe such a relation has been indicated in the unpublished talk \cite{Sandra}. 
More recently, and independently from \cite{Sandra}, the correspondence between orthogonal factorization systems in $\cA$ and a certain kind of torsion 
theories in $\Arr(\cA)$ has been established in \cite{GJ}. The proof of the main result in \cite{GJ} depends on previous results on (co)monads
from \cite{GT} and on the reformulation of the definition of orthogonal factorization system given in \cite{KT}.

The notion of torsion theory used in \cite{GJ} is based on the notion of pre-pointed category and, more precisely, on pre-(co)kernels 
(not to be confused with the prekernels studied in \cite{FF, FFG}: pre-kernels from \cite{GJ} are weak prekernels). 
Pre-kernels are not defined in terms of a universal property, they are defined via an ad-hoc construction which depends on the 
reflective and coreflective character of the subcategory of trivial objects. Nevertheless, pre-kernels have a universal property: 
they are a special instance of the so-called homotopy kernels, as pointed out in \cite{SnailEV} in the special case of $\Arr(\cA).$

Any adjunction (in fact, any pre-radical and any pre-coradical) induces a structure of nullhomotopies (see Definition \ref{DefNullHom}), 
and any structure of nullhomotopies 
carries with it a notion of homotopy kernel. Therefore, the idea developed in the present paper will allow us to gather in the same framework
various notions of pretorsion and 
torsion theory (in pointed categories, in multi-pointed categories, in pre-pointed categories) appearing in the literature.
We will show that the latter are all special cases of a general notion based on homotopy kernels.
We call such a notion homotopy torsion theory and we formulate its definition in any category 
equipped with a structure of nullhomotopies. The main test for our definition is to get a revisited version of the result in \cite{GJ} cited above: 
we give a self-contained proof that orthogonal factorization systems in a category $\cA$ correspond to homotopy torsion theories in $\Arr(\cA)$ 
with respect to the nullhomotopy structure induced by the canonical string of adjunctions between $\cA$ and $\Arr(\cA).$
Moreover, basically the same proof allows us to extend this result to a correspondence between weakly orthogonal factorization systems
and weak homotopy torsion theories.

The layout of the paper is as follows.
In order to express the notion of homotopy torsion theory, in Section \ref{SecNullRad} we recall the notion of category with nullhomotopies. 
We give a method to construct examples from pre-(co)radicals and we compare structure of nullhomotopies with ideals of arrows. 
The case of pre-pointed categories and, in particular, the example of $\Arr(\cA),$ are considered in Section \ref{SecPrePArr}.
In Section \ref{SecHKHC} and in Section \ref{SecExistHK} we discuss the standard notion of homotopy kernel: definition, 
main properties and conditions for the existence. In Section \ref{SecCharPrep} we characterize pre-pointed categories among categories equipped 
with a structure of nullhomotopies. In Section \ref{SecHTT} we define homotopy torsion theories with respect to a given structure of nullhomotopies
and, in Section \ref{SecPretorsion}, we compare them with pretorsion theories. It turns out that pretorsion theories coincide with homotopy torsion 
theories when the structure of nullhomotopies is reduced to a closed ideal of arrows. 
The correspondence 
between various types of factorization systems in a category $\cA$ and homotopy torsion theories in $\Arr(\cA)$ is developed in the rest of 
the paper: the case of (weakly) orthogonal factorization systems in Section \ref{SecFSHTT}, the case of quasi-proper and proper orthogonal factorization
systems in Section \ref{SecQP} and in Section \ref{SecProper}, and finally the pointed case in Section \ref{SecPointed}. 
In order to help the reader, Section \ref{SecPanoramic} gives a global view of the correspondences studied in Sections \ref{SecFSHTT},
\ref{SecQP} and \ref{SecProper}.

Finally, let us mention that large portions of this paper also appear in the Master Thesis \cite{MM22} of the second author, written under the 
supervision of the first author. 

\section{Nullhomotopies, ideals and pre-radicals}\label{SecNullRad}

For the notion of structure of nullhomotopies on a category, we follow \cite{SnailEV,JMMVFibr} and adopt a definition a bit stronger than the 
original one in \cite{GR97}.

\begin{Definition}\label{DefNullHom}{\rm
A {\it structure of nullhomotopies} $\Theta$ on a category $\cB$ is given by:
\begin{enumerate}
\item[1)] For every arrow $g$ in $\cB,$ a set $\Theta(g)$ whose elements are called nullhomotopies on $g.$
\item[2)] For every triple of composable arrows $\xymatrix{W \ar[r]^f & X \ar[r]^g & Y \ar[r]^h & Z},$ a map 
$$h \circ - \circ f \colon \Theta(g) \to \Theta(h \cdot g \cdot f)$$
such that, for every $\varphi \in \Theta(g),$ one has
\begin{enumerate}
\item $(h' \cdot h) \circ \varphi \circ (f \cdot f') = h' \circ (h \circ \varphi \circ f) \circ f'$ whenever the compositions $h' \cdot h$ and $f \cdot f'$ 
are defined,
\item $\id_Y \circ \varphi \circ \id_X = \varphi.$
\end{enumerate}
\end{enumerate}
}\end{Definition}

\begin{Notation}\label{NotNull}{\rm
To visualize a nullhomotopy $\lambda \in \Theta(g)$ in a diagram, we will sometimes use the notation
$$\xymatrix{X \ar@/^1.0pc/[rr]^{g} \ar@/_1.0pc/@{-->}[rr]_{} \ar@{}[rr]|{\lambda \; \Uparrow} & & Y}$$
This notation comes from the fact that, in a 2-category with a zero object, we get a structure of nullhomotopies by taking as 
nullhomotopies on an arrow $g$ all the 2-cells from the zero arrow to $g$ (or from $g$ to the zero arrow).
}\end{Notation}

\begin{Remark}\label{RemDefAlter}{\rm
When, in Definition \ref{DefNullHom}, $f=\id_X$ or $h=\id_Y,$ we write $h \circ \varphi$ and $\varphi \circ f$ instead of $h \circ \varphi \circ \id_X$ 
and $\id_Y \circ \varphi \circ f.$ It is possible to restate Definition \ref{DefNullHom} using $h \circ \varphi$ and $\varphi \circ f$ as primitive 
operations and asking that $h \circ (\varphi \circ f) = (h \circ \varphi) \circ f, h' \circ (h \circ \varphi) = (h' \cdot h) \circ \varphi, 
(\varphi \circ f) \circ f' = \varphi \circ (f \cdot f'), \id_Y \circ \varphi = \varphi = \varphi \circ \id_X.$
}\end{Remark}

There is an obvious notion of morphism between structures of nullhomotopies. It is a special case of the notion of morphism between 
categories with nullhomotopies from \cite{EVhcc}. (If $\cB$ is a category, we denote by $\arr(\cB)$ the possibly large set of its arrows, 
and by $\ob(\cB)$ the possibly large set of its objects.)

\begin{Definition}\label{DefMorphNull}{\rm
Let $\Theta$ and $\Theta'$ be two structures of nullhomotopies on the same category $\cB.$ A {\it morphism} $\cI \colon \Theta \to \Theta'$ 
is given by a family of maps indexed by the arrows of $\cB$
$$\{ \cI_g \colon \Theta(g) \to \Theta'(g) \}_{g \in \arr(\cB)}$$
such that, for every triple of composable arrows 
$\xymatrix{W \ar[r]^f & X \ar[r]^g & Y \ar[r]^h & Z},$ the following diagram commutes
$$\xymatrix{\Theta(g) \ar[rr]^{\cI_g} \ar[d]_{h \circ - \circ f} & & \Theta'(g) \ar[d]^{h \circ - \circ f} \\
\Theta(h \cdot g \cdot f) \ar[rr]_{\cI_{h \cdot g \cdot f}} & & \Theta'(h \cdot g \cdot f)}$$
The structures of nullhomotopies on $\cB$ together with their morphisms constitute a category denoted by $\Null(\cB).$
}\end{Definition}

Pretorsion theories, considered in Section \ref{SecPretorsion}, are based on the notion of ideal of arrows, which provides (via Lemma \ref{LemmaNullId}) our 
first example of structure of nullhomotopies. We start recalling from \cite{GJ} the notion of (closed) ideal of arrows together with some basic facts about it.

\begin{Definition}\label{DefIdeal}{\rm
Let $\cB$ be a category.
\begin{enumerate}
\item A subset $\cZ_1 \subseteq \arr(\cB)$ is an {\it ideal} if it satisfies the following condition: if $g \colon X \to Y$ is in $\cZ_1,$ then for 
any pair of arrows $f \colon W \to X$ and $h \colon Y \to Z,$ the composite arrow $h \cdot g \cdot f \colon W \to Z$ is still in $\cZ_1.$
\item If $\cZ_1$ is an ideal, an object $N$ is $\cZ_1$-{\it trivial} if $\id_N \colon N \to N$ is in $\cZ_1.$
\item An ideal $\cZ_1$ is {\it closed} when an arrow $g$ is in $\cZ_1$ (if and) only if it factors through some $\cZ_1$-trivial objects.
\end{enumerate}
}\end{Definition}

\begin{Text}\label{TextRetrId}{\rm 
Consider the following constructions ($\cP(X)$ denotes the poset of subsets of a set $X$):
\begin{enumerate}
\item[-] $i \colon \cP(\ob(\cB)) \to \cP(\arr(\cB)), \; i(\cZ_0) = \{ g \in \arr(\cB) \mid g \mbox{ factors through some objects in } \cZ_0 \}$
\item[-] $t \colon \cP(\arr(\cB)) \to \cP(\ob(\cB)), \; t(\cZ_1) =  \{ \cZ_1\mbox{-trivial objects} \} = \{ N \in \ob(\cB) \mid \id_N \in \cZ_1 \}$
\end{enumerate}
Plainly, we have:
\begin{enumerate}
\item If $\cZ_0 \subseteq \cZ_0',$ then $i(\cZ_0) \subseteq i(\cZ_0').$ If $\cZ_1 \subseteq \cZ_1',$ then $t(\cZ_1) \subseteq t(\cZ_1').$
\item For any $\cZ_0 \subseteq \ob(\cB),$ $i(\cZ_0)$ is a closed ideal and $\cZ_0 \subseteq t(i(\cZ_0)).$
\item If $\cZ_1$ is an ideal, then $t(\cZ_1)$ is closed under retracts and $i(t(\cZ_1)) \subseteq \cZ_1.$
\item The set $t(i(\cZ_0))$ of $i(\cZ_0)$-trivial objects is the smallest subset of $\ob(\cB)$ containing $\cZ_0$ and closed under retracts. 
\item If $\cZ_1$ is an ideal, the set $i(t(\cZ_1))$ is the largest closed ideal contained in $\cZ_1.$
\end{enumerate}
Moreover, the constructions $i \colon \cP(\ob(\cB)) \to \cP(\arr(\cB))$ and $t \colon \cP(\arr(\cB)) \to \cP(\ob(\cB))$ restrict to an isomorphism 
$$\Retr(\cB) \simeq \ClId(\cB)$$
where $\Retr(\cB) \subseteq \cP(\ob(\cB))$ is the poset of the subsets of $\ob(\cB)$ closed under retracts, and $\ClId(\cB) \subseteq \cP(\arr(\cB))$ 
is the poset of closed ideals.
}\end{Text}

\begin{Text}\label{TextDiscStr}{\rm
Now we compare ideals and structures of nullhomotopies: structures of nullhomotopies are a wide generalization of ideals which correspond 
to discrete structures. We call a structure of nullhomotopies $\Theta$ {\it discrete} when, for every arrow $g,$ the set $\Theta(g)$ is either the 
singleton or the empty set. }\end{Text}

\begin{Lemma}\label{LemmaNullId}
Let $\cB$ be a category and $\Idl(\cB)$ the poset of ideals of arrows in $\cB.$ Then $\Idl(\cB)$ is equivalent to the reflective subcategory of 
$\Null(\cB)$ spanned by the discrete structures.
\end{Lemma}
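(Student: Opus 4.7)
The plan is to exhibit an explicit isomorphism between $\Idl(\cB)$ and the full subcategory of $\Null(\cB)$ spanned by the discrete structures, and then to display a left adjoint to the inclusion of this subcategory into $\Null(\cB).$

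First, I will turn an ideal $\cZ_1$ into a discrete structure $\Theta_{\cZ_1}$ by setting $\Theta_{\cZ_1}(g) = \{*\}$ if $g \in \cZ_1$ and $\Theta_{\cZ_1}(g) = \emptyset$ otherwise; the action $h \circ - \circ f$ is forced by discreteness, and is well defined precisely because $\cZ_1$ is closed under pre- and post-composition. Conversely, from a discrete $\Theta$ I extract $\cZ_\Theta = \{ g \in \arr(\cB) \mid \Theta(g) \neq \emptyset \},$ whose ideal axiom follows from the very existence of the action maps of $\Theta.$ These two assignments are mutually inverse. Moreover, between two discrete structures a morphism is at most one (since each $\Theta(g)$ is either empty or a singleton) and exists exactly when $\cZ_\Theta \subseteq \cZ_{\Theta'};$ the naturality squares of Definition \ref{DefMorphNull} hold automatically. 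Hence the correspondence is an isomorphism of posets.

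Second, for the reflection, given any structure $\Theta$ on $\cB$ I will take $\Theta^d$ to be the discrete structure associated with the set $\{ g \mid \Theta(g) \neq \emptyset \};$ to see that this set really is an ideal it suffices to apply the action maps of $\Theta,$ which send non-empty fibres to non-empty fibres. The unit $\eta_\Theta \colon \Theta \to \Theta^d$ collapses each non-empty $\Theta(g)$ onto the unique element of $\Theta^d(g),$ and the compatibility with $h \circ - \circ f$ is trivial because the codomain is a singleton whenever it is inhabited.

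Third, I will verify the universal property. If $D$ is discrete and $\cJ \colon \Theta \to D$ is a morphism, then $D(g)$ is non-empty whenever $\Theta(g)$ is, so $\cZ_{\Theta^d} \subseteq \cZ_D;$ by the poset isomorphism of the first step, this inclusion corresponds to the unique morphism $\overline{\cJ} \colon \Theta^d \to D$ with $\overline{\cJ} \circ \eta_\Theta = \cJ.$ The main obstacle is just bookkeeping: one must keep track, at each arrow $g,$ of whether the relevant fibre is empty or a singleton, and verify that the construction sending $\Theta$ to $\{g \mid \Theta(g) \neq \emptyset\}$ lands in $\Idl(\cB).$ Both facts are immediate from the axioms of a structure of nullhomotopies, so no substantive technical difficulty arises.
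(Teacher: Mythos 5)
Your proposal is correct and follows exactly the paper's route: the embedding sends an ideal $\cZ_1$ to the discrete structure $\Theta_{\cZ_1},$ and the reflection sends $\Theta$ to the ideal of arrows with non-empty fibre; the paper merely states these two constructions, while you additionally spell out the (routine) verifications of fullness, faithfulness and the universal property of the unit.
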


\begin{proof}
The full and faithful functor $\Idl(\cB) \to \Null(\cB)$ is realized by associating with an ideal $\cZ_1$ the discrete structure on $\cZ_1 \colon$
$$\Theta_{\cZ_1}(g) = \left\{ \begin{array}{rcl} \ast & \mbox{if} & g \in \cZ_1 \\ \emptyset & \mbox{if} & g \notin \cZ_1 \end{array} \right.$$
The reflection $\Null(\cB) \to \Idl(\cB)$ associates with a structure of nullhomotopies $\Theta$ the ideal $\cZ_1(\Theta)$ of the arrows 
$g \in \arr(\cB)$ such that $\Theta(g)$ is non-empty.
\end{proof}

\begin{Example}\label{ExNullSubcat}{\rm
Here there is another easy way to construct structures of nullhomotopies.
Let $\cU \colon \cA \to \cB$ be a full and faithful functor. We get a structure of nullhomotopies $\Theta_{\cA}$ on $\cB$ by putting, 
for arrows  $\xymatrix{W \ar[r]^-{f} & X \ar[r]^-{g} & Y \ar[r]^-{h} & Z },$
$$\Theta_{\cA}(g) = \{ (g_1,A,g_2) \mid g = g_2 \cdot g_1 \colon X \to \cU A \to Y, A \in \cA \}, \;
h \circ (g_1,A,g_2) \circ f = (g_1 \cdot f, A, h \cdot g_2)$$
}\end{Example} 

\begin{Text}\label{TextRad}{\rm
Usually, a pre-radical in a category $\cB$ is defined as a subfunctor $\cR$ of the identity functor $\cId \colon \cB \to \cB.$ We extend the 
terminology to any natural transformation $\beta \colon \cR \Rightarrow \cId$ on any endofunctor $\cR \colon \cB \to \cB.$ We use 
pre-radicals and pre-coradicals in $\cB$ to construct structures of nullhomotopies.
\begin{enumerate}
\item If $\beta \colon \cR \Rightarrow \cId \colon \cB \to \cB$ is a pre-radical, we get a structure of nullhomotopies on $\cB$ by putting
$\Theta_{\beta}(g) = \{ \psi \colon X \to \cR Y \mid \beta_Y \cdot \psi = g \}$ and $h \circ \psi \circ f = \cR(h) \cdot \psi \cdot f$
$$\xymatrix{ & \cR Y \ar[rd]^{\beta_Y} \\ X \ar[ru]^{\psi} \ar[rr]_{g} & & Y }$$
\item If $\gamma \colon \cId \Rightarrow \cS \colon \cB \to \cB$ is a pre-coradical, we get a structure of nullhomotopies on $\cB$ by putting
$\Theta_{\gamma}(g) = \{ \varphi \colon \cS X \to Y \mid \varphi \cdot \gamma_X = g \}$ and $h \circ \varphi \circ f = h \cdot \varphi \cdot \cS(f)$
$$\xymatrix{ & \cS X \ar[rd]^{\varphi} \\ X \ar[ru]^{\gamma_X} \ar[rr]_{g} & & Y }$$
\item If $\beta \colon \cR \Rightarrow \cId$ is a pre-radical and $\gamma \colon \cId \Rightarrow \cS$ is a pre-coradical, we get a structure of 
nullhomotopies on $\cB$ by putting $\Theta_{\gamma,\beta}(g) = \{ \lambda \colon \cS X \to \cR Y \mid \beta_Y \cdot \lambda \cdot \gamma_X = g \}$ 
and $h \circ \lambda \circ f = \cR(h) \cdot \lambda \cdot \cS(f)$
$$\xymatrix{ & \cS X \ar[r]^{\lambda} & \cR Y \ar[rd]^{\beta_Y} \\ X \ar[ru]^{\gamma_X} \ar[rrr]_{g} & & & Y }$$
\end{enumerate}
}\end{Text}

\begin{Remark}\label{RemTriv}{\rm
Two comments on the constructions in \ref{TextRad}.
\begin{enumerate}
\item Observe that, if $\beta$ in \ref{TextRad}.1 is an isomorphism, then the structure $\Theta_{\beta}$ is the terminal object of $\Null(\cB).$
This means that, for any arrow $g$ in $\cB,$ the set $\Theta_{\beta}(g)$ is reduced to a singleton. Indeed, the condition $\beta_Y \cdot \psi = g$ 
is equivalent to $\psi = \beta_Y^{-1} \cdot g.$ The same  happens if $\gamma$ in \ref{TextRad}.2 is an isomorphism and if both $\beta$ and 
$\gamma$ in \ref{TextRad}.3 are isomorphisms.
\item More in general, the structure $\Theta_{\beta}$ of \ref{TextRad}.1 is discrete if and only if, for every $Y \in \cB,$ the arrow $\beta_Y$ is a 
monomorphism.
The structure $\Theta_{\gamma}$ of \ref{TextRad}.2 is discrete if and only if, for every $X \in \cB,$ the arrow $\gamma_X$ is an epimorphism. 
\end{enumerate} 
}\end{Remark}

\section{Pre-pointed categories and the category of arrows}\label{SecPrePArr}

A string of adjunctions like the one involved in the next proposition is called a {\it pre-pointed category} in \cite{GJ}.

\begin{Proposition}\label{PropCUD}
Consider the following string of adjunctions
$$\xymatrix{ \cA \ar[rr]|{\cU} & & \cB \ar@<-1.5ex>[ll]_{\cC} \ar@<1.5ex>[ll]^{\cD} }
\;\;\;\;\; \cC \dashv \cU \dashv \cD$$
with units and counits given by 
$$\gamma_B \colon B \to \cU\cC B, \; \delta_A \colon \cC\cU A \to A, \;\; \alpha_A \colon A \to \cD\cU A,\;  \beta_B \colon \cU\cD B \to B.$$
If $\cU \colon \cA \to \cB$ is full and faithful, then:
\begin{enumerate}
\item The three structures of nullhomotopies on $\cA$ induced by the pre-radical $\delta$ and by the pre-coradical $\alpha$ are the terminal ones.
\item The three structures of nullhomotopies on $\cB$ induced by the pre-radical $\beta$ and by the pre-coradical $\gamma$ are isomorphic in $\Null(\cB).$
\end{enumerate}
\end{Proposition}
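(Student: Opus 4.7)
I would treat the two parts separately. For part (1), recall the classical fact that whenever $\cU$ is fully faithful, both the counit of $\cC \dashv \cU$ and the unit of $\cU \dashv \cD$ are natural isomorphisms. Hence $\delta$ and $\alpha$ are isomorphisms, and Remark \ref{RemTriv}.1 immediately yields that each of $\Theta_\delta$, $\Theta_\alpha$, $\Theta_{\alpha,\delta}$ is the terminal structure of $\Null(\cA)$.

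For part (2), the plan is to exhibit explicit bijections
\[
\Theta_\beta(g) \xleftarrow{\ \Phi\ } \Theta_{\gamma,\beta}(g) \xrightarrow{\ \Psi\ } \Theta_\gamma(g), \qquad \Phi(\lambda) = \lambda \cdot \gamma_X, \qquad \Psi(\lambda) = \beta_Y \cdot \lambda,
\]
coming from the two adjunction transposes. They land in the correct fibres since $\beta_Y \cdot \Phi(\lambda) = g = \Psi(\lambda) \cdot \gamma_X$, and they are bijections because fully faithfulness of $\cU$ identifies $\cB(\cU\cC X, \cU\cD Y)$ with $\cA(\cC X, \cD Y)$, which the two adjunctions in turn put in bijection with $\cB(X, \cU\cD Y)$ and $\cB(\cU\cC X, Y)$ via exactly $-\cdot\gamma_X$ and $\beta_Y \cdot -$.

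To upgrade $\Phi$ and $\Psi$ to morphisms in $\Null(\cB)$, I would verify compatibility with the action $h \circ - \circ f$. For $\Phi$, naturality of $\gamma$ gives $\cU\cC(f) \cdot \gamma_W = \gamma_X \cdot f$, hence
\[
\Phi(\cU\cD(h)\cdot\lambda\cdot\cU\cC(f)) = \cU\cD(h)\cdot\lambda\cdot\gamma_X\cdot f = \cU\cD(h)\cdot\Phi(\lambda)\cdot f,
\]
which is exactly $h \circ \Phi(\lambda) \circ f$ as computed in $\Theta_\beta$. The argument for $\Psi$ is dual, using naturality of $\beta$, namely $h \cdot \beta_Y = \beta_Z \cdot \cU\cD(h)$. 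The main obstacle, if any, is purely bookkeeping: the three structures have different conventions for $h \circ - \circ f$ (covariant only in $\cU\cD(h)$ for $\Theta_\beta$, only in $\cU\cC(f)$ for $\Theta_\gamma$, and in both for $\Theta_{\gamma,\beta}$), and one must invoke the appropriate naturality at each step; no conceptual obstruction appears.
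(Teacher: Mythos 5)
Your proposal is correct and takes essentially the same approach as the paper: part (1) is exactly the paper's argument via Remark \ref{RemTriv}.1, and for part (2) you use the same two comparison maps $\lambda \mapsto \beta_Y\cdot\lambda$ and $\lambda \mapsto \lambda\cdot\gamma_X$ out of the middle structure $\Theta_{\gamma,\beta}$, with the same naturality checks for compatibility with $h\circ -\circ f$. The only (harmless) difference is in how bijectivity is justified: the paper writes down explicit inverses, namely $\cU\cD(\varphi)\cdot\cU(\alpha_{\cC X})$ and $\cU(\delta_{\cD Y})\cdot\cU\cC(\psi)$, and verifies them by triangle identities and naturality, whereas you deduce bijectivity more abstractly from full faithfulness of $\cU$ together with the hom-set bijections of the two adjunctions.
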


\begin{proof}
1. This follows from Remark \ref{RemTriv}.1 because $\delta$ and $\alpha$ are isomorphisms. \\
2. We check the isomorphism $\Theta_{\gamma,\beta} \simeq \Theta_{\gamma}.$ 
Fix an arrow $g \colon X \to Y$ in $\cB$ and define $\cI_g \colon \Theta_{\gamma,\beta}(g) \to \Theta_{\gamma}(g)$ by
$$\cI_g(\lambda \colon \cU\cC X \to \cU\cD Y) = (\beta_Y \cdot \lambda \colon \cU\cC X \to \cU\cD Y \to Y)$$
The fact that $\cI_g$ is well-defined is obvious and the fact that $\cI$ is a morphism of nullhomotopy structures comes from the naturality 
of $\beta.$ In the opposite direction, define $\cI_g^{-1} \colon \Theta_{\gamma}(g) \to \Theta_{\gamma,\beta}(g)$ by
$$\cI_g^{-1}(\varphi \colon \cU\cC X \to Y) = (\cU\cD(\varphi) \cdot \cU(\alpha_{\cC X}) \colon \cU\cC X \to \cU\cD\cU\cC X \to \cU\cD Y)$$
The fact that $\cI_g^{-1}$ is well-defined comes from the naturality of $\beta$ and the fact that $\cI^{-1}$ is a morphism of nullhomotopy 
structures comes from the naturality of $\alpha.$ The fact that $\cI_g(\cI_g^{-1}(\varphi)) = \varphi$ is attested by the commutativity of the 
following diagram, where the triangle on the left commutes by one of the triangular identities and the triangle on the right commutes by 
naturality of $\beta \colon$
$$\xymatrix{\cU\cC X \ar[rr]^-{\cU(\alpha_{\cC X})} \ar[rrd]_{\id} & & \cU\cD\cU\cC X \ar[rr]^-{\cU\cD(\varphi)} \ar[d]^{\beta_{\cU\cC X}} 
& & \cU\cD Y \ar[rr]^-{\beta_Y} & & Y \\
& & \cU\cC X \ar[rrrru]_{\varphi} }$$
The fact that $\cI_g^{-1}(\cI_g(\lambda)) = \lambda$ is attested by the commutativity of the following diagram:
$$\xymatrix{\cU\cC X \ar[rr]^-{\cU(\alpha_{\cC X})} \ar[rrd]_{\id} & & \cU\cD\cU\cC X \ar[rr]^-{\cU\cD(\lambda)} \ar[d]^{\beta_{\cU\cC X}} 
& & \cU\cD\cU\cD Y \ar[d]_{\beta_{\cU\cD Y}} \ar[rr]^-{\cU\cD(\beta_Y)} & & \cU\cD Y \\
& & \cU\cC X \ar[rr]_-{\lambda} & & \cU\cD Y \ar[rru]_{\id} }$$
The triangle on the left commutes by one of the triangular identities, the square commutes by naturality of $\beta.$ Finally, the triangle on the 
right commutes since, by the triangular identities, $\cU\cD(\beta_Y)$ and $\beta_{\cU\cD Y}$ are both right-inverses of $\cU(\alpha_{\cD Y}),$
which is an isomorphism because $\cU$ is full and faithful. \\
Concerning the isomorphism $\Theta_{\gamma,\beta} \simeq \Theta_{\beta},$ we just recall the constructions. Fix an arrow $g \colon X \to Y$ in $\cB$ 
and define $\cI_g \colon \Theta_{\gamma,\beta}(g) \to \Theta_{\beta}(g)$ by
$$\cI_g(\lambda \colon \cU\cC X \to \cU\cD Y) = (\lambda \cdot \gamma_X \colon X \to \cU\cC X \to \cU\cD Y)$$
In the opposite direction, define $\cI_g^{-1} \colon \Theta_{\beta}(g) \to \Theta_{\gamma,\beta}(g)$ by
$$\cI_g^{-1}(\psi \colon X \to \cU\cD Y) = (\cU(\delta_{\cD Y}) \cdot \cU\cC(\psi) \colon \cU\cC X \to \cU\cC\cU\cD Y \to \cU\cD Y)$$
The proof of the isomorphism $\Theta_{\gamma,\beta} \simeq \Theta_{\beta}$ is dual to the previous case.
\end{proof}

\begin{Text}\label{TextCUD}{\rm
Putting together Remark \ref{RemTriv}.2 and Proposition \ref{PropCUD}.2, we get that, in the situation of Proposition \ref{PropCUD}, 
the unit $\gamma_B$ of $\cC \dashv \cU$ is an epimorphism for all $B \in \cB$ if and only if the counit $\beta_B$ of $\cU \dashv \cD$ 
is a monomorphism for all $B \in \cB.$ 
}\end{Text}

\begin{Remark}\label{RemSubcatPrepoint}{\rm
We can complete Proposition \ref{PropCUD} by observing that the three isomorphic structures of nullhomotopies $\Theta_{\beta}, \Theta_{\gamma}$
and $\Theta_{\gamma,\beta}$ are retracts, in $\Null(\cB),$ of the structure $\Theta_{\cA}$ of Example \ref{ExNullSubcat}. Moreover, the induced ideals 
of arrows $\cZ_1(\Theta_{\beta}), \cZ_1(\Theta_{\gamma}), \cZ_1(\Theta_{\gamma,\beta})$ and $\cZ_1(\Theta_{\cA})$ are equal. 
In particular, the equality $\cZ_1(\Theta_{\beta}) = \cZ_1(\Theta_{\gamma})$ means that Proposition \ref{PropCUD} is the non-discrete generalization of the annihilation 
property stated in Lemma 1.2(c) in \cite{GJ}.
}\end{Remark} 

\begin{proof}
Let us check that $\Theta_{\gamma}$ is a retract of $\Theta_{\cA}.$ For an arrow $g \colon X \to Y$ in $\cB,$ put
$$\cI_g \colon \Theta_{\gamma}(g) \to \Theta_{\cA}(g) \;,\; \cI_g(\varphi \colon \cU\cC X \to Y) = (\gamma_X, \cC X, \varphi)$$
$$\cI_g^* \colon \Theta_{\cA}(g) \to \Theta_{\gamma}(g) \;,\; \cI_g^*(g_1,A,g_2) = g_2 \cdot g_1'$$
where $g_1' \colon \cU\cC X \to \cU A$ is the unique arrow such that $g_1' \cdot \gamma_X = g_1.$ The condition $\cI_g^*(\cI_g(\varphi)) = \varphi$
is easy to check. \\
It remains to prove that $\cZ_1(\Theta_{\gamma}) = \cZ_1(\Theta_{\cA}).$
We have to show that, for any arrow $g$ in $\cB,$ one has that $\Theta_{\gamma}(g) \neq \emptyset$ if and only if $\Theta_{\cA}(g) \neq \emptyset \colon$
this is obvious because $\Theta_{\gamma}(g)$ is a retract of $\Theta_{\cA}(g).$
\end{proof}

The rest of this section is devoted to two examples. Both are special cases of the situation described in Proposition \ref{PropCUD}.

\begin{Example}\label{ExArr}{\rm
For a given category $\cA,$ we denote by $\Arr(\cA)$ the category whose objects are the arrows $x \colon X \to X_0$ of $\cA$ and whose arrows 
are pairs of arrows $(g,g_0)$ in $\cA$ such that the following diagram commutes
$$\xymatrix{X \ar[r]^{g} \ar[d]_{x} & Y \ar[d]^{y} \\
X_0 \ar[r]_{g_0} & Y_0}$$
We will use the notation $(g,g_0) \colon (X,x,X_0) \to (Y,y,Y_0).$ There are three functors:
\begin{enumerate}
\item[-] the domain functor $\cD \colon \Arr(\cA) \to \cA$ defined by 
$$\cD((g,g_0) \colon (X,x,X_0) \to (Y,y,Y_0)) = (g \colon X \to Y)$$
\item[-] the codomain functor $\cC \colon \Arr(\cA) \to \cA$ defined by 
$$\cC((g,g_0) \colon (X,x,X_0) \to (Y,y,Y_0)) = (g_0 \colon X_0 \to Y_0)$$
\item[-] the unit functor $\cU \colon \cA \to \Arr(\cA)$ defined by
$$\cU(g \colon X \to Y) = ((g,g) \colon (X,\id_X,X) \to (Y,\id_Y,Y))$$
\end{enumerate}
The functor $\cU$ is full and faithful. Moreover, these three functors form a string of adjunctions
$$\xymatrix{ \cA \ar[rr]|-{\cU} & & \Arr(\cA) \ar@<-1.5ex>[ll]_-{\cC} \ar@<1.5ex>[ll]^-{\cD} }
\;\;\;\;\; \cC \dashv \cU \dashv \cD$$
The unit $\gamma_{(X,x,X_0)} \colon (X,x,X_0) \to \cU\cC(X,x,X_0)$ of $\cC \dashv \cU$ is
$\xymatrix{X \ar[r]^x \ar[d]_x & X_0 \ar[d]^{\id} \\ X_0 \ar[r]_{\id} & X_0}$ \\
The counit $\beta_{(Y,y,Y_0)} \colon \cU\cD(Y,y,Y_0) \to (Y,y,Y_0)$ of $\cU \dashv \cD$ is
$\xymatrix{Y \ar[r]^{\id} \ar[d]_{\id} & Y \ar[d]^y \\ Y \ar[r]_y & Y_0}$ \\
The three isomorphic structures of nullhomotopies on $\Arr(\cA)$ induced by the adjunctions $\cC \dashv \cU \dashv \cD$ as in 
Proposition \ref{PropCUD} will be denoted by $\cH(\cA).$ Explicitly, for an arrow $(g,g_0) \colon (X,x,X_0) \to (Y,y,Y_0),$ we have:
$$\cH(\cA)(g,g_0) = \{ \lambda \colon X_0 \to Y \mid \lambda \cdot x = g \mbox{ and } y \cdot \lambda = g_0 \}$$ 
Indeed, the commutativity of both triangles in 
$$\xymatrix{X \ar[r]^{g} \ar[d]_{x} & Y \ar[d]^{y} \\
X_0 \ar[r]_{g_0} \ar[ru]^{\lambda}& Y_0}$$
is clearly equivalent to the equation $\beta_{(Y,y,Y_0)} \cdot \cU(\lambda) \cdot \gamma_{(X,x,X_0)} = (g,g_0)$ in $\Arr(\cA),$ that is
$$\xymatrix{X \ar[r]^x \ar[d]_x & X_0 \ar[d]_{\id} \ar[r]^{\lambda} & Y \ar[d]^{\id} \ar[r]^{\id} & Y \ar[d]^y & \ar@{}[d]|{=} & X \ar[d]_x \ar[r]^g & Y \ar[d]^y \\
X_0 \ar[r]_{\id} & X_0 \ar[r]_{\lambda} & Y \ar[r]_y & Y_0 & & X_0 \ar[r]_{g_0} & Y_0}$$
Finally, in the situation
$$\xymatrix{W \ar[r]^f \ar[d]_w & X \ar[r]^g \ar[d]_x & Y \ar[r]^h \ar[d]^y & Z \ar[d]^z \\
W_0 \ar[r]_{f_0} & X_0 \ar[r]_{g_0} \ar[ru]^{\lambda} & Y_0 \ar[r]_{h_0} & Z_0 }$$
we have $(h,h_0) \circ \lambda \circ (f,f_0) = h \cdot \lambda \cdot f_0.$
}\end{Example}

\begin{Example}\label{ExXMod}{\rm
The second main example is in fact a variant of the first one. We take as category $\cA$ the category $\Grp$ of groups and as category
$\cB$ the category $\XMod$ of crossed modules (see \cite{WCM}). We will denote an object in $\XMod$ by $(X,x,X_0,\ast),$ where $x \colon X \to X_0$
is the group morphism and $\ast \colon X_0 \times X \to X$ is the action. The string of adjunctions
$$\xymatrix{ \Grp \ar[rr]|-{\cU} & & \XMod \ar@<-1.5ex>[ll]_-{\cC} \ar@<1.5ex>[ll]^-{\cD} }
\;\;\;\;\; \cC \dashv \cU \dashv \cD$$
is essentially as in Example \ref{ExArr}. In short, $\cD(X,x,X_0,\ast)=X, \cC(X,x,X_0,\ast)=X_0,$ and $\cU(X) = (X,\id_X,X,\conj),$ 
where $\conj \colon X \times X \to X$ is the conjugation action of $X$ on itself. The unit $\gamma_{(X,x,X_0,\ast)}$ of $\cC \dashv \cU$ 
and the counit $\beta_{(X,x,X_0,\ast)}$ of $\cU \dashv \cD$ are as in Example \ref{ExArr}, just check that they are crossed module 
morphisms and that the universal properties restrict from $\Arr(\Grp)$ to $\XMod.$ It follows that the structure of nullhomotopies induced 
on $\XMod$ by the string of adjunctions $\cC \dashv \cU \dashv \cD$ is the same as in Example \ref{ExArr}.
}\end{Example} 

\section{Homotopy kernels}\label{SecHKHC}

The natural notion of ``higher dimensional limit'' in a category equipped with a structure of nullhomotopies is the one of (strong) 
homotopy kernel. As far as we know, the first place where (a variant of) this notion has been introduced is \cite{MM}.

\begin{Definition}\label{DefHKer}{\rm
Let $g \colon X \to Y$ be an arrow in a category with nullhomotopies $(\cB,\Theta).$
\begin{enumerate}
\item A {\it homotopy kernel} of $g$ with respect to $\Theta$ is a triple 
$$\cN(g) \in \cB, n_g \colon \cN(g) \to X, \nu_g \in \Theta(n_g \cdot g)$$
such that, for any other triple of the form 
$$W \in \cB, f \colon W \to X, \varphi \in \Theta(f \cdot g)$$
there exists a unique arrow $f' \colon W \to \cN(g)$ such that $n_g \cdot f' = f$ and $\nu_g \circ f' = \varphi$
$$\xymatrix{\cN(g) \ar[rd]_{n_g} \ar@{-->}@/^1pc/[rrd]^{} & \ar@{}[d]|{\Downarrow}_{\nu_g} \\
& X \ar[r]^{g} & Y \\
W \ar[uu]^{f'} \ar[ru]^{f} \ar@{-->}@/_1pc/[rru]_{} & \ar@{}[u]^{\varphi}|{\Uparrow} }$$
\item A homotopy kernel $(\cN(g),n_g,\nu_g)$ is {\it strong} if, for any triple of the form 
$$W \in \cB, f \colon W \to \cN(g), \varphi \in \Theta(n_g \cdot f)$$
such that $g \circ \varphi = \nu_g \circ f,$ there exists a unique nullhomotopy $\varphi' \in \Theta(f)$ such that $n_g \circ \varphi' = \varphi$
$$\xymatrix{W \ar[rr]^-{f} \ar@/_1.5pc/@{-->}[rr]_{}^{\varphi' \; \Uparrow} \ar@/^1.8pc/@{-->}[rrrr]^{}_{\varphi \; \Downarrow} & & 
\cN(g) \ar[rr]^-{n_g} \ar@/_1.5pc/@{-->}[rrrr]_{}^{\nu_g \; \Uparrow} & & X \ar[rr]^{g} & & Y  }$$
\end{enumerate}
}\end{Definition}

\begin{Text}\label{TextHCoker}{\rm
The notion of {\it (strong) homotopy cokernel} with respect to $\Theta$ is dual of the notion of (strong) homotopy kernel and it will be needed
later in this paper. For the homotopy cokernel of an arrow $g \colon X \to Y,$ we adopt the notation 
$$\xymatrix{X \ar[rr]_{g} \ar@/^1.8pc/@{-->}[rrrr]^{}_{\theta_g \; \Downarrow} & & Y \ar[rr]_-{q_g} & & \cQ(g) }$$
We will often write $\Theta$-kernel and $\Theta$-cokernel instead of homotopy kernel and homotopy cokernel with respect 
to the structure $\Theta.$ In this section (in fact, throughout all the paper with the only exception of Section \ref{SecHTT}), we develop the 
theory for $\Theta$-kernels, but everything can be obviously dualized to $\Theta$-cokernels.
}\end{Text} 

\begin{Text}\label{TextUniq}{\rm 
The homotopy kernel of an arrow is determined by its universal property uniquely up to a unique isomorphism. Moreover, if an arrow has 
two (necessarily isomorphic) homotopy kernels and one of them is strong, the other one also is strong. 
}\end{Text}

\begin{Remark}\label{RemCanc}{\rm
Homotopy kernels satisfy a cancellation property. 
\begin{enumerate}
\item In the situation depicted by the following diagram, if $n_g \cdot h = n_g \cdot k$ and $\nu_g \circ h = \nu_g \circ k,$ then $h=k.$
$$\xymatrix{Z \ar@<-0.5ex>[rr]_{k} \ar@<0.5ex>[rr]^{h} & & \cN(g) \ar[rr]_-{n_g} \ar@/^1.8pc/@{-->}[rrrr]^{}_{\nu_g \; \Downarrow} & & X \ar[rr]_{g}  & & Y}$$
\item When the structure of nullhomotopies is discrete, the condition $\nu_g \circ h = \nu_g \circ k$ follows from the condition $n_g \cdot h = n_g \cdot k,$ 
so that the cancellation property above reduces to the fact that $n_g \colon \cN(g) \to X$ is a monomorphism. 
\end{enumerate} 
}\end{Remark}

\begin{Text}\label{TextTrivOrth}{\rm
In the very general context of categories with nullhomotopies, homotopy kernels do not have strong classification 
properties as, for example, usual kernels have in abelian categories. Nevertheless, in Lemma \ref{LemmaTrivOrth} we list 
some simple facts which will be useful in the rest of this paper. We start with two points of terminology in a category with 
nullhomotopies $(\cB,\Theta) \colon$
\begin{enumerate}
\item[-] An object $X \in \cB$ is $\Theta$-{\it trivial} if $\Theta(\id_X) \neq \emptyset.$
\item[-] Given an ordered pair of objects $(T,F) \in \cB \times \cB,$ we say that $T$ is $\Theta$-{\it orthogonal} to $F,$ and write $T \perp F,$ 
if $\Theta(h) = \{ \ast \}$ for every arrow $h \colon T \to F.$
\end{enumerate}
Observe that
\begin{enumerate}
\item any retract of a $\Theta$-trivial object is $\Theta$-trivial,
\item if an arrow $g$ factorizes through a $\Theta$-trivial object, then $\Theta(g) \neq \emptyset,$
\item if $T$ is isomorphic to $T'$ and $F$ is isomorphic to $F'$ and $T \perp F,$ then $T' \perp F',$
\item if $X \perp X,$ then $X$ is $\Theta$-trivial.
\end{enumerate} 
}\end{Text}

\begin{proof}
2. Consider two arrows $a \colon A \to X$ and $b \colon X \to B$ with $X$ a $\Theta$-trivial object. If $\lambda \in \Theta(\id_X),$
then $b \circ \lambda \circ a \in \Theta(b \cdot a).$
\end{proof} 

\begin{Text}\label{TextTermTriv}{\rm
Observe that the terminology in Definition \ref{DefIdeal}.2 is coherent with the one in \ref{TextTrivOrth}: 
if $\cZ_1(\Theta)$ is the ideal associated with $\Theta,$ to be $\Theta$-trivial is the same as to be $\cZ_1(\Theta)$-trivial.
}\end{Text} 

\begin{Text}\label{TextCarTrivArr}{\rm
Observe also that, using the notion of closed ideal, we can partially invert the implication in \ref{TextTrivOrth}.2: if the ideal 
$\cZ_1(\Theta)$ is closed and if $\Theta(g) \neq \emptyset,$ then $g \in \cZ_1(\Theta)$ and therefore it factorizes through 
some $\Theta$-trivial object.
}\end{Text} 

\begin{Lemma}\label{LemmaTrivOrth}
Consider a homotopy kernel in a category with nullhomotopies $(\cB,\Theta) \colon$
$$\xymatrix{\cN(g) \ar[rr]_{n_g} \ar@/^1.8pc/@{-->}[rrrr]^{}_{\nu_g \; \Downarrow} & & X \ar[rr]_-{g} & & Y }$$
\begin{enumerate}
\item $\Theta(g) \neq \emptyset$ if and only if $n_g$ is a split epimorphism. If, moreover, $\cN(g) \perp Y,$ then $n_g$ is an isomorphism.
\item If $\cN(g)$ is a strong $\Theta$-kernel and if $g$ is an isomorphism, then $\cN(g)$ is $\Theta$-trivial.
\item If $X$ is $\Theta$-trivial and if $\cN(g) \perp Y,$ then $n_g$ is an isomorphism and $\cN(g)$ is $\Theta$-trivial.
\item If $Y$ is $\Theta$-trivial and if $\cN(g) \perp Y,$ then $n_g$ is an isomorphism.
\end{enumerate} 
\end{Lemma}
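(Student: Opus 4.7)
The plan is to treat the four statements as direct consequences of the universal property of the (strong) $\Theta$-kernel, together with the cancellation principle of Remark \ref{RemCanc}.1 and the elementary facts listed in \ref{TextTrivOrth}. In each part the tactic is the same: cook up a suitable test datum $(W,f,\varphi)$ for $(\cN(g),n_g,\nu_g)$, then read off the desired morphism or nullhomotopy from the factorisation it forces.

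For \textbf{(1)}, the forward direction is immediate: any $\varphi\in\Theta(g)$ yields the test triple $(X,\id_X,\varphi)$, whose factorisation produces a section $\iota\colon X\to\cN(g)$ of $n_g$ with $\nu_g\circ\iota=\varphi$. Conversely, any section $s$ of $n_g$ gives $\nu_g\circ s\in\Theta(g)$ by the pre-composition axiom. For the ``moreover'' clause I would prove $\iota\cdot n_g=\id_{\cN(g)}$ via the cancellation of Remark \ref{RemCanc}.1: on underlying arrows one has $n_g\cdot(\iota\cdot n_g)=n_g=n_g\cdot\id_{\cN(g)}$, while the two nullhomotopies $\nu_g\circ(\iota\cdot n_g)$ and $\nu_g\circ\id_{\cN(g)}=\nu_g$ both live in $\Theta(g\cdot n_g)$, which is a singleton thanks to $\cN(g)\perp Y$.

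For \textbf{(2)}, I would apply the \emph{strong} kernel property to $f=\id_{\cN(g)}$ and $\varphi:=g^{-1}\circ\nu_g\in\Theta(n_g)$. The required compatibility $g\circ\varphi=\nu_g\circ\id_{\cN(g)}$ collapses by the axioms of Definition \ref{DefNullHom} to $\id_Y\circ\nu_g=\nu_g$, so the strong kernel delivers $\varphi'\in\Theta(\id_{\cN(g)})$, witnessing the $\Theta$-triviality of $\cN(g)$.

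For \textbf{(3)} and \textbf{(4)}, the common observation is that each hypothesis produces $\Theta(g)\neq\emptyset$: a nullhomotopy $\alpha\in\Theta(\id_X)$ gives $g\circ\alpha\in\Theta(g)$, and a nullhomotopy $\beta\in\Theta(\id_Y)$ gives $\beta\circ g\in\Theta(g)$. In either case, part (1) together with $\cN(g)\perp Y$ forces $n_g$ to be an isomorphism. In (3), the $\Theta$-triviality of $\cN(g)$ then follows from that of $X$ via \ref{TextTrivOrth}.1, since the iso $n_g$ exhibits $\cN(g)$ as a retract of $X$. The only subtle point anywhere is the one in (1): the cancellation in Remark \ref{RemCanc}.1 requires matching \emph{both} underlying arrows \emph{and} nullhomotopies, and $\cN(g)\perp Y$ is exactly what guarantees the latter; analogously, in (2) the strength of the kernel is essential, since a nullhomotopy on $n_g$ does not by itself lift to one on $\id_{\cN(g)}$.
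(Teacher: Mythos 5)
Your proof is correct and follows essentially the same route as the paper's: the same test triples for the universal property in part (1), the cancellation of Remark \ref{RemCanc}.1 combined with the orthogonality $\cN(g)\perp Y$ for the ``moreover'' clause, the same use of the strong kernel applied to $g^{-1}\circ\nu_g$ in part (2), and the same reduction of parts (3) and (4) to part (1) via $g\circ\alpha$ and $\beta\circ g$. No gaps.
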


\begin{proof}
1. If $\lambda \in \Theta(g) = \Theta(\id_X \cdot g),$ then, by the universal property of the $\Theta$-kernel, there exists a unique 
$a \colon X \to \cN(g)$ such that $n_g \cdot a = \id_X$ and $\nu_g \circ a = \lambda.$ The first condition on $a$ already gives that $n_g$ 
is a split epimorphism. Moreover, $n_g \cdot a \cdot n_g = \id_X \cdot n_g = n_g \cdot \id_{\cN(g)}.$ If $\cN(g) \perp Y,$ we also have
$\nu_g \circ a \cdot n_g = \nu_g \circ \id_{\cN(g)},$ because both nullhomotopies are in $\Theta(g \cdot n_g)$ which is a singleton. 
By Remark \ref{RemCanc}, we get $a \cdot n_g = \id_{\cN(g)}$ and we are done. Conversely, if there exists an arrow $i \colon X \to \cN(g)$ 
such that $n_g \cdot i = \id_X,$ then $\nu_g \circ i \in \Theta(g \cdot n_g \cdot i) = \Theta(g \cdot \id_X) = \Theta(g).$ \\
2. If $g$ is an isomorphism, then $g^{-1} \circ \nu_g \in \Theta(g^{-1} \cdot g \cdot n_g) = \Theta(n_g \cdot \id_{\cN(g)}).$ Moreover,
$g \circ (g^{-1} \circ \nu_g) = \nu_g \circ \id_{\cN(g)}.$ Since $\cN(g)$ is a strong $\Theta$-kernel, we get a unique $\lambda \in \Theta(\id_{\cN(g)})$
such that $n_g \circ \lambda = g^{-1} \circ \nu_g.$ Therefore, $\Theta(\id_{\cN(g)}) \neq \emptyset.$ \\
3. If there is a nullhomotopy $\lambda \in \Theta(\id_X),$ then $g \circ \lambda \in \Theta(g)$ and, by point 1, $n_g$
is an isomorphism. This implies that $\cN(g)$ is $\Theta$-trivial because $X$ is $\Theta$-trivial. \\
4. If there is a nullhomotopy $\lambda \in \Theta(\id_Y),$ then $\lambda \circ g \in \Theta(g)$ and we can apply point 1.
\end{proof} 

\begin{Corollary}\label{CorDiscrClStrong}
Let $(\cB,\Theta)$ be a category with a structure of nullhomotopies. Assume that $\Theta$ is discrete 
and that the ideal $\cZ_1(\Theta)$ is closed.
\begin{enumerate}
\item If an isomorphism has a $\Theta$-kernel, then the object part of the $\Theta$-kernel is $\Theta$-trivial. 
\item If $\Theta$-kernels exist in $\cB,$ then they are strong. 
\end{enumerate}
\end{Corollary}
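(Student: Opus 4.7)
The plan is to deduce part~1 from part~2 via Lemma~\ref{LemmaTrivOrth}.2: once we know that every $\Theta$-kernel is strong, the $\Theta$-kernel of an isomorphism is automatically $\Theta$-trivial on objects by that lemma. So all the real work is in part~2, and the argument for part~2 uses only the single $\Theta$-kernel of the given arrow (not the existence of $\Theta$-kernels of other arrows), which is exactly what part~1 assumes.

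For part~2, I first translate what strongness means under the hypotheses. Discreteness makes each $\Theta(-)$ either empty or a singleton, so $\Theta(g) \neq \emptyset$ is equivalent to $g \in \cZ_1(\Theta)$, and closedness further equates this with $g$ factoring through a $\Theta$-trivial object (\ref{TextCarTrivArr}). In Definition~\ref{DefHKer}.2, both the compatibility $g \circ \varphi = \nu_g \circ f$ that appears as a hypothesis and the conclusion $n_g \circ \varphi' = \varphi$ become automatic, since the relevant sets $\Theta(g \cdot n_g \cdot f)$ and $\Theta(n_g \cdot f)$ are singletons as soon as they are non-empty. Strongness therefore reduces to a purely ideal-theoretic statement: whenever $n_g \cdot f \in \cZ_1(\Theta)$, also $f \in \cZ_1(\Theta)$.

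The key maneuver is to feed the factorisation produced by closedness back into the universal property of the kernel. Given $f \colon W \to \cN(g)$ with $n_g \cdot f \in \cZ_1(\Theta)$, closedness provides $n_g \cdot f = m \cdot k$ through some $\Theta$-trivial object $T$, with $k \colon W \to T$ and $m \colon T \to X$. Since $T$ is $\Theta$-trivial, both $m$ (via the trivial factorisation $m \cdot \id_T$) and hence $m \cdot g$ lie in $\cZ_1(\Theta)$, so $\Theta(m \cdot g)$ is a singleton $\{\mu\}$. Applying the universal property of the $\Theta$-kernel to the pair $(m,\mu)$ yields a unique $u \colon T \to \cN(g)$ with $n_g \cdot u = m$ and $\nu_g \circ u = \mu$. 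The arrows $u \cdot k$ and $f$ both satisfy $n_g \cdot (-) = n_g \cdot f$, while the nullhomotopies $\nu_g \circ (u \cdot k)$ and $\nu_g \circ f$ necessarily agree since they both inhabit the singleton $\Theta(g \cdot n_g \cdot f)$. The cancellation clause of Remark~\ref{RemCanc} then forces $u \cdot k = f$, exhibiting $f$ as a factorisation through the $\Theta$-trivial object $T$, hence $f \in \cZ_1(\Theta)$ as required.

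The main obstacle is conceptual rather than technical: one has to spot that the universal property of $\cN(g)$ should be used not at the original object $W$, but at the intermediate $\Theta$-trivial object $T$ extracted from closedness of the ideal. Everything else is bookkeeping driven by the automatic character of uniqueness under discreteness, together with the cancellation property of Remark~\ref{RemCanc}.
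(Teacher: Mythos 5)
Your proof is correct, but it runs in the opposite direction to the paper's and uses a different key maneuver, so it is worth comparing the two. The paper proves part~1 first and directly: since $g$ is an isomorphism, $n_g = g^{-1}\cdot g\cdot n_g$ itself lies in $\cZ_1(\Theta)$, so closedness factors $n_g$ (not $n_g\cdot f$) through a $\Theta$-trivial object $Z$, and the universal property of $\cN(g)$ applied at $Z$ exhibits $\cN(g)$ as a retract of $Z$, hence $\Theta$-trivial by \ref{TextTrivOrth}.1. The paper then deduces part~2 from part~1 by routing through the $\Theta$-kernel $\cN(\id_X)$ of the identity: it factors the given $f$ as $h'\cdot f'$ through $\cN(\id_X)$, which is $\Theta$-trivial by part~1, so $f\in\cZ_1(\Theta)$. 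You instead prove strongness of each individual $\Theta$-kernel in isolation --- applying closedness to $n_g\cdot f$ and the universal property at the resulting trivial object $T$ --- and then recover part~1 from part~2 via Lemma \ref{LemmaTrivOrth}.2. Your reduction of strongness to the implication $n_g\cdot f\in\cZ_1(\Theta)\Rightarrow f\in\cZ_1(\Theta)$ matches the paper's, and your cancellation step is sound (indeed, under discreteness you could shortcut it via Remark \ref{RemCanc}.2, since $n_g$ is then a monomorphism). What your route buys is locality: part~2 holds for any single $\Theta$-kernel that happens to exist, without assuming kernels of identities exist, whereas the paper's part~2 genuinely needs $\cN(\id_X)$. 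What the paper's route buys is that part~1 is self-contained and does not depend on first establishing strongness. The only blemishes in your write-up are notational: with the paper's convention for composition, the composite of $m\colon T\to X$ followed by $g\colon X\to Y$ should be written $g\cdot m$, not $m\cdot g$ (and similarly $\Theta(g\cdot m)$).
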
 

\begin{proof}
1. Consider a $\Theta$-kernel of an isomorphism $g$
$$\xymatrix{\cN(g) \ar[r]^-{n_g} & X \ar[r]^-{g} & Y }$$
Since $g \cdot n_g \in \cZ_1(\Theta),$ then $n_g = g^{-1} \cdot g \cdot n_g \in \cZ_1(\Theta).$ Since $\cZ_1(\Theta)$ is closed, there exists a 
factorization $n_g = b \cdot a \colon \cN(g) \to Z \to X,$ where $Z$ is a $\Theta$-trivial object. This means that $\id_Z \in \cZ_1(\Theta).$ 
and then $g \cdot b = g \cdot b \cdot \id_Z \in \cZ_1(\Theta).$  By the universal property of the $\Theta$-kernel, there exists a unique 
arrow $b' \colon Z \to \cN(g)$ such that $n_g \cdot b' = b.$ Therefore, $n_g \cdot b' \cdot a = b \cdot a = n_g.$ This implies that 
$b' \cdot a = \id_{\cN(g)},$ because $n_g$ is a monomorphism (see Remark \ref{RemCanc}.2). We have proved that $\cN(g)$ 
is a retract of $Z$ and, by \ref{TextTrivOrth}.1, we can conclude that $\cN(g)$ is $\Theta$-trivial. \\
2. Consider the following diagram
$$\xymatrix{W \ar[r]^-{f} & \cN(h) \ar[r]^-{n_h} & X \ar[r]^-{h} & Y }$$
Since $\Theta$ is discrete, to prove that the $\Theta$-kernel is strong amounts to proving the following implication: 
if $n_h \cdot f \in \cZ_1(\Theta)$ then $f \in \cZ_1(\Theta).$ Let $n_X \colon \cN(\id_X) \to X$ be the $\Theta$-kernel of the identity arrow on $X.$
Since $n_X = \id_X \cdot n_X \in \cZ_1(\Theta),$ also $h \cdot n_X \in \cZ_1(\Theta).$ By the universal property of $\cN(h),$ we get a unique
arrow $h' \colon \cN(\id_X) \to \cN(h)$ such that $n_h \cdot h' = n_X.$ On the other hand, $\id_X \cdot n_h \cdot f = n_h \cdot f \in \cZ_1(\Theta),$
so that, by the universal property of $\cN(\id_X),$ there exists a unique arrow $f' \colon W \to \cN(\id_X)$ such that $n_X \cdot f' = n_h \cdot f.$
Therefore, $n_h \cdot h' \cdot f' = n_X \cdot f' = n_h \cdot f.$ This implies that $h' \cdot f' = f$ because $n_h$ is a monomorphism
(see Remark \ref{RemCanc}.2). Finally, $f = h' \cdot f' = h' \cdot \id_{\cN(\id_X)} \cdot f' \in \cZ_1(\Theta)$ because, by the previous point,
the $\Theta$-kernel $\cN(\id_X)$ is $\Theta$-trivial, that is, $\id_{\cN(\id_X)} \in \cZ_1(\Theta).$
\end{proof}

\begin{Remark}\label{RemTruvOrth}{\rm 
In the situation of Lemma \ref{LemmaTrivOrth}, if the structure $\Theta$ is discrete, then the first point can be improved 
using Remark \ref{RemCanc}.2 and gives that $\Theta(g) \neq \emptyset$ if and only if $n_g$ is an isomorphism. 
This result appears also as Lemma 2.4 in \cite{FFG}, where it is expressed using kernels relative to an ideal of arrows 
(see Definition \ref{DefZK}).
}\end{Remark}  

\section{Existence of homotopy kernels}\label{SecExistHK}

The main result of this section is to establish a sufficient condition for the existence of homotopy kernels 
in a category equipped with a structure of nullhomotopies (Proposition \ref{PropExistenceHKHC}). This result is then specialized to 
pre-pointed categories and, in particular, to categories of arrows. We follow the same lines as done in \cite{JMMVFibr} for homotopy 
pullbacks and refine some results from \cite{GJ}. We recall from \cite{EVhcc} an auxiliary definition about nullhomotopies and (categorical) 
pullbacks.

\begin{Notation}\label{NotPBPO}{\rm 
The factorizations of a commutative square $x \cdot f = y \cdot g$ through the pullback will be written as
$$\xymatrix{A \ar@/^1.5pc/[rrrrd]^{g} \ar[rrd]^{\langle f,g \rangle} \ar@/_2.0pc/[rrdd]_{f} \\
& & B\times_{x,y}C \ar[rr]_-{x'} \ar[d]^{y'} & & C \ar[d]^{y} \\
& & B \ar[rr]_{x} & & D}$$
}\end{Notation}

\begin{Definition}\label{DefStrongPBPO}{\rm 
(Using notation \ref{NotPBPO}.) Let $(\cB,\Theta)$ be a category with nullhomotopies.
A pullback $B \times_{x,y} C$ in $\cB$ is {\it strong} with respect to the structure $\Theta$ (or $\Theta$-strong) if, given two nullhomotopies 
$\varphi \in \Theta(f)$ and $\psi \in \Theta(g)$ such that $x \circ \varphi = y \circ \psi,$ there exists a unique nullhomotopy 
$\langle \varphi,\psi \rangle \in \Theta(\langle f,g \rangle)$ such that $y' \circ \langle \varphi,\psi \rangle = \varphi$ and $x' \circ \langle \varphi,\psi \rangle = \psi.$
}\end{Definition}

\begin{Proposition}\label{PropExistenceHKHC}
Let $(\cB,\Theta)$ be a category with nullhomotopies.
 If $\cB$ has (strong) $\Theta$-kernels of identity arrows and ($\Theta$-strong) pullbacks, then $\cB$ has all $\Theta$-kernels
(and they are strong).
\end{Proposition}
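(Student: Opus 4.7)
The plan is to build the $\Theta$-kernel of an arbitrary arrow $g \colon X \to Y$ by combining the $\Theta$-kernel of $\id_Y$ with a pullback along $g$. Concretely, I would take the $\Theta$-kernel $n_Y \colon \cN(\id_Y) \to Y$ with canonical nullhomotopy $\nu_Y \in \Theta(n_Y) = \Theta(\id_Y \cdot n_Y)$, and then form the pullback
$$\xymatrix{\cN(g) \ar[rr]^{p_2} \ar[d]_{p_1} & & \cN(\id_Y) \ar[d]^{n_Y} \\ X \ar[rr]_{g} & & Y}$$
Setting $n_g := p_1$ and $\nu_g := \nu_Y \circ p_2 \in \Theta(n_Y \cdot p_2) = \Theta(g \cdot p_1) = \Theta(g \cdot n_g)$ should give a candidate homotopy kernel. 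This mimics the classical construction of a kernel as a pullback of the zero map, and is the same recipe used in \cite{JMMVFibr} for homotopy pullbacks.

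For the universal property, given a test triple $(W,f \colon W \to X, \varphi \in \Theta(g \cdot f))$, the idea is to first factor through $\cN(\id_Y)$ and then through the pullback. Since $\varphi \in \Theta(\id_Y \cdot (g \cdot f))$, the universal property of $\cN(\id_Y)$ produces a unique $f'' \colon W \to \cN(\id_Y)$ with $n_Y \cdot f'' = g \cdot f$ and $\nu_Y \circ f'' = \varphi$. The square then factors uniquely through the pullback, giving $f' \colon W \to \cN(g)$ with $p_1 \cdot f' = f$ and $p_2 \cdot f' = f''$; the verification $\nu_g \circ f' = \nu_Y \circ (p_2 \cdot f') = \nu_Y \circ f'' = \varphi$ is then immediate, and uniqueness of $f'$ follows by composing the uniqueness clauses of the two universal properties.

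For the strong case, assume in addition that $\cN(\id_Y)$ is a strong $\Theta$-kernel and the pullback is $\Theta$-strong. Given $(W, f \colon W \to \cN(g), \varphi \in \Theta(n_g \cdot f))$ with $g \circ \varphi = \nu_g \circ f$, I would first apply the strong property of $\cN(\id_Y)$ to $(W, p_2 \cdot f, g \circ \varphi)$: the compatibility $\id_Y \circ (g \circ \varphi) = g \circ \varphi = \nu_Y \circ (p_2 \cdot f) = \nu_g \circ f$ is exactly our hypothesis, so we obtain a unique $\varphi'' \in \Theta(p_2 \cdot f)$ with $n_Y \circ \varphi'' = g \circ \varphi$. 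The pair $(\varphi, \varphi'')$ is now compatible with the commutative square defining the pullback, and, since $f$ itself is the comparison arrow $\langle p_1 \cdot f, p_2 \cdot f \rangle$, the $\Theta$-strong property of the pullback delivers a unique $\varphi' \in \Theta(f)$ with $p_1 \circ \varphi' = \varphi$ and $p_2 \circ \varphi' = \varphi''$. The first equation is exactly $n_g \circ \varphi' = \varphi$.

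The only real subtlety I foresee is keeping track of which universal property produces which uniqueness clause in the strong case: one must be careful that uniqueness of $\varphi'$ against $n_g \circ \varphi' = \varphi$ really does follow from the $\Theta$-strong pullback property alone, using the fact that any competitor automatically matches both projections (the $p_2$-component being forced by the strong property of $\cN(\id_Y)$ applied to $p_2 \circ \varphi'$). Apart from this bookkeeping, the argument is a diagram chase that never uses anything beyond the two hypotheses in the statement.
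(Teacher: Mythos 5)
Your construction is exactly the one in the paper: pull back the $\Theta$-kernel of $\id_Y$ along $g$, set $\nu_g = \nu_Y \circ p_2$, and in each universal-property check factor first through $\cN(\id_Y)$ and then through the (respectively $\Theta$-strong) pullback, with uniqueness of the competitor's second component forced by the (strong) kernel of $\id_Y$. The argument is correct and coincides with the paper's proof in all essentials.
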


\begin{proof}
Consider the following diagrams, the one on the left being a $\Theta$-kernel (where we write $n$ instead of $n_{\id}$ and $\nu$ 
instead of $\nu_{\id}$) and the one on the right being a pullback
$$\xymatrix{\cN(\id) \ar[rr]_-{n} \ar@/^1.8pc/@{-->}[rrrr]^{}_{\nu \; \Downarrow} & & Y \ar[rr]_-{\id} & & Y }
\;\;\;\;\;\;
\xymatrix{\cN(\id) \times_{n,g}X \ar[rr]^-{n'} \ar[d]_{g'} & & X \ar[d]^{g} \\ \cN(\id) \ar[rr]_-{n} & & Y }$$
Observe that $\nu \circ g' \in \Theta(n \cdot g') = \Theta(g \cdot n').$ We are going to prove that the $\Theta$-kernel of $g$ is
$$\xymatrix{\cN(\id)\times_{n,g}X \ar[rr]_-{n'} \ar@/^1.8pc/@{-->}[rrrr]^{}_{\nu \circ g' \; \Downarrow} & & X \ar[rr]_-{g} & & Y }$$
For this, consider the following situation
$$\xymatrix{W \ar[rr]_-{f} \ar@/^1.8pc/@{-->}[rrrr]^{}_{\varphi \; \Downarrow} & & X \ar[rr]_-{g} & & Y }$$
Since $\varphi \in \Theta(g \cdot f) = \Theta(\id_Y \cdot g \cdot f),$ the universal property of the $\Theta$-kernel of $\id_Y$ produces a unique 
arrow $\bar{f} \colon W \to \cN(\id)$ such that $n \cdot \bar{f} = g \cdot f$ and $\nu \circ \bar{f} = \varphi.$ Because of the first
condition on $\bar{f},$ we can apply the universal property of the pullback and we get a unique arrow $f' \colon W \to \cN(\id)\times_{n,g}X$ such that
$g' \cdot f' = \bar{f}$ and $n' \cdot f' = f.$ It follows that $\nu \circ g' \cdot f' = \nu \circ \bar{f} = \varphi,$ so that $f'$ is a factorization
of $(f,\varphi)$ through $(n',\nu \circ g').$ In order to prove the uniqueness of such a factorization, consider an arrow 
$h \colon W \to \cN(\id)\times_{n,g}X$ such that $n' \cdot h = f$ and $\nu \circ g' \cdot h = \varphi.$ To check that $h = f'$ it is enough 
to check that $g' \cdot h = \bar{f}.$ This is true because $n \cdot g' \cdot h = g \cdot n' = g \cdot f.$ \\
Now we move on to the strong case. Consider the following situation
$$\xymatrix{W \ar[rr]^-{f} \ar@/^1.8pc/@{-->}[rrrr]^{}_{\varphi \; \Downarrow} & & 
\cN(\id)\times_{n,g}X \ar[rr]^-{n'} \ar@/_1.5pc/@{-->}[rrrr]_{}^{\nu \circ g' \; \Uparrow} & & X \ar[rr]^{g} & & Y  }$$
where the nullhomotopy $\varphi$ is such that $g \circ \varphi = \nu \circ g' \cdot f.$ Observe that $g \circ \varphi \in \Theta(g \cdot n' \cdot f) 
= \Theta(n \cdot g' \cdot f)$ and $\id_Y \cdot g \circ \varphi = \nu \circ g' \cdot f.$ Therefore, since the $\Theta$-kernel of $\id_Y$ is strong,
there exists a unique nullhomotopy $\bar{\varphi} \in \Theta(f \cdot g')$ such that $n \circ \bar{\varphi} = g \circ \varphi.$ Since the pullback
$\cN(\id)\times_{n,g}X$ is $\Theta$-strong, we get a unique nullhomotopy $\varphi' \in \Theta(f)$ such that $g' \circ \varphi' = \bar{\varphi}$ and
$n' \circ \varphi' = \varphi.$ As far as the uniqueness of the factorization $\varphi'$ is concerned, let $\psi \in \Theta(f)$ be a nullhomotopy 
such that $n' \circ \psi = \varphi.$ To check that $\psi = \varphi',$ it is enough to check that $g' \circ \psi = \bar{\varphi}.$ This is true 
because $n \cdot g' \circ \psi = g \cdot n' \circ \psi = g \circ \varphi.$
\end{proof}

\begin{Lemma}\label{LemmaUnitHCoker}
Let $\beta \colon \cR \Rightarrow \cId \colon \cB \to \cB$ be a pre-radical in a category $\cB$ and $\Theta_{\beta}$ the associated structure 
of nullhomotopies as in \ref{TextRad}.1. For any object $X \in \cB,$ the following diagram is a $\Theta_{\beta}$-kernel of $\id_X$
$$\xymatrix{\cR X \ar[rr]_{\beta_X} \ar@/^1.8pc/@{-->}[rrrr]^{}_{\id_{\cR X} \; \Downarrow} & & X \ar[rr]_-{\id_X} & & X }$$
Moreover, if $\cR$ is an idempotent comonad and $\beta$ is its counit, then the $\Theta_{\beta}$-kernel described above is strong.
\end{Lemma}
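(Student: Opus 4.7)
The plan is to unwind the definitions of $\Theta_\beta$ and its composition rule from \ref{TextRad}.1, then verify the two universal properties by direct computation. Recall that $\Theta_\beta(g) = \{\psi \colon X \to \cR Y \mid \beta_Y \cdot \psi = g\}$ and $h \circ \psi \circ f = \cR(h) \cdot \psi \cdot f$. First, $\id_{\cR X} \in \Theta_\beta(\beta_X)$ trivially, since $\beta_X \cdot \id_{\cR X} = \beta_X$; so the triple $(\cR X, \beta_X, \id_{\cR X})$ is at least a legitimate candidate.

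For the universal property, take a test datum $(W, f \colon W \to X, \varphi \in \Theta_\beta(f))$. By definition $\varphi$ is already an arrow $W \to \cR X$ with $\beta_X \cdot \varphi = f$, and I claim $f' := \varphi$ is the required factorization: $\beta_X \cdot f' = f$ is automatic, while $\id_{\cR X} \circ f'$ unfolds via the composition rule to $\cR(\id_X) \cdot \id_{\cR X} \cdot f' = f' = \varphi$. Uniqueness is immediate since the equation $\nu_g \circ f' = \varphi$ reads literally as $f' = \varphi$.

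For the strong case, assume $\cR$ is an idempotent comonad with counit $\beta$ and comultiplication $\delta \colon \cR \Rightarrow \cR\cR$, so that the counit laws yield $\beta_{\cR X} \cdot \delta_X = \id_{\cR X} = \cR(\beta_X) \cdot \delta_X$ and idempotency makes $\delta_X$ invertible, whence both $\beta_{\cR X}$ and $\cR(\beta_X)$ equal $\delta_X^{-1}$. Given $(W, f \colon W \to \cR X, \varphi \in \Theta_\beta(\beta_X \cdot f))$ satisfying $\id_X \circ \varphi = \id_{\cR X} \circ f$, unwinding both sides collapses the condition to $\varphi = f$ as arrows $W \to \cR X$. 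Produce the lift $\varphi' := \delta_X \cdot f$: the first counit law gives $\beta_{\cR X} \cdot \varphi' = f$, hence $\varphi' \in \Theta_\beta(f)$; the second gives $\beta_X \circ \varphi' = \cR(\beta_X) \cdot \delta_X \cdot f = f = \varphi$. Uniqueness follows because any $\psi \in \Theta_\beta(f)$ must satisfy $\beta_{\cR X} \cdot \psi = f$, which, by invertibility of $\beta_{\cR X}$, forces $\psi = \delta_X \cdot f = \varphi'$.

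The only real subtlety, and the main obstacle if one is careless, is bookkeeping for the two different instances of $\circ$ in the strongness condition $g \circ \varphi = \nu_g \circ f$: on the left, $\id_X$ acts on $\varphi$ by post-composition (applying $\cR(\id_X)$), while on the right, $f$ acts on $\id_{\cR X}$ by pre-composition. Once this is untangled, the three comonad facts — $\beta_{\cR X} \cdot \delta_X = \id$, $\cR(\beta_X) \cdot \delta_X = \id$, and invertibility of $\beta_{\cR X}$ — deliver, in order, well-definedness of $\varphi'$, the identity $n_g \circ \varphi' = \varphi$, and uniqueness.
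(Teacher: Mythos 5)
Your proof is correct and follows essentially the same route as the paper's: the factorization in the universal property is the nullhomotopy itself (forced by $\nu_{\id_X}\circ f'=f'$), and in the strong case the lift is $\sigma_X\cdot f$ (your $\delta_X\cdot f$), with uniqueness coming from the invertibility of $\beta_{\cR X}$ granted by idempotency. The only difference is notational (you write $\delta$ for the comultiplication where the paper uses $\sigma$).
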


\begin{proof}
Explicitly, we assert that the $\Theta_{\beta}$-kernel of $\id_X$ is
$$\xymatrix{ & \cR X \ar[rd]^{\beta_X} \\ \cR X \ar[ru]^{\id} \ar[r]_{\beta_X} & X \ar[r]_{\id} & X }$$
Indeed, a nullhomotopy
$$\xymatrix{W \ar[rr]_{f} \ar@/^1.8pc/@{-->}[rrrr]^{}_{\psi \; \Downarrow} & & X \ar[rr]_-{\id_X} & & X }$$
amounts to an arrow $\psi \colon W \to \cR X$ such that $\beta_X \cdot \psi = f.$ As factorization $f' \colon W \to \cR X$ of $(f,\psi)$
through the $\Theta_{\beta}$-kernel we can take $\psi$ itself. This is the unique possible choice because the condition $\nu_{\id_X} \circ f' = \psi$ 
precisely means $f' = \psi.$ \\
Assume now that $\cR$ is an idempotent comonad and $\beta$ is its counit, and consider a nullhomotopy $\psi$ compatible with the 
nullhomotopy $\id_{\cR X}$ as in the following diagram
$$\xymatrix{W \ar[rr]^-{f} \ar@/_1.8pc/@{-->}[rrrr]^{\psi \; \Uparrow} & & \cR X \ar[rr]_{\beta_X} \ar@/^1.8pc/@{-->}[rrrr]^{}_{\id_{\cR X} \; \Downarrow} 
& & X \ar[rr]_-{\id_X} & & X }$$
Therefore, the arrow $\psi \colon W \to \cR X$ is such that $\beta_X \cdot \psi = \beta_X \cdot f$ and $\cR(\id_X) \cdot \psi = \id_{\cR X} \cdot f.$
We get $\psi = f$ and we have to find a unique arrow $\psi' \colon W \to \cR\cR X$ such that $\beta_{\cR X} \cdot \psi' = f$ and $\cR(\beta_X) \cdot \psi' = \psi.$
If we put $\psi' = \sigma_X \cdot f,$ where $\sigma \colon \cR \Rightarrow \cR\cR$ is the comultiplication of the comonad, both conditions are satisfied.
Indeed, $\beta_{\cR X} \cdot \sigma_X \cdot f = \id_{\cR X} \cdot f = f$ and $\cR(\beta_X) \cdot \sigma_X \cdot g = \id_{\cR X} \cdot f = f = \psi.$
It remains to prove the uniqueness of $\psi'.$ For this, assume that $\beta_{\cR X} \cdot \psi' = f.$ Since the comonad is idempotent, the 
comultiplication $\sigma_X$ is an isomorphism and then $\beta_{\cR X},$ being a one-side inverse of $\sigma_X,$ is also an isomorphism. 
Finally, from $\beta_{\cR X} \cdot \psi' = f$ we get $\psi' = \beta_{\cR X}^{-1} \cdot f = \sigma_X \cdot f.$
\end{proof}

\begin{Lemma}\label{LemmaStrongPBPO}
Let $\gamma \colon \cId \Rightarrow \cS \colon \cB \to \cB$ be a pre-coradical in a category $\cB$ and $\Theta_{\gamma}$ 
the associated structure of nullhomotopies as in \ref{TextRad}.2. If $\cB$ has pullbacks, they are $\Theta_{\gamma}$-strong.
\end{Lemma}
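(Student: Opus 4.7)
The plan is to unwind the definition of $\Theta_\gamma$ in Definition \ref{DefStrongPBPO} and reduce $\Theta_\gamma$-strongness directly to the ordinary universal property of the pullback.

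Fix a pullback square as in Notation \ref{NotPBPO}, with arrows $f \colon A \to B$, $g \colon A \to C$, $x \colon B \to D$, $y \colon C \to D$, and $x \cdot f = y \cdot g$. A nullhomotopy $\varphi \in \Theta_\gamma(f)$ is exactly an arrow $\varphi \colon \cS A \to B$ with $\varphi \cdot \gamma_A = f$, and similarly $\psi \in \Theta_\gamma(g)$ is $\psi \colon \cS A \to C$ with $\psi \cdot \gamma_A = g$. Since left composition with $x, y$ in $\Theta_\gamma$ is just ordinary composition, the compatibility condition $x \circ \varphi = y \circ \psi$ becomes the equality $x \cdot \varphi = y \cdot \psi$ of arrows $\cS A \to D$.

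First I would apply the universal property of the pullback to the pair $(\varphi,\psi)$, obtaining a unique arrow $\chi \colon \cS A \to B \times_{x,y} C$ such that $y' \cdot \chi = \varphi$ and $x' \cdot \chi = \psi$. Then I would check that $\chi$ lives in $\Theta_\gamma(\langle f, g \rangle)$, i.e.\ that $\chi \cdot \gamma_A = \langle f,g \rangle$. For this, I compose both sides with the two pullback projections:
\[ y' \cdot \chi \cdot \gamma_A = \varphi \cdot \gamma_A = f = y' \cdot \langle f, g \rangle, \quad x' \cdot \chi \cdot \gamma_A = \psi \cdot \gamma_A = g = x' \cdot \langle f, g \rangle, \]
so the uniqueness part of the pullback's universal property forces $\chi \cdot \gamma_A = \langle f, g \rangle$. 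By construction $y' \circ \chi = \varphi$ and $x' \circ \chi = \psi$, which gives the required factorization in $\Theta_\gamma$.

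For uniqueness, any $\chi' \in \Theta_\gamma(\langle f, g \rangle)$ with $y' \cdot \chi' = \varphi$ and $x' \cdot \chi' = \psi$ is in particular an arrow $\cS A \to B \times_{x,y} C$ fitting the pullback factorization for $(\varphi,\psi)$, and hence equals $\chi$. There is really no obstacle here beyond carefully translating between the data of $\Theta_\gamma$ and plain arrows out of $\cS A$; the only point that could mislead is remembering that the compatibility condition in $\Theta_\gamma$-strongness is automatically an equality of ordinary arrows (because left composition is strict), so the ordinary pullback universal property applies directly.
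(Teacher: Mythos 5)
Your proof is correct and follows essentially the same route as the paper: translate the nullhomotopies into arrows out of $\cS A$, use the ordinary universal property of the pullback to build $\langle \varphi,\psi \rangle$, and verify it is a nullhomotopy on $\langle f,g\rangle$ by postcomposing $\chi \cdot \gamma_A$ with the projections. The only cosmetic difference is that the paper parametrizes the data by an arrow $h$ into the pullback (so that $\langle f,g\rangle = h$), which is equivalent to your formulation.
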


\begin{proof}
Consider a pullback in $\cB$
$$\xymatrix{B\times_{x,y}C \ar[rr]^-{x'} \ar[d]_{y'} & & C \ar[d]^{y} \\ B \ar[rr]_{x} & & D}$$
and an arrow $h \colon W \to B\times_{x,y}C.$ Consider also two nullhomotopies $\varphi \in \Theta_{\gamma}(y' \cdot h)$ and 
$\psi \in \Theta_{\gamma}(x' \cdot h)$
such that $x \circ \varphi = y \circ \psi.$ This means that we have arrows $\varphi \colon \cS W \to B$ and $\psi \colon \cS W \to C$ such that
$\varphi \cdot \gamma_W = y' \cdot h, \psi \cdot \gamma_W = x' \cdot h$ and $x \cdot \varphi = y \cdot \psi.$ From the universal property of the pullback,
we get a unique arrow $\langle \varphi,\psi \rangle \colon \cS W \to B\times_{x,y}C$ such that $y' \cdot \langle \varphi,\psi \rangle = \varphi$ and 
$x' \cdot \langle \varphi,\psi \rangle = \psi.$ The arrow $\langle \varphi,\psi \rangle$ is in fact a nullhomotopy on $h$ because, by composing with the pullback
projections, we can check that $\langle \varphi,\psi \rangle \cdot \gamma_W= h.$ 
Moreover, $y' \circ \langle \varphi,\psi \rangle = y' \cdot \langle \varphi,\psi \rangle
= \varphi$ and $x' \circ \langle \varphi,\psi \rangle = x' \cdot \langle \varphi,\psi \rangle = \psi,$ as required. The uniqueness of such a nullhomotopy is clear: if
$\lambda \in \Theta_{\gamma}(h)$ is such that $y' \circ \lambda = \varphi$ and $x' \circ \lambda = \psi,$ we have $y' \cdot \lambda = \varphi$ and 
$x' \cdot \lambda = \psi,$ so that $\lambda = \langle \varphi,\psi \rangle.$
\end{proof}

\begin{Corollary}\label{CorHKHC}
Consider the following string of adjunctions
$$\xymatrix{ \cA \ar[rr]|{\cU} & & \cB \ar@<-1.5ex>[ll]_{\cC} \ar@<1.5ex>[ll]^{\cD} }
\;\;\;\;\; \cC \dashv \cU \dashv \cD$$
with $\cU$ full and faithful. Put on $\cB$ one of the three isomorphic structures of nullhomotopies as in Proposition \ref{PropCUD}.
If $\cB$ has pullbacks, then it has strong homotopy kernels.
\end{Corollary}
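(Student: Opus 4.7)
The plan is to apply Proposition \ref{PropExistenceHKHC}, which reduces the existence of strong $\Theta$-kernels of arbitrary arrows to two separate verifications: the existence of strong $\Theta$-kernels of identity arrows, and the $\Theta$-strongness of the pullbacks assumed to exist in $\cB$. By Proposition \ref{PropCUD}.2 the three candidate structures $\Theta_\beta$, $\Theta_\gamma$, $\Theta_{\gamma,\beta}$ are isomorphic in $\Null(\cB)$; since an isomorphism in $\Null(\cB)$ is given by bijections of nullhomotopy sets compatible with all the operations $h \circ - \circ f$, it transports both the property of being a (strong) homotopy kernel and the property of being a $\Theta$-strong pullback from one structure to another. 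It therefore suffices to check each of the two ingredients for whichever of the three isomorphic structures is most convenient.

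For the kernels of identity arrows I would work with the structure $\Theta_\beta$ and appeal to Lemma \ref{LemmaUnitHCoker}. The point is that full faithfulness of $\cU$ forces the unit $\alpha$ of $\cU \dashv \cD$ to be an isomorphism, hence the comultiplication $\cU\alpha\cD$ of the comonad $\cU\cD$ on $\cB$ is an isomorphism as well: $\cU\cD$ is thus an idempotent comonad with counit $\beta$, so the strong part of Lemma \ref{LemmaUnitHCoker} supplies, for every $X \in \cB$, a strong $\Theta_\beta$-kernel of $\id_X$ given by $\beta_X \colon \cU\cD X \to X$ equipped with the canonical nullhomotopy $\id_{\cU\cD X}$.

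For the pullbacks I would instead switch to the structure $\Theta_\gamma$ and apply Lemma \ref{LemmaStrongPBPO} directly to the pre-coradical $\gamma \colon \cId \Rightarrow \cU\cC$: every pullback existing in $\cB$ is automatically $\Theta_\gamma$-strong. Transporting both facts back to a common structure along the isomorphisms of Proposition \ref{PropCUD}.2 and feeding the result into Proposition \ref{PropExistenceHKHC} then delivers strong $\Theta$-kernels of all arrows in $\cB$. The only genuinely nonroutine step is the transport lemma for (strong) homotopy kernels and $\Theta$-strong pullbacks along a $\Null(\cB)$-iso, but this is essentially forced by the definitions, since every relevant universal property is phrased entirely in terms of operations that a morphism in $\Null(\cB)$ preserves by definition.
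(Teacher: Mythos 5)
Your proposal is correct and follows essentially the same route as the paper: reduce via Proposition \ref{PropExistenceHKHC} to strong kernels of identities (obtained from Lemma \ref{LemmaUnitHCoker} applied to the idempotent comonad $\cU\cD$ with counit $\beta$) and to $\Theta_{\gamma}$-strong pullbacks (Lemma \ref{LemmaStrongPBPO}), then transport along the isomorphisms of Proposition \ref{PropCUD}. Your explicit justification that $\cU\cD$ is idempotent (via the unit $\alpha$ being invertible) and your remark on transporting properties along isomorphisms in $\Null(\cB)$ are details the paper leaves implicit, and both are sound.
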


\begin{proof}
Write, as usual, $\gamma$ for the unit of the adjunction $\cC \dashv \cU$ and $\beta$ for the counit of the adjunction $\cU \dashv \cD.$
If we apply Lemma \ref{LemmaUnitHCoker} to the idempotent comonad $\cR = \cU \cdot \cD$ with counit $\beta,$ we deduce that $\cB$
has strong homotopy kernels of the identity arrows with respect to the structure $\Theta_{\beta}.$ If, moreover, we apply Lemma \ref{LemmaStrongPBPO}
to the pre-coradical $\gamma$ on $\cS = \cU \cdot \cC,$ we deduce that pullbacks in $\cB$ are strong with respect to the structure $\Theta_{\gamma}.$
Since, by Proposition \ref{PropCUD}, $\Theta_{\beta}$ and $\Theta_{\gamma}$ are isomorphic structures, we can apply Proposition \ref{PropExistenceHKHC}
to conclude that $\cB$ has strong homotopy kernels. 
\end{proof}

\begin{Remark}\label{RemExplHKHC}{\rm
Consider the situation of Corollary \ref{CorHKHC}.
\begin{enumerate}
\item Explicitly, the strong homotopy kernel of an arrow $g \colon X \to Y$ in $\cB$ can be described as follows: write 
$\beta$ for the counit of the adjunction $\cU \dashv \cD$ and consider the pullback
$$\xymatrix{ & \cU\cD Y \ar[rd]^{\beta_Y} \\
X \times_{g,\beta_Y} \cU\cD Y \ar[r]_-{\beta_Y'} \ar[ru]^{g'}  & X \ar[r]_-{g} & Y }$$
then $\cN(g) = X \times_{g,\beta_Y} \cU\cD Y,$ $n_g = \beta_Y'$ and $\nu_g = g'.$ 
\item The above construction already appears in \cite{GJ} but, since nullhomotopies are not taken into account, what 
is shown in \cite{GJ} is that 
$$\xymatrix{X \times_{g,\beta_Y} \cU\cD Y \ar[r]_-{\beta_Y'} & X \ar[r]_-{g} & Y }$$
is a weak $\cZ_1(\Theta)$-kernel of $g$ (see Definition \ref{DefZK} for the notion of $\cZ_1$-kernel).
\end{enumerate}
}\end{Remark} 

\begin{Example}\label{ExArrHKHC}{\rm
Consider the structure of nullhomotopies $\cH(\cA)$ in $\Arr(\cA)$ induced by the string of adjunctions $\cC \dashv \cU \dashv \cD$ 
as in Example \ref{ExArr}. Since $\Arr(\cA)$ is a category of presheaves with values in $\cA,$ we can improve Corollary \ref{CorHKHC} and
we get the following fact, proved directly in \cite{EVhcc}:
\begin{enumerate}
\item[-] 
if $\cA$ has pullbacks, then $\Arr(\cA)$ has strong $\cH(\cA)$-kernels. 
\end{enumerate}
More precisely, since pullbacks in $\Arr(\cA)$ are constructed level-wise from those in $\cA,$ we obtain the following explicit 
description of $\cH(\cA)$-kernels (use the description of units from  Example \ref{ExArr} and the 
construction of homotopy kernels from the proof of Proposition \ref{PropExistenceHKHC}). Let
$$\xymatrix{X \ar[r]^{g} \ar[d]_{x} & Y \ar[d]^{y} \\
X_0 \ar[r]_{g_0} & Y_0}$$
be an arrow in $\Arr(\cA).$ 
If the pullback of $g_0$ and $y$ exists, then a $\cH(\cA)$-kernel of $(g,g_0)$ is given by
$$\xymatrix{X \ar[rr]^{\id} \ar[d]_{\langle x,g \rangle} & & X \ar[d]_<<<{x} \ar[rr]^{g} & & Y \ar[d]^{y} \\ 
X_0 \times_{g_0,y} Y \ar[rrrru]_>>>>>>>>>>>>{g_0'} \ar[rr]_{y'} & & X_0 \ar[rr]_{g_0} & & Y_0}$$
This description already appears in \cite{SnailEV, EVhcc}, with its universal property as homotopy limit, as well as in 
\cite{GJ}, with the weak universal property recalled in Remark \ref{RemExplHKHC}.2.
}\end{Example} 

\begin{Example}\label{ExHExt}{\rm
Consider, in a category $\cA,$ any pair of composable arrows
$$\xymatrix{X \ar[r]^{g} & Y \ar[r]^{h} & Z }$$
The following diagram is a $\cH(\cA)$-extension in $\Arr(\cA),$ that is, $((X,g,Y),(\id_X,h), \id_Y)$ is the $\cH(\cA)$-kernel 
of $(g,\id_Z)$ and $((Y,h,Z),(g,\id_Z),\id_Y)$ is the $\cH(\cA)$-cokernel of $(\id_X,h)$ 
$$\xymatrix{X \ar[rr]^-{\id} \ar[d]_{g} & & X \ar[rr]^-{g} \ar[d]_<<<{h \cdot g} & & Y \ar[d]^{h} \\
Y \ar[rr]_-{h} \ar[rrrru]^<<<<<<<<<<{\id} & & Z \ar[rr]_-{\id} & & Z }$$
In fact, every $\cH(\cA)$-extension in $\Arr(\cA)$ is, up to isomorphism, of this form. This fact, needed in Section \ref{SecQP}, 
will be explained in more detail in Remark \ref{RemInvConstr}.
}\end{Example}

\section{A characterization of pre-pointed categories}\label{SecCharPrep}

The aim of this section is to give a characterization of pre-pointed categories among categories with a structure of nullhomotopies. 
We start by observing that, if the structure of nullhomotopies $\Theta$ comes from a monad or a comonad, then the ideal $\cZ_1(\Theta)$ 
is closed. We state the case of a monad, the situation for a comonad is dual.

\begin{Proposition}\label{PropMonComon}
Let $\gamma \colon \cId \Rightarrow \cS \colon \cB \to \cB$ be a pre-coradical in $\cB.$ 
\begin{enumerate}
\item  An object $X \in \cB$ is $\Theta_{\gamma}$-trivial iff $\gamma_X$ is a split mono.
\item In particular, if $\cS \colon \cB \to \cB$ is a monad with unit $\gamma,$ then:
\begin{enumerate}
\item If $(A,a \colon \cS A \to A)$ is an $\cS$-algebra, then $A$ is a $\Theta_{\gamma}$-trivial object.
\item For any object $X,$ the object $\cS X$ is $\Theta_{\gamma}$-trivial. 
\item The ideal $\cZ_1(\Theta_{\gamma})$ is closed: $\cZ_1(\Theta_{\gamma}) = i \{ \cS B \mid B \in \cB \}.$
\end{enumerate}
\item In particular, if the monad $\cS \colon \cB \to \cB$ is induced by an adjunction
$$\xymatrix{\cA \ar@<-0.5ex>[rr]_-{\cU} & & \cB \ar@<-0.5ex>[ll]_-{\cC} } \;\;\;
\cC \dashv \cU \;\;\;\;\; \gamma_B \colon B \to \cU\cC B \;,\;\; \delta_A \colon \cC\cU A \to A$$
with $\cU$ full and faithful, then:
\begin{enumerate}
\item $\cZ_1(\Theta_{\gamma}) = i \{ \cU A \mid A \in \cA \}.$
\item The set of $\Theta_{\gamma}$-trivial objects coincides with the closure in $\cB$ of $\{ \cU A \mid A \in \cA \}$ under isomorphisms.
\item If $X$ is a $\Theta_{\gamma}$-trivial object, then $X \perp Y$ for every object $Y \in \cB.$
\end{enumerate} 
\end{enumerate}
\end{Proposition}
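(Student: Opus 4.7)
The plan is to treat the three parts in sequence, since later claims depend on earlier ones.

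Part 1 is pure unfolding: by \ref{TextRad}.2, $\Theta_\gamma(\id_X) = \{\varphi \colon \cS X \to X \mid \varphi \cdot \gamma_X = \id_X\}$, which is non-empty exactly when $\gamma_X$ admits a retraction. Part 2(a) follows directly: the unit axiom $a \cdot \gamma_A = \id_A$ of an $\cS$-algebra exhibits $a$ as a retraction of $\gamma_A$. Part 2(b) is 2(a) applied to the free algebra $(\cS X, \mu_X)$. For Part 2(c) I would establish $\cZ_1(\Theta_\gamma) = i\{\cS B \mid B \in \cB\}$ by direct inclusions: if $g \in \cZ_1(\Theta_\gamma)$, then $g = \varphi \cdot \gamma_X$ factors through $\cS X$; conversely, if $g$ factors through $\cS B$, then triviality of $\cS B$ (from 2(b)) together with \ref{TextTrivOrth}.2 gives $\Theta_\gamma(g) \neq \emptyset$. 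Closedness of this ideal is then automatic from \ref{TextRetrId}.2.

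For Part 3(a) I would argue that the two generating sets $\{\cS B\}$ and $\{\cU A\}$ produce the same ideal. The inclusion $i\{\cS B\} \subseteq i\{\cU A\}$ is trivial since $\cS B = \cU\cC B$. For the reverse, the assumption that $\cU$ is fully faithful makes $\delta$ iso, and then the triangle identity $\cU(\delta_A) \cdot \gamma_{\cU A} = \id$ forces $\gamma_{\cU A}$ to be iso as well; hence $\cU A \cong \cS\cU A$ and any arrow through $\cU A$ also factors through some $\cS B$. Part 3(c) is immediate once 3(b) is in hand: if $X$ is $\Theta_\gamma$-trivial, then $\gamma_X$ is iso, so for any $h \colon X \to Y$ the equation $\varphi \cdot \gamma_X = h$ has the unique solution $\varphi = h \cdot \gamma_X^{-1}$, giving $|\Theta_\gamma(h)| = 1$, i.e., $X \perp Y$.

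The main work, and the expected obstacle, is Part 3(b), namely upgrading the retraction produced by Part 1 to a genuine inverse. The easy direction is that every $\cU A$ is $\Theta_\gamma$-trivial (because $\gamma_{\cU A}$ is iso, hence a split monomorphism), and triviality is preserved by isomorphism via \ref{TextTrivOrth}.1. For the converse, I would take $X$ trivial with $r \colon \cS X \to X$ satisfying $r \cdot \gamma_X = \id_X$ and show that $\gamma_X$ is actually iso with inverse $r$. Applying $\cC$ to $r \cdot \gamma_X = \id_X$ and using that $\cC(\gamma_X)$ is iso (first triangle identity $\delta_{\cC X} \cdot \cC(\gamma_X) = \id$ combined with $\delta$ iso) yields $\cC(r)$ iso, and hence $\cU\cC(r)$ iso. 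Since $\gamma_{\cU\cC X}$ is also iso (same argument applied with $A = \cC X$), naturality of $\gamma$ gives $\gamma_X \cdot r = \cU\cC(r) \cdot \gamma_{\cU\cC X}$, which is therefore iso. But $\gamma_X \cdot r$ is idempotent (since $r \cdot \gamma_X = \id$), and an idempotent isomorphism is the identity. This forces $\gamma_X \cdot r = \id$, so $\gamma_X$ is an isomorphism and $X \cong \cU\cC X$, as required.
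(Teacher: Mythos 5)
Your proposal is correct. Parts 1, 2(a), 2(b) and 3(a) match the paper's argument. In 2(c) you handle the converse inclusion by invoking \ref{TextTrivOrth}.2 (any arrow factoring through a $\Theta$-trivial object admits a nullhomotopy), whereas the paper writes down the explicit nullhomotopy $b \cdot \mu_B \cdot \cS(a)$ using the multiplication of the monad; both are valid, and your route is slightly more economical since \ref{TextTrivOrth}.2 is already available. The genuine divergence is in 3(b): the paper deduces the statement from \ref{TextRetrId}.4 (the trivial objects form the retract-closure of $\{\cU A\}$) together with the ``standard fact'' that any retract of an object $\cU A$ has invertible unit $\gamma_B$, with an explicitly exhibited inverse. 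You instead prove directly that for a trivial $X$ the splitting $r \cdot \gamma_X = \id_X$ forces $\gamma_X$ to be invertible: you apply $\cC$ and the triangle identities to get $\cU\cC(r)$ invertible, use naturality to write $\gamma_X \cdot r = \cU\cC(r) \cdot \gamma_{\cU\cC X}$ as a composite of isomorphisms, and conclude via the observation that an idempotent isomorphism is the identity. This is a clean, self-contained alternative that bypasses the retract machinery of \ref{TextRetrId}, and it has the added benefit of making 3(c) immediate for an arbitrary trivial $X$ (since $\gamma_X$ itself is invertible), where the paper first reduces to the case $X = \cU A$ via 3(b).
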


\begin{proof} 
1. The commutativity of the following diagram expresses at the same time that $\gamma_X$ is a split mono and that 
$\varphi \in \Theta_{\gamma}(\id_X),$ so that $X$ is $\Theta_{\gamma}$-trivial
$$\xymatrix{ & \cS X \ar[rd]^{\varphi} \\ X \ar[ru]^{\gamma_X} \ar[rr]_{\id} & & X }$$
2(a) If $(A,a \colon \cS A \to A)$ is a $\cS$-algebra, then $a \cdot \gamma_A = \id_A$ and we apply the previous point. \\
2(b) This follows from 2(a) because $(\cS X, \mu_X \colon \cS\cS X \to \cS X)$ is a $\cS$-algebra, $\mu$ being the 
multiplication of the monad. \\
2(c) The inclusion $\cZ_1(\Theta_{\gamma}) \subseteq i \{ \cS B \mid B \in \cB \}$ is obvious: if an arrow $g \colon X \to Y$ is in 
$\cZ_1(\Theta_{\gamma}),$ then $\Theta_{\gamma}(g)$ is non-empty, so that there exists an arrow $\varphi \colon \cS X \to Y$ 
such that $g =\varphi \cdot  \gamma_X \colon X \to \cS X \to Y.$ (By 2(b), the object $\cS X$ is $\Theta_{\gamma}$-trivial, 
so that we can already conclude that $\cZ_1(\Theta_{\gamma})$ is closed.) \\
Conversely, if an arrow $g \colon X \to Y$ can be factorized as $g = b \cdot a \colon X \to \cS B \to Y,$ we get
$$b \cdot \mu_B \cdot \cS(a) \cdot \gamma_X = b \cdot \mu_B \cdot \gamma_{\cS B} \cdot a = b \cdot a = g$$
so that $b \cdot \mu_B \cdot \cS(a) \in \Theta_{\gamma}(g)$ and then $g \in \cZ_1(\Theta_{\gamma}).$ \\
3(a) Obvious because $\cS B = \cU\cC B$ and $\cU A \simeq \cU\cC\cU A.$ \\
3(b) This is a consequence of \ref{TextRetrId}.4 and of the following standard fact: if an object $B \in \cB$ is a retract of 
an object coming from $\cA,$ say $\id_B = y \cdot x \colon B \to \cU A \to B,$ then the unit $\gamma_B \colon B \to \cU\cC B$
is an isomorphism with inverse given by $y \cdot \cU(\delta_A) \cdot \cU\cC(x).$ \\
3(c) It is enough to prove the statement when $X = \cU A$ for $A \in \cA.$ Consider an arrow $g \colon \cU A \to Y.$ Then
$\Theta_{\gamma}(g) = \{ \varphi \colon \cU\cC\cU A \to Y \mid \varphi \cdot \gamma_{\cU A} = g \}.$ But $\gamma_{\cU A}$
is an isomorphism, so that $\Theta_{\gamma}(g)$ is reduced to the element $\varphi = g \cdot \gamma_{\cU A}^{-1}.$
\end{proof}

\begin{Corollary}\label{CorTrivAdj}
Consider a string of adjunctions
$$\xymatrix{ \cA \ar[rr]|{\cU} & & \cB \ar@<-1.5ex>[ll]_{\cC} \ar@<1.5ex>[ll]^{\cD} }
\;\;\;\;\; \cC \dashv \cU \dashv \cD  \;\;\;\;\; \gamma_B \colon B \to \cU\cC B \;,\;\; \beta_B \colon \cU\cD B \to B$$
with $\cU$ full and faithful. Let $\Theta$ be the induced structure of nullhomotopies 
on $\cB.$ The $\Theta$-trivial objects are orthogonal, on  both sides, to any object of $\cB.$
\end{Corollary}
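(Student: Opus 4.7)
The plan is to reduce the statement to Proposition \ref{PropMonComon}(3)(c) and its dual, using the isomorphism between the three structures of nullhomotopies provided by Proposition \ref{PropCUD}.

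First I would observe that the notions of $\Theta$-trivial object and of $\Theta$-orthogonality depend on $\Theta$ only up to isomorphism in $\Null(\cB)$. Indeed, any isomorphism $\cI \colon \Theta \to \Theta'$ of structures of nullhomotopies is a family of bijections $\cI_g \colon \Theta(g) \to \Theta'(g)$, so in particular $\Theta(\id_X) \neq \emptyset$ iff $\Theta'(\id_X) \neq \emptyset$, and $\Theta(h)$ is a singleton iff $\Theta'(h)$ is. Hence, by Proposition \ref{PropCUD}.2, I may replace the ambient $\Theta$ by any of the three concrete structures $\Theta_\beta$, $\Theta_\gamma$, $\Theta_{\gamma,\beta}$ at my convenience.

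For left-orthogonality (i.e.\ $T \perp Y$ for every $Y \in \cB$ whenever $T$ is $\Theta$-trivial), I would choose the structure $\Theta_\gamma$ induced by the pre-coradical $\gamma \colon \cId \Rightarrow \cU\cC$. Since $\cU\cC$ is an idempotent monad with unit $\gamma$, Proposition \ref{PropMonComon}(3)(c) applies verbatim and yields exactly the desired left-orthogonality.

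For right-orthogonality (i.e.\ $X \perp T$ for every $X \in \cB$ whenever $T$ is $\Theta$-trivial), I would invoke the dual of Proposition \ref{PropMonComon}, applied this time to the pre-radical $\beta \colon \cU\cD \Rightarrow \cId$, whose underlying endofunctor $\cU\cD$ is an idempotent comonad. The dual statement says: if $T$ is $\Theta_\beta$-trivial, then for every arrow $g \colon X \to T$ the set $\Theta_\beta(g)$ is a singleton, namely the unique $\psi \colon X \to \cU\cD T$ with $\beta_T \cdot \psi = g$, which exists and is unique because $\beta_T$ is an isomorphism (as $\cU\cD$ is idempotent and $T$ lies in its image up to iso, by the dual of Proposition \ref{PropMonComon}(3)(b)). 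Putting the two sides together and using the identification of trivial objects across the three structures gives the corollary.

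The proof is essentially a packaging exercise; the only point deserving a moment of care is the verification that an isomorphism in $\Null(\cB)$ transports both $\Theta$-triviality and $\Theta$-orthogonality, so that switching between $\Theta_\beta$ and $\Theta_\gamma$ is harmless. Once that is noted, the two Proposition \ref{PropMonComon}(3)(c) statements (direct and dual) deliver the two orthogonalities immediately.
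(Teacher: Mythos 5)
Your proposal is correct and follows essentially the same route as the paper: both reduce the statement to Proposition \ref{PropMonComon}.3(c) and its dual via the isomorphisms $\Theta \simeq \Theta_{\gamma} \simeq \Theta_{\beta}$ of Proposition \ref{PropCUD}, using the fact that triviality and orthogonality are preserved by isomorphisms in $\Null(\cB)$. The only cosmetic difference is that the paper first invokes \ref{PropMonComon}.3(b) to reduce to trivial objects of the form $\cU A$, a reduction which is already contained in the proof of 3(c) and which you handle implicitly.
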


\begin{proof}
Thanks to Proposition \ref{PropMonComon}.3(b), it is enough to check the statement for the $\Theta$-trivial objects 
of the form $\cU A,$ with $A$ varying in $\cA.$ We use the isomorphisms of Proposition \ref{PropCUD}. \\
- If $g \colon \cU A \to Y,$ then $\Theta(g) \simeq \Theta_{\gamma}(g) = \{ \ast \}$ by Proposition \ref{PropMonComon}.3(c). \\
- If $g \colon X \to \cU A,$ then $\Theta(g) \simeq \Theta_{\beta}(g) = \{ \ast \}$ by the dual of Proposition \ref{PropMonComon}.3(c).
\end{proof} 

\begin{Example}\label{ExArrId}{\rm
Consider the structure of nullhomotopies $\cH(\cA)$ in $\Arr(\cA)$ induced by the string of adjunctions 
$\cC \dashv \cU \dashv \cD$ as in Example \ref{ExArr}. Write $\cZ_1(\cA)$ for the associated ideal of 
arrows in $\Arr(\cA),$ as in Lemma \ref{LemmaNullId}. The previous analysis shows that $\cZ_1(\cA)$ 
is closed: there exists a nullhomotopy $\lambda \colon X_0 \to Y$ on an arrow 
$(g,g_0) \colon (X,x,X_0) \to (Y,y,Y_0)$ iff $(g,g_0)$ factors through some $\cZ_1(\cA)$-trivial objects. 
Following \ref{PropMonComon}.3, the $\cZ_1(\cA)$-trivial objects are precisely the objects $(N,n,N_0)$ 
such that $n \colon N \to N_0$ is an isomorphism. Clearly, such an object $(N,n,N_0)$ is orthogonal, 
on both sides, to any object of $\Arr(\cA).$
}\end{Example}

\begin{Text}\label{TextCharPrep}{\rm
With the next results, we show that, up to identifying a full and faithful functor with its replete image, pre-pointed 
categories as in Proposition \ref{PropCUD} can be detected, among categories with nullhomotopies, by
the existence of some strong homotopy kernels and homotopy cokernels and by the behavior of trivial objects. 
Keep in mind Lemma \ref{LemmaUnitHCoker} and Proposition \ref{PropMonComon}.3(c), which show that the 
assumptions in Lemma \ref{LemmaPreRad} and in Proposition \ref{PropCharactPrepointed} are indeed necessary 
conditions.
}\end{Text} 

\begin{Lemma}\label{LemmaPreRad}
Let $(\cB,\Theta)$ be a category with nullhomotopies. 
If, for every object $X \in \cB,$ there exists a $\Theta$-kernel
$$\xymatrix{\cN(\id_X) \ar[rr]_{n_X} \ar@/^1.8pc/@{-->}[rrrr]^{}_{\nu_X \; \Downarrow} & & X \ar[rr]_-{\id} & & X }$$
then $n \colon \cN \Rightarrow \Id \colon \cB \to \cB$ is a pre-radical and the induced structure of nullhomotopies
$\Theta_n$ (see \ref{TextRad}.2) coincides with $\Theta.$
\end{Lemma}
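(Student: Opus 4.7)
The plan is to use the universal property of the given $\Theta$-kernels of identity arrows both to extend $\cN$ to a functor and to identify $\Theta_n(g)$ with $\Theta(g)$ arrow by arrow.

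First I would define $\cN$ on morphisms. Given $f \colon X \to Y,$ note that $f \circ \nu_X$ lies in $\Theta(f \cdot n_X) = \Theta(\id_Y \cdot f \cdot n_X),$ so the universal property of $(\cN(Y),n_Y,\nu_Y)$ as a $\Theta$-kernel of $\id_Y$ yields a unique arrow $\cN(f) \colon \cN(X) \to \cN(Y)$ with
$$n_Y \cdot \cN(f) = f \cdot n_X \quad\text{and}\quad \nu_Y \circ \cN(f) = f \circ \nu_X.$$
Functoriality of $\cN$ is then routine from the uniqueness clause in this universal property (comparing the defining equations of $\cN(g \cdot f)$ with those of $\cN(g) \cdot \cN(f),$ and similarly for identities). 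The first equation above is exactly naturality of $n \colon \cN \Rightarrow \Id,$ so $n$ is a pre-radical in the sense of \ref{TextRad}.

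Next I would exhibit, for every arrow $g \colon X \to Y,$ a bijection between the set $\Theta_n(g) = \{\psi \colon X \to \cN(Y) \mid n_Y \cdot \psi = g\}$ and $\Theta(g).$ In one direction, given $\lambda \in \Theta(g) = \Theta(\id_Y \cdot g),$ the universal property of the $\Theta$-kernel of $\id_Y$ produces a unique $\bar\lambda \colon X \to \cN(Y)$ with $n_Y \cdot \bar\lambda = g$ and $\nu_Y \circ \bar\lambda = \lambda;$ in particular $\bar\lambda \in \Theta_n(g).$ In the other direction, given $\psi \in \Theta_n(g),$ set $\widehat\psi := \nu_Y \circ \psi \in \Theta(n_Y \cdot \psi) = \Theta(g).$ The two assignments are mutually inverse, again by the uniqueness clause of the universal property.

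Finally, I would check that this family of bijections $\cI_g \colon \Theta(g) \xrightarrow{\sim} \Theta_n(g)$ is a morphism in $\Null(\cB),$ i.e.\ commutes with the horizontal operations $h \circ - \circ f.$ For a composable string $W \xrightarrow{f} X \xrightarrow{g} Y \xrightarrow{h} Z$ and $\psi \in \Theta_n(g),$ the operation in $\Theta_n$ produces $\cN(h) \cdot \psi \cdot f,$ while in $\Theta$ it produces $h \circ (\nu_Y \circ \psi) \circ f.$ Using the axioms of Definition \ref{DefNullHom} together with the defining equation $\nu_Z \circ \cN(h) = h \circ \nu_Y,$ one computes
$$\nu_Z \circ (\cN(h) \cdot \psi \cdot f) = (\nu_Z \circ \cN(h)) \circ \psi \circ f = (h \circ \nu_Y) \circ \psi \circ f = h \circ (\nu_Y \circ \psi) \circ f,$$
so compatibility holds. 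Hence $\Theta_n \simeq \Theta$ in $\Null(\cB),$ as required.

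I do not expect any serious obstacle here; the only care needed is in the last step, in keeping track of which axiom of a nullhomotopy structure is being applied when moving parentheses and in invoking the defining equation of $\cN(h)$ at the right moment.
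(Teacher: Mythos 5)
Your proposal is correct and follows essentially the same route as the paper: $\cN$ is extended to arrows via the universal property of the $\Theta$-kernel of $\id_Y$ applied to $g \circ \nu_X,$ and the isomorphism $\Theta_n(g) \simeq \Theta(g)$ is realized by $\psi \mapsto \nu_Y \circ \psi$ and by the unique factorization of a nullhomotopy $\lambda \in \Theta(g)$ through the kernel. The only difference is that you spell out the compatibility with the operations $h \circ - \circ f,$ which the paper leaves as "easy to check"; your computation of that step is accurate.
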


\begin{proof}
The functor $\cN \colon \cB \to \cB$ of the statement sends an object $X \in \cB$ to the object part of the 
$\Theta$-kernel of the identity on $X.$ To extend it to arrows, consider an arrow $g \colon X \to Y.$ Since 
$\nu_X \in \Theta(\id_X \cdot n_X) = \Theta(n_X),$ then $g \circ \nu_X \in \Theta(g \cdot n_X) = \Theta(\id_Y \cdot g \cdot n_X).$
By the universal property of $\cN(\id_Y),$ we get a unique arrow $\cN_g \colon \cN(\id_X) \to \cN(\id_Y)$ such that
$g \cdot n_X = n_Y \cdot \cN_g.$ This gives at once the definition of the functor $\cN$ on arrows and the naturality of 
$n \colon \cN \Rightarrow \Id.$ \\
In order to construct the isomorphism $\Theta_n \simeq \Theta,$ recall that, for an arrow $g \colon X \to Y,$ we have
$\Theta_n(g) = \{ \psi \colon X \to \cN(\id_Y) \mid n_Y \cdot \psi = g \}.$ Now we put: \\
- $\Theta_n(g) \to \Theta(g) \colon \psi \mapsto \nu_Y \circ \psi$ \\
- $\Theta(g) \to \Theta_n(g) \colon \lambda \mapsto \lambda',$ where $\lambda' \colon X \to \cN(\id_Y)$ is the unique 
arrow such that $n_Y \cdot \lambda' = g$ and $\nu_Y \circ \lambda' = \lambda.$ \\
To check that these maps realize an isomorphism of nullhomotopy structures is easy.
\end{proof} 

\begin{Proposition}\label{PropCharactPrepointed}
Let $(\cB, \Theta)$ be a category with nullhomotopies and let $\cA$ be the full subcategory of $\Theta$-trivial objects.
Assume that
\begin{enumerate}
\item For every object $X \in \cB$ there exist a strong $\Theta$-kernel and a strong $\Theta$-cokernel of $\id_X$
$$\xymatrix{\cN(\id_X) \ar[rr]_{n_X} \ar@/^1.8pc/@{-->}[rrrr]^{}_{\nu_X \; \Downarrow} & & 
X \ar[rr]_-{\id} \ar@/^1.8pc/@{-->}[rrrr]^{}_{\Downarrow \; \theta_X} & & X \ar[rr]_{q_X} & & \cQ(\id_X) }$$
\item For all $A \in \cA$ and for all $X \in \cB,$ we have $A \perp X$ and $X \perp A.$
\end{enumerate}
Then $\cA$ is reflective and coreflective in $\cB$ and the induced structure of nullhomotopies on $\cB$ (see Proposition
\ref{PropCUD}) coincides with $\Theta.$
\end{Proposition}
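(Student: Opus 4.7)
The plan is to identify the full subcategory $\cA$ of $\Theta$-trivial objects as being simultaneously reflective and coreflective in $\cB$, with counit of the coreflection given by $n_X \colon \cN(\id_X) \to X$ and unit of the reflection given by $q_X \colon X \to \cQ(\id_X)$, and then to invoke Lemma \ref{LemmaPreRad} (together with its dual for $\Theta$-cokernels) to identify the induced structure of nullhomotopies with $\Theta$ itself.

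First, we check that $\cN(\id_X)$ lies in $\cA$. Since $\id_X$ is an isomorphism and, by assumption 1, the $\Theta$-kernel $(\cN(\id_X),n_X,\nu_X)$ is strong, Lemma \ref{LemmaTrivOrth}.2 applies to give $\Theta(\id_{\cN(\id_X)}) \neq \emptyset$, i.e.\ $\cN(\id_X) \in \cA$. Dually, $\cQ(\id_X) \in \cA$. Lemma \ref{LemmaPreRad} then yields a pre-radical $n \colon \cN \Rightarrow \Id$ on $\cB$ which, by what we have just shown, factors through the inclusion $\cU \colon \cA \hookrightarrow \cB$; write the factorization as $\cD \colon \cB \to \cA$, so that $\cU \cD = \cN$ and $n_X$ becomes an arrow $\cU\cD X \to X$.

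We claim that $(\cU, \cD, n)$ exhibits $\cD$ as right adjoint to $\cU$ with counit $n$. For existence, given $A \in \cA$ and $f \colon \cU A \to X$, pick any $\lambda \in \Theta(\id_A)$: then $f \circ \lambda \in \Theta(f \cdot \id_A) = \Theta(\id_X \cdot f)$, and the universal property of the $\Theta$-kernel of $\id_X$ produces an arrow $f' \colon A \to \cD X$ with $n_X \cdot f' = f$. For uniqueness, suppose $f'' \colon A \to \cD X$ also satisfies $n_X \cdot f'' = f$. Both $\nu_X \circ f'$ and $\nu_X \circ f''$ lie in $\Theta(\id_X \cdot n_X \cdot f'') = \Theta(f)$, which is a singleton by assumption 2 since $\cU A \perp X$; hence $\nu_X \circ f' = \nu_X \circ f''$, and Remark \ref{RemCanc}.1 forces $f' = f''$. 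Dually, using the strong $\Theta$-cokernels of identities together with $X \perp \cU A$, we obtain $\cQ = \cU \cC$ with $\cC \colon \cB \to \cA$ left adjoint to $\cU$ and unit $q$; this assembles into the string of adjunctions $\cC \dashv \cU \dashv \cD$ required by Proposition \ref{PropCUD}, and the inclusion $\cU$ is automatically full and faithful.

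Finally, comparing with the notation of Proposition \ref{PropCUD}, $\beta = n$ and $\gamma = q$. Lemma \ref{LemmaPreRad} gives the identification $\Theta_\beta = \Theta_n = \Theta$ on the nose, and its dual gives $\Theta_\gamma = \Theta_q = \Theta$; together with Proposition \ref{PropCUD}.2 this shows that the three isomorphic structures of nullhomotopies induced by the adjunctions coincide with $\Theta$. The main obstacle is the uniqueness in the universal property of the (co)reflection: without the orthogonality assumption 2 one would obtain only a \emph{weak} coreflection (compare Remark \ref{RemExplHKHC}.2), and so the full assumption $A \perp X$, $X \perp A$ is what upgrades the pre-radical/pre-coradical of Lemma \ref{LemmaPreRad} to a genuine adjunction. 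The strongness hypothesis on the $\Theta$-(co)kernels of identities is used in exactly one place, namely to ensure via Lemma \ref{LemmaTrivOrth}.2 that $\cN(\id_X)$ and $\cQ(\id_X)$ are actually $\Theta$-trivial, hence objects of $\cA$.
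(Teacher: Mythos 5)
Your proof is correct and follows essentially the same route as the paper: Lemma \ref{LemmaTrivOrth}.2 to see that $\cN(\id_X)$ and $\cQ(\id_X)$ are $\Theta$-trivial, the functor and pre-radical from Lemma \ref{LemmaPreRad}, the orthogonality hypothesis to get uniqueness in the universal property of the (co)reflection, and Lemma \ref{LemmaPreRad} again to identify the induced structure with $\Theta$. The only (immaterial) difference is that you spell out the coreflection side $\cU \dashv \cD$ and dualize for the reflection, whereas the paper does the reverse.
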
 

\begin{proof}
Since $\id_X \colon X \to X$ is an isomorphism and since its $\Theta$-kernel is strong, by Lemma \ref{LemmaTrivOrth}.2 
we have that the object $\cN(\id_X)$ is $\Theta$-trivial. Therefore, the functor $\cN \colon \cB \to \cB$ of Lemma
\ref{LemmaPreRad} factorizes through the subcategory $\cA.$ The same holds also for the functor $\cQ \colon \cB \to \cB$ 
dual of the one of Lemma \ref{LemmaPreRad}. It remains to prove that $q_X \colon X \to \cQ(\id_X)$ provides a unit 
for the adjunction $\cQ \dashv \cU$ and $n_X \colon \cN(\id_X) \to X$ provides a counit for the adjunction $\cU \dashv \cN,$
where $\cU \colon \cA \to \cB$ is the full inclusion. We do the job for the unit $q_X.$ Let $g \colon X \to A$ be an arrow with 
$A \in \cA.$ Since $A$ is $\Theta$-trivial, there exists $\lambda \in \Theta(\id_A)$ and then 
$\lambda \circ g \in \Theta(\id_A \cdot g) = \Theta(g \cdot \id_X).$ By the universal property of the $\Theta$-cokernel, there
exists a unique arrow $g' \colon \cQ(\id_X) \to A$ such that $g' \cdot q_X = g$ and $g' \circ \theta_X = \lambda.$ Consider
now another arrow $g'' \colon \cQ(\id_X) \to A$ such that $g'' \cdot q_X = g.$ To show that $g'' = g',$ we have to prove that
$g'' \circ \theta_X = \lambda,$ but $g'' \circ \theta_X$ and $\lambda$ are elements of $\Theta(g),$ which is reduced to a
singleton because $X \perp A.$ Finally, the fact that the structure of nullhomotopies induced on $\cB$ by the string of adjunctions
$\cQ \dashv \cU \dashv \cN$ coincides with the original structure $\Theta$ comes directly from Lemma \ref{LemmaPreRad}.
\end{proof} 

\section{Homotopy torsion theories}\label{SecHTT}

Torsion theories have been originally introduced in the context of abelian categories by Dickson, see \cite{Dick}. 
We refer to Chapter 1 in \cite{BO2} for the classical theory. 

\begin{Definition}\label{DefHTT}{\rm
Let $(\cB,\Theta)$ be a category with nullhomotopies.
A {\it homotopy torsion theory} in $\cB$ relative to the structure $\Theta$ (or $\Theta$-torsion theory) is given by two full subcategories
$$\cT \subseteq \cB \,,\; \cF \subseteq \cB$$
such that
\begin{enumerate}
\item[1)] Both subcategories are replete, that is, closed under isomorphisms. (Note that this condition does not depend on the structure $\Theta.$)
\item[2)] For any object $X \in \cB,$ there exists a $\Theta$-exact $(\cT,\cF)$-presentation, that is, a diagram 
$$\xymatrix{T(X) \ar[rr]_{t_X} \ar@/^1.8pc/@{-->}[rrrr]^{}_{\xi_X \; \Downarrow} & & X \ar[rr]_-{f_X} & & F(X) }$$
such that 
\begin{enumerate}
\item $T(X) \in \cT$ and the triple $(T(X), t_X, \xi_X)$ is a $\Theta$-kernel of $f_X.$
\item $F(X) \in \cF$ and the triple $(F(X), f_X, \xi_X)$ is a $\Theta$-cokernel of $t_X.$
\end{enumerate}
\item[3)] For all $T \in \cT$ and for all $F \in \cF,$ we have that $T$ is $\Theta$-orthogonal to $F$ (see \ref{TextTrivOrth}).
\end{enumerate}
}\end{Definition}

Here there are some test properties for our definition of homotopy torsion theory.

\begin{Proposition}\label{PropUniqHTT}
Let $(\cB,\Theta)$ be a category with nullhomotopies and $(\cT,\cF)$ a $\Theta$-torsion theory.
The $\Theta$-exact $(\cT,\cF)$-presentation of an object is essentially unique.
\end{Proposition}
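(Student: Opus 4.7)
The plan is to show that any two $\Theta$-exact $(\cT,\cF)$-presentations of an object $X$,
\[
\xymatrix{T \ar[rr]_-{t} \ar@/^1.5pc/@{-->}[rrrr]^{\xi\;\Downarrow} & & X \ar[rr]_-{f} & & F}
\qquad\text{and}\qquad
\xymatrix{T' \ar[rr]_-{t'} \ar@/^1.5pc/@{-->}[rrrr]^{\xi'\;\Downarrow} & & X \ar[rr]_-{f'} & & F'},
\]
are connected by a unique pair of isomorphisms $u\colon T\to T'$ and $v\colon F\to F'$ compatible with the rest of the data. The whole argument rests on turning condition~3 of Definition~\ref{DefHTT} into the statement that, whenever a source in $\cT$ is paired with a target in $\cF$, the relevant sets of nullhomotopies are singletons; this trivialises every compatibility check.

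First I would build $u$. Since $T\in\cT$ and $F'\in\cF$, orthogonality gives a unique element $\sigma\in\Theta(f'\cdot t)$. Applying the universal property of the $\Theta$-kernel $(T',t',\xi')$ of $f'$ to the triple $(T,t,\sigma)$ produces a unique $u\colon T\to T'$ with $t'\cdot u=t$ and $\xi'\circ u=\sigma$. Swapping the two presentations yields, in the same way, a unique $u'\colon T'\to T$ with $t\cdot u'=t'$ and $\xi\circ u'$ equal to the unique element of $\Theta(f\cdot t')$.

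Next I would show $u$ and $u'$ are mutually inverse via the cancellation property of homotopy kernels (Remark~\ref{RemCanc}). On the one hand $t\cdot(u'\cdot u)=t'\cdot u=t=t\cdot\id_T$; on the other hand $\xi\circ(u'\cdot u)$ and $\xi\circ\id_T$ both lie in $\Theta(f\cdot t)$, which is a singleton because $T\perp F$, hence they coincide. Cancellation gives $u'\cdot u=\id_T$, and symmetrically $u\cdot u'=\id_{T'}$. The construction of $v\colon F\to F'$ with $v\cdot f=f'$ is then performed in the exactly dual way, using the universal property of the $\Theta$-cokernels $(F,f,\xi)$ and $(F',f',\xi')$. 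Finally, compatibility of $u$ and $v$ with the nullhomotopies, namely $v\circ\xi\circ u^{-1}=\xi'$, is automatic: both sides lie in $\Theta(f'\cdot t')$, which is a singleton by orthogonality.

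The main obstacle I anticipate is purely bookkeeping: one must be careful to invoke the full universal property of a $\Theta$-kernel (which asks for compatibility both with the underlying arrow \emph{and} with the nullhomotopy), and to check that the orthogonality hypothesis neutralises the extra nullhomotopy condition every time it comes up. Once one sees that the singletons forced by $\cT\perp\cF$ make all the nullhomotopy identities free, the uniqueness statement reduces to the familiar uniqueness argument for torsion-theoretic presentations in the abelian setting, lifted verbatim to the homotopical context.
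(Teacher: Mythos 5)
Your proof is correct and follows essentially the same route as the paper's: orthogonality supplies the unique nullhomotopy needed to invoke the universal property of each $\Theta$-kernel, and the cancellation property of Remark \ref{RemCanc} combined with the fact that $\Theta(f\cdot t)$ is a singleton shows the two comparison maps are mutually inverse, with the dual argument for the $\cF$-part. The only (harmless) difference is cosmetic: where you observe directly that $\xi\circ(u'\cdot u)$ and $\xi$ both lie in the singleton $\Theta(f\cdot t)$, the paper routes the same fact through the intermediate nullhomotopy $\tau\circ a'$.
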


\begin{proof}
In the situation of Definition \ref{DefHTT}, consider two $\Theta$-exact $(\cT,\cF)$-presentations of an object $X \colon$
$$\xymatrix{T(X) \ar[rr]_{t_X} \ar@/^1.8pc/@{-->}[rrrr]^{}_{\xi_X \; \Downarrow} & & X \ar[rr]_-{f_X} & & F(X) }
\;\;\;\;\;\;
\xymatrix{T \ar[rr]_{t} \ar@/^1.8pc/@{-->}[rrrr]^{}_{\xi \; \Downarrow} & & X \ar[rr]_-{f} & & F }$$
Since $T \in \cT$ and $F(X) \in \cF,$ there exists a unique $\tau \in \Theta(f_X \cdot t).$ By the universal property of the $\Theta$-kernel
$(T(X),t_X,\xi_X),$ there exists a unique arrow $a \colon T \to T(X)$ such that $t_X \cdot a = t$ and $\xi_X \circ a = \tau.$
Since $T(X) \in \cT$ and $F \in \cF,$ there exists a unique $\tau' \in \Theta(f \cdot t_X).$  By the universal property of the $\Theta$-kernel
$(T,t,\xi),$ there exists a unique arrow $a' \colon T(X) \to T$ such that $t \cdot a' = t_X$ and $\xi \circ a' = \tau'.$ Observe that 
$\tau \circ a' = \xi_X$ because they are in $\Theta(f_X \cdot t \cdot a') = \Theta(f_X \cdot t_X),$ which is reduced to a singleton. Now we have
$$t_X \cdot a \cdot a' = t \cdot a' = t_X = t_X \cdot \id_{T(X)} \;\mbox{ and }\; \xi_X \circ a \cdot a' = \tau \circ a' = \xi_X = \xi_X \circ \id_{T(X)}$$
By Remark \ref{RemCanc}, we can conclude that $a \cdot a' = \id_{T(X)}.$ Similarly, we have $a' \cdot a = \id_T.$ The construction of an
isomorphism $F(X) \simeq F$ commuting with $f_X$ and $f$ is dual.
\end{proof}

\begin{Proposition}\label{PropCarTF}
Let $(\cB,\Theta)$ be a category with nullhomotopies and $(\cT,\cF)$ a $\Theta$-torsion theory. Fix an object $X \in \cB$ 
and a $\Theta$-exact $(\cT,\cF)$-presentation of $X$ as in Definition \ref{DefHTT}.
\begin{enumerate}
\item The following conditions are equivalent:
\begin{enumerate}
\item $X \in \cF,$ 
\item $f_X \colon X \to F(X)$ is an isomorphism,
\item $\Theta(t_X) \neq \emptyset,$
\item $T \perp X$ for all $T \in \cT.$
\end{enumerate} 
\item The following conditions are equivalent:
\begin{enumerate}
\item $X \in \cT,$ 
\item $t_X \colon T(X) \to X$ is an isomorphism,
\item $\Theta(f_X) \neq \emptyset,$
\item $X \perp F$ for all $F \in \cF.$
\end{enumerate} 
\end{enumerate}
\end{Proposition}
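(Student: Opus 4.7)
The plan is to prove part 1 via the short cycle (a) $\Rightarrow$ (d) $\Rightarrow$ (c) $\Rightarrow$ (b) $\Rightarrow$ (a); part 2 will follow by the completely symmetric argument with $\cT$ and $\cF$, kernels and cokernels, interchanged. The central observation driving everything is that in a $\Theta$-exact $(\cT,\cF)$-presentation the \emph{same} nullhomotopy $\xi_X$ simultaneously exhibits $(T(X),t_X,\xi_X)$ as a $\Theta$-kernel of $f_X$ and $(F(X),f_X,\xi_X)$ as a $\Theta$-cokernel of $t_X$. This lets us apply Lemma \ref{LemmaTrivOrth}.1 (and its cokernel dual) to each half of the presentation, with the $\Theta$-orthogonality $T(X) \perp F(X)$ supplied for free by axiom 3 of Definition \ref{DefHTT}.

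For the cycle in part 1: (a) $\Rightarrow$ (d) is immediate from axiom 3 and the fact that $\cF$ is replete (so $X \in \cF$ implies $T \perp X$ for every $T \in \cT$). (d) $\Rightarrow$ (c) is obtained by specialising to $T = T(X) \in \cT$: the hypothesis forces $\Theta(t_X)$ to be a singleton, hence non-empty. (c) $\Rightarrow$ (b) is the key step: by the cokernel-dual of Lemma \ref{LemmaTrivOrth}.1 applied to the $\Theta$-cokernel $(F(X),f_X,\xi_X)$ of $t_X$, the assumption $\Theta(t_X) \neq \emptyset$ makes $f_X$ a split monomorphism, and since $T(X) \perp F(X)$ the splitting upgrades to a two-sided inverse, so $f_X$ is an isomorphism. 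Finally (b) $\Rightarrow$ (a) follows because $\cF$ is replete and contains $F(X)$.

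Part 2 runs along the same four steps with the roles swapped. (a) $\Rightarrow$ (d) again uses axiom 3 and repleteness of $\cT$; (d) $\Rightarrow$ (c) specialises to $F = F(X) \in \cF$; (c) $\Rightarrow$ (b) now applies Lemma \ref{LemmaTrivOrth}.1 directly to the $\Theta$-kernel $(T(X),t_X,\xi_X)$ of $f_X$, turning a split epimorphism into an isomorphism via $T(X) \perp F(X)$; (b) $\Rightarrow$ (a) uses repleteness of $\cT$.

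I do not expect a genuine obstacle here: once the two halves of the presentation are recognised as the inputs of Lemma \ref{LemmaTrivOrth}.1 and its dual, every implication reduces to one application of orthogonality or repleteness. The only bookkeeping subtlety is to remember, for each part, which side of the presentation (kernel or cokernel) supplies the needed isomorphism — but this is dictated by the symmetric shape of Definition \ref{DefHTT} and costs no real work.
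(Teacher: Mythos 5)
Your proposal is correct and follows the same route as the paper: the cycle (a)\,$\Rightarrow$\,(d)\,$\Rightarrow$\,(c)\,$\Rightarrow$\,(b)\,$\Rightarrow$\,(a), with the key step (c)\,$\Rightarrow$\,(b) handled exactly as in the text by applying Lemma \ref{LemmaTrivOrth}.1 (resp.\ its dual) together with the orthogonality $T(X) \perp F(X)$, and part 2 obtained by duality. The only superfluous detail is invoking repleteness of $\cF$ in (a)\,$\Rightarrow$\,(d), where axiom 3 of Definition \ref{DefHTT} already suffices.
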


\begin{proof}
1. (a) $\Rightarrow$ (d): This is point 3 in Definition \ref{DefHTT}. \\
(d) $\Rightarrow$ (c): If condition (d) holds, in particular $T(X) \perp X,$ so that $\Theta(t_X) = \{ \ast \} \neq \emptyset.$ \\
(c) $\Rightarrow$ (b): Since $T(X) \perp F(X)$ and since condition (c) holds, we can apply the dual of Lemma \ref{LemmaTrivOrth}.1 
to conclude that $f_X$ is an isomorphism. \\
(b) $\Rightarrow$ (a):  This follows from the fact that $\cF$ is replete. 
\end{proof} 

\begin{Proposition}\label{PropCarTFbis}
Let $(\cB,\Theta)$ be a category with nullhomotopies and $(\cT,\cF)$ a $\Theta$-torsion theory. Fix an object $X \in \cB$ 
and a $\Theta$-exact $(\cT,\cF)$-presentation of $X$ as in Definition \ref{DefHTT}. 
\begin{enumerate}
\item If $T(X)$ is $\Theta$-trivial, then $X \in \cF.$ The converse holds
\begin{enumerate}
\item if the $\Theta$-kernel $(T(X),t_X,\xi_X)$ is strong, or
\item if the ideal $\cZ_1(\Theta)$ is closed.
\end{enumerate} 
\item If $F(X)$ is $\Theta$-trivial, then $X \in \cT.$ The converse holds
\begin{enumerate}
\item if the $\Theta$-cokernel $(F(X),f_X,\xi_X)$ is strong, or
\item if the ideal $\cZ_1(\Theta)$ is closed.
\end{enumerate} 
\end{enumerate}
\end{Proposition}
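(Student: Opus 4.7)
The plan is to prove part (1) and then observe that part (2) follows by duality. The forward implications are direct, the converse (a) reduces to an already-established lemma, and the converse (b) is the only delicate point.

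For the forward direction of (1), assume $T(X)$ is $\Theta$-trivial. Then there is some $\lambda \in \Theta(\id_{T(X)})$, and $t_X \circ \lambda \in \Theta(t_X \cdot \id_{T(X)}) = \Theta(t_X)$. Hence $\Theta(t_X) \neq \emptyset$, so by Proposition \ref{PropCarTF}.1 we conclude $X \in \cF$. (This is essentially \ref{TextTrivOrth}.2 applied to the factorization $t_X = t_X \cdot \id_{T(X)}$.)

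For the converse under assumption (a), if $X \in \cF$, then by Proposition \ref{PropCarTF}.1 the arrow $f_X$ is an isomorphism. Since the $\Theta$-kernel $(T(X), t_X, \xi_X)$ of $f_X$ is strong, Lemma \ref{LemmaTrivOrth}.2 directly yields that $T(X)$ is $\Theta$-trivial.

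For the converse under assumption (b), assume $X \in \cF$ and $\cZ_1(\Theta)$ is closed. From Proposition \ref{PropCarTF}.1 we have $\Theta(t_X) \neq \emptyset$, so $t_X \in \cZ_1(\Theta)$. Closedness of the ideal provides a factorization $t_X = b \cdot a \colon T(X) \to Z \to X$ with $Z$ a $\Theta$-trivial object. By \ref{TextTrivOrth}.2, there exists $\varphi \in \Theta(f_X \cdot b)$, so the universal property of the $\Theta$-kernel produces a unique arrow $b' \colon Z \to T(X)$ with $t_X \cdot b' = b$ (and $\xi_X \circ b' = \varphi$). The main obstacle is to show that $b' \cdot a = \id_{T(X)}$, so that $T(X)$ becomes a retract of $Z$ and therefore $\Theta$-trivial by \ref{TextTrivOrth}.1. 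To this end, observe that both $b' \cdot a$ and $\id_{T(X)}$ satisfy $t_X \cdot (-) = t_X$, and both induce elements of $\Theta(f_X \cdot t_X)$ via $\xi_X \circ (-)$. Now the key point: because $T(X) \in \cT$ and $F(X) \in \cF$, the $\Theta$-orthogonality axiom in Definition \ref{DefHTT} forces $\Theta(f_X \cdot t_X)$ to be a singleton, so both candidates satisfy $\xi_X \circ (-) = \xi_X$. The uniqueness clause of the $\Theta$-kernel universal property (equivalently, Remark \ref{RemCanc}.1) then gives $b' \cdot a = \id_{T(X)}$, completing the argument.

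Finally, part (2) is obtained by dualizing the entire argument: replace $\Theta$-kernel by $\Theta$-cokernel, $t_X$ by $f_X$, and use Proposition \ref{PropCarTF}.2 in place of Proposition \ref{PropCarTF}.1; the dual of Lemma \ref{LemmaTrivOrth}.2 handles case (a), and the same retract construction, performed on the cokernel side, handles case (b).
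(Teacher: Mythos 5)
Your proof is correct and follows essentially the same route as the paper's: the forward implication via \ref{TextTrivOrth}.2 and Proposition \ref{PropCarTF}, case (a) via Lemma \ref{LemmaTrivOrth}.2 (strong (co)kernel of an isomorphism), and case (b) by exhibiting $T(X)$ (resp.\ $F(X)$) as a retract of a $\Theta$-trivial object through which the relevant arrow factors, using orthogonality of $T(X)$ and $F(X)$ plus the cancellation property of Remark \ref{RemCanc} to get the retraction identity. The only difference is that you write out part 1 and dualize to part 2, whereas the paper writes out part 2; this is immaterial since the whole setting is self-dual.
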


\begin{proof}
2. Assume that $F(X)$ is $\Theta$-trivial and let $\varphi \in \Theta(\id_{F(X)}).$ It follows that $\varphi \circ f_X \in \Theta(f_X).$
We can apply point 2(c) of Proposition \ref{PropCarTF} to conclude that $X \in \cT.$ \\
Conversely, assume first condition (a): if $X \in \cT,$ then $t_X \colon T(X) \to X$ is an isomorphism by \ref{PropCarTF}.2. 
Since we assume that the $\Theta$-cokernel $F(X)$ is strong, we can apply the dual of Lemma \ref{LemmaTrivOrth}.2 to conclude 
that $F(X)$ is $\Theta$-trivial.\\
Assume now condition (b): if $X \in \cT,$ then $t_X \colon T(X) \to X$ is an isomorphism by \ref{PropCarTF}.2. 
We have $\xi_X \circ t_X^{-1} \in \Theta(f_X),$ so that $f_X \in \cZ_1(\Theta).$ Since $\cZ_1(\Theta)$ is closed, there exists a factorization 
$f_X = v \cdot u \colon X \to Z \to F(X)$ through some $\Theta$-trivial object $Z$ (see \ref{TextCarTrivArr}). Since $\id_Z \in \cZ_1(\Theta),$ 
then also $u = \id_Z \cdot u \in \cZ_1(\Theta)$ and then there exists a nullhomotopy $\lambda \in \Theta(u).$ From the nullhomotopy 
$\lambda \circ t_X \in \Theta(u \cdot t_X)$ and the universal property of the $\Theta$-cokernel $F(X),$ we get a unique arrow $u' \colon F(X) \to Z$ 
such that $u' \cdot f_X = u$ and $u' \circ \xi_X = \lambda \circ t_X.$ 
Observe that $v \cdot u' \cdot f_X = v \cdot u = f_X.$ The condition $v \circ u' \cdot \xi_X = \xi_X$ comes for free because $T(X) \perp F(X),$ 
so that we can apply Remark \ref{RemCanc}.1 and we get $v \cdot u' = \id_{F(X)}.$ We have proved that $F(X)$ is a retract of $Z,$ so that 
$F(X)$ is $\Theta$-trivial.
\end{proof} 

\begin{Corollary}\label{CorClosRetr}
Let $(\cB,\Theta)$ be a category with nullhomotopies and $(\cT,\cF)$ a $\Theta$-torsion theory.
The subcategories $\cT$ and $\cF$ are closed under retracts.
\end{Corollary}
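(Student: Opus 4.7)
The plan is to use the characterization of membership in $\cT$ and $\cF$ via $\Theta$-orthogonality given by Proposition \ref{PropCarTF}, which reduces ``being in $\cT$'' to ``being left-orthogonal to every object of $\cF$'' (and dually). Since retractions interact well with precomposition of nullhomotopies, this reformulation turns the problem into essentially routine bookkeeping.

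Concretely, suppose $X$ is a retract of $T \in \cT$, with $s \colon X \to T$ and $r \colon T \to X$ satisfying $r \cdot s = \id_X$. To show $X \in \cT$, by Proposition \ref{PropCarTF}.2 (condition (d) $\Rightarrow$ (a)) it suffices to check that $X \perp F$ for every $F \in \cF$, i.e.\ that $\Theta(h)$ is a singleton for every arrow $h \colon X \to F$. Fix such an $h$. Since $T \perp F$, the set $\Theta(h \cdot r)$ is a singleton; denote its unique element by $\kappa$. First I would establish non-emptiness of $\Theta(h)$ by transporting $\kappa$ across $s$: the element $\kappa \circ s$ lies in $\Theta(h \cdot r \cdot s) = \Theta(h)$. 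For uniqueness, take any $\lambda, \mu \in \Theta(h)$; then $\lambda \circ r$ and $\mu \circ r$ both belong to $\Theta(h \cdot r) = \{\kappa\}$, so they coincide, whence
\[
\lambda \;=\; \lambda \circ \id_X \;=\; \lambda \circ (r \cdot s) \;=\; (\lambda \circ r) \circ s \;=\; (\mu \circ r) \circ s \;=\; \mu,
\]
using the associativity axiom in Definition \ref{DefNullHom}.

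The closure of $\cF$ under retracts follows by the dual argument, using Proposition \ref{PropCarTF}.1 (condition (d) $\Rightarrow$ (a)) and composing the relevant nullhomotopies on the other side. There is no serious obstacle: all that is used is the orthogonality clause of Definition \ref{DefHTT}, the characterization in Proposition \ref{PropCarTF}, and the functoriality of $-\circ-$ on nullhomotopies; no existence or universality of kernels/cokernels of the retract $X$ itself is invoked.
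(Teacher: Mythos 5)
Your proof is correct and follows essentially the same route as the paper's: both reduce the claim to Proposition \ref{PropCarTF} and then transport nullhomotopies along the section and retraction of the retract. The only difference is that you verify the orthogonality condition (d) directly (which requires checking both existence and uniqueness of the nullhomotopy), whereas the paper checks the non-emptiness condition (c), namely $\Theta(t_X) \neq \emptyset$, which needs only the existence half of your argument.
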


\begin{proof}
Let $F \in \cF$ and consider a retract $X$ of $F,$ with $a \colon X \to F$ and $b \colon F \to X$ such that 
$b \cdot a = \id_X.$ We have to prove that $X \in \cF.$ By Proposition \ref{PropCarTF}.1, it suffices to show that 
$\Theta(t_X) \neq \emptyset.$ Since $T(X) \perp F,$ there exists a (unique) nullhomotopy $\lambda \in \Theta(a \cdot t_X).$ 
Therefore, $b \circ \lambda \in \Theta(b \cdot a \cdot t_X) = \Theta(t_X),$ and we are done.
The argument for $\cT$ is dual.
\end{proof} 

\begin{Corollary}\label{CorCarTrivObj}
Let $(\cB,\Theta)$ be a category with nullhomotopies and $(\cT,\cF)$ a $\Theta$-torsion theory.
The following conditions on an object $X \in \cB$ are equivalent:
\begin{enumerate}
\item[(a)] $X \in \cT \cap \cF,$
\item[(b)] $\Theta(\id_X) = \{ \ast \},$
\item[(c)] $X$ is $\Theta$-trivial.
\end{enumerate}
\end{Corollary}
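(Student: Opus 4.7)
The plan is to set up the cycle (c)\,$\Rightarrow$\,(a)\,$\Rightarrow$\,(b)\,$\Rightarrow$\,(c), where the only non-trivial step is (c)\,$\Rightarrow$\,(a). All three arguments go through the $\Theta$-exact $(\cT,\cF)$-presentation of $X$ combined with the orthogonality axiom in Definition \ref{DefHTT} and with Proposition \ref{PropCarTF}.

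For (c)\,$\Rightarrow$\,(a), I would pick a nullhomotopy $\lambda\in\Theta(\id_X)$ given by $\Theta$-triviality. Fix a $\Theta$-exact presentation
$$\xymatrix{T(X)\ar[rr]_{t_X}\ar@/^1.8pc/@{-->}[rrrr]^{}_{\xi_X \;\Downarrow} & & X \ar[rr]_-{f_X} & & F(X)}$$
Then $\lambda\circ t_X\in\Theta(\id_X\cdot t_X)=\Theta(t_X)$ is a witness that $\Theta(t_X)\neq\emptyset$, and dually $f_X\circ\lambda\in\Theta(f_X)$ witnesses $\Theta(f_X)\neq\emptyset$. Applying Proposition \ref{PropCarTF}.1(c) to the first and Proposition \ref{PropCarTF}.2(c) to the second gives $X\in\cF$ and $X\in\cT$ respectively, so $X\in\cT\cap\cF$.

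For (a)\,$\Rightarrow$\,(b), if $X$ lies in both $\cT$ and $\cF$, then axiom 3 of Definition \ref{DefHTT} applied with $T=F=X$ gives $X\perp X$, which by definition of orthogonality in \ref{TextTrivOrth} means $\Theta(\id_X)$ is a singleton. Finally (b)\,$\Rightarrow$\,(c) is immediate: a singleton is non-empty, and that is literally the definition of $\Theta$-triviality in \ref{TextTrivOrth}.

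There is no real obstacle here; the statement is essentially a bookkeeping consequence of the orthogonality condition and of the characterization of $\cT$ and $\cF$ already proved in Proposition \ref{PropCarTF}. The only mild point to keep in mind is that we do \emph{not} need any strongness or closedness hypothesis (as in Proposition \ref{PropCarTFbis}), because both the $\cF$-membership and the $\cT$-membership of a $\Theta$-trivial $X$ are detected by the weakest clauses (c) in Proposition \ref{PropCarTF}, which hold in full generality.
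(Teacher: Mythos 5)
Your proposal is correct and follows essentially the same route as the paper: orthogonality gives (a)\,$\Rightarrow$\,(b), the implication (b)\,$\Rightarrow$\,(c) is immediate, and for (c)\,$\Rightarrow$\,(a) the key point is that a nullhomotopy on $\id_X$ whiskers to nullhomotopies on $t_X$ and $f_X$. The paper packages this last step via Lemma \ref{LemmaTrivOrth}.3 and its dual (concluding that $t_X$ and $f_X$ are isomorphisms and using repleteness), whereas you route it through clauses (c) of Proposition \ref{PropCarTF}; these rest on the same underlying computation, so the two arguments are interchangeable.
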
 

\begin{proof}
If $X \in \cT \cap \cF,$ then $X \perp X$ and the orthogonality condition gives $\Theta(\id_X) = \{ \ast \}.$ 
If $X$ is $\Theta$-trivial, we can apply Lemma \ref{LemmaTrivOrth}.3 and its dual to the $\Theta$-exact $(\cT,\cF)$-presentation 
of $X.$ We get that $t_X \colon T(X) \to X$ and $f_X \colon X \to F(X)$ are isomorphisms. Since $\cT$ and $\cF$ are replete, 
we are done. The implication (b) $\Rightarrow$ (c) is obvious. 
\end{proof} 

\begin{Corollary}\label{CorCarTF}
Let $(\cB,\Theta)$ be a category with nullhomotopies and $(\cT,\cF)$ a $\Theta$-torsion theory.
\begin{enumerate}
\item For any object $X \in \cF,$ it can be chosen a $\Theta$-exact $(\cT,\cF)$-presentation of the form
$$\xymatrix{T(X) \ar[r]^-{t_X} & X \ar[r]^-{\id} & X }$$
In particular, the arrow $\id_X$ has a $\Theta$-kernel whose object part lies in $\cT.$
\item For any object $X \in \cT,$ it can be chosen a $\Theta$-exact $(\cT,\cF)$-presentation of the form
$$\xymatrix{X \ar[r]^-{\id} & X \ar[r]^-{f_X} & F(X) }$$
In particular, the arrow $\id_X$ has a $\Theta$-cokernel whose object part lies in $\cF.$
\end{enumerate}
\end{Corollary}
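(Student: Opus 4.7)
The plan is to start from an arbitrary $\Theta$-exact $(\cT,\cF)$-presentation of $X$ and then ``absorb'' the component $f_X$ (respectively $t_X$) into an isomorphism, using Proposition~\ref{PropCarTF} to guarantee that this component is indeed invertible when $X \in \cF$ (respectively $X \in \cT$). The two cases being dual, I focus on point~1.

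Fix $X \in \cF$ and pick a $\Theta$-exact $(\cT,\cF)$-presentation
$$\xymatrix{T(X) \ar[rr]_{t_X} \ar@/^1.8pc/@{-->}[rrrr]^{}_{\xi_X \; \Downarrow} & & X \ar[rr]_-{f_X} & & F(X) }$$
By Proposition~\ref{PropCarTF}.1 (implication (a) $\Rightarrow$ (b)), the arrow $f_X$ is an isomorphism. Define
$$\xi'_X \;=\; f_X^{-1} \circ \xi_X \;\in\; \Theta(f_X^{-1} \cdot f_X \cdot t_X) \;=\; \Theta(t_X) \;=\; \Theta(\id_X \cdot t_X).$$
I claim that
$$\xymatrix{T(X) \ar[rr]_{t_X} \ar@/^1.8pc/@{-->}[rrrr]^{}_{\xi'_X \; \Downarrow} & & X \ar[rr]_-{\id_X} & & X }$$
is again a $\Theta$-exact $(\cT,\cF)$-presentation, with the same $\cT$-component $T(X)$ and with $\cF$-component equal to $X$ itself (which belongs to $\cF$ since $\cF$ is replete).

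For the $\Theta$-kernel property of $(T(X),t_X,\xi'_X)$ relative to $\id_X$, consider $h \colon W \to X$ and $\varphi \in \Theta(\id_X \cdot h) = \Theta(h)$. Then $f_X \circ \varphi \in \Theta(f_X \cdot h)$, so the universal property of the original $\Theta$-kernel of $f_X$ yields a unique $h' \colon W \to T(X)$ with $t_X \cdot h' = h$ and $\xi_X \circ h' = f_X \circ \varphi$. Precomposing the second equation with $f_X^{-1}$ in the sense of the nullhomotopy operation gives $\xi'_X \circ h' = \varphi$. Conversely, any $h''$ satisfying $t_X \cdot h'' = h$ and $\xi'_X \circ h'' = \varphi$ satisfies $\xi_X \circ h'' = f_X \circ \xi'_X \circ h'' = f_X \circ \varphi$, so $h'' = h'$ by uniqueness in the original kernel. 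Dually, the triple $(X,\id_X,\xi'_X)$ is a $\Theta$-cokernel of $t_X$: the universal factorization through $(X,\id_X,\xi'_X)$ is obtained from the one through $(F(X),f_X,\xi_X)$ by composing on the left with $f_X^{-1}$, and uniqueness is checked as above.

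Point~2 is strictly dual: now Proposition~\ref{PropCarTF}.2 gives that $t_X$ is an isomorphism, and one replaces $(T(X),t_X,\xi_X)$ with $(X,\id_X,\xi_X \circ t_X^{-1})$. The only delicate step in both cases is the routine verification that the nullhomotopy operations behave as expected under composition with an isomorphism, which amounts to a direct application of the associativity and unit axioms in Definition~\ref{DefNullHom}; there is no real obstacle, since no new universal property has to be constructed from scratch.
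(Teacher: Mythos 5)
Your proposal is correct and follows essentially the same route as the paper: both exploit that $f_X$ is an isomorphism (by Proposition \ref{PropCarTF}) and that $\Theta$-(co)kernels are determined up to isomorphism, replacing $f_X$ by $\id_X$ and transporting the nullhomotopy along $f_X^{-1}$. The paper states this in one sentence, whereas you carry out the (routine but correct) verification of both universal properties explicitly.
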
 

\begin{proof} 
1. Since the $\Theta$-cokernel is defined up to isomorphism and since $\cF$ is replete, in the $\Theta$-exact 
$(\cT,\cF)$-presentation of $X$ we can replace the $\cF$-part $f_X,$ which is an isomorphism by Proposition 
\ref{PropCarTF}, with $f_X^{-1} \cdot f_X.$
\end{proof} 

If we allow us to choose an object in an isomorphism class of objects, from Proposition \ref{PropUniqHTT} we 
get the following corollary.

\begin{Corollary}\label{CorReflect}
Let $(\cB,\Theta)$ be a category with nullhomotopies and $(\cT,\cF)$ a $\Theta$-torsion theory.
Then $\cT$ is coreflective in $\cB$ and $\cF$ is reflective in $\cB.$
\end{Corollary}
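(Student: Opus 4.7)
The plan is to prove reflectivity of $\cF$ in $\cB$; the coreflectivity of $\cT$ follows by a dual argument. Using Proposition \ref{PropUniqHTT} and the axiom of choice (as signaled in the sentence preceding the Corollary), I fix once and for all, for every object $X \in \cB$, a $\Theta$-exact $(\cT,\cF)$-presentation
$$\xymatrix{T(X) \ar[rr]_{t_X} \ar@/^1.8pc/@{-->}[rrrr]^{}_{\xi_X \; \Downarrow} & & X \ar[rr]_-{f_X} & & F(X).}$$
This yields a candidate object assignment $X \mapsto F(X)$ taking values in $\cF$, together with a candidate reflection unit $f_X \colon X \to F(X)$. The task is to verify the universal property: for every arrow $g \colon X \to F$ with $F \in \cF$, there exists a unique $g' \colon F(X) \to F$ with $g' \cdot f_X = g$.

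For existence, I exploit the orthogonality axiom of Definition \ref{DefHTT}.3. Since $T(X) \in \cT$ and $F \in \cF$, the set $\Theta(g \cdot t_X)$ is a singleton; let $\tau$ be its unique element. Then $(g,\tau)$ is compatible with the $\Theta$-cokernel $(F(X),f_X,\xi_X)$ of $t_X$ (which exists as a $\Theta$-cokernel because the chosen presentation is $\Theta$-exact), and its universal property provides a unique arrow $g' \colon F(X) \to F$ satisfying the two equations $g' \cdot f_X = g$ and $g' \circ \xi_X = \tau.$ In particular, the first equation gives the required factorization.

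For uniqueness, suppose $g'' \colon F(X) \to F$ also satisfies $g'' \cdot f_X = g.$ Then $g'' \circ \xi_X$ lies in $\Theta(g'' \cdot f_X \cdot t_X) = \Theta(g \cdot t_X)$, which as noted is the singleton $\{\tau\}$, so automatically $g'' \circ \xi_X = \tau = g' \circ \xi_X$. The dual of the cancellation property of Remark \ref{RemCanc}.1 for homotopy cokernels then forces $g'' = g'.$ Functoriality of $F \colon \cB \to \cF$ and naturality of $f_{(-)}$ follow in the standard way by applying this universal property to composites $f_Y \cdot g$.

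The only potentially subtle point is the uniqueness step: the universal property of the $\Theta$-cokernel yields uniqueness only under the \emph{two} conditions $g'' \cdot f_X = g$ and $g'' \circ \xi_X = \tau$, whereas a reflection unit requires uniqueness under the single equation $g'' \cdot f_X = g$. The orthogonality condition $T(X) \perp F$ is exactly what bridges this gap, forcing the nullhomotopy condition to hold automatically. Thus the argument is essentially a repackaging of the existence and orthogonality clauses of Definition \ref{DefHTT}, in the same spirit as the final step in the proof of Proposition \ref{PropCharactPrepointed}.
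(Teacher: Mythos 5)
Your proposal is correct and follows essentially the same route as the paper: choose a $\Theta$-exact $(\cT,\cF)$-presentation for each object, take $f_X$ as the unit, use orthogonality $T(X)\perp F$ to produce the unique nullhomotopy feeding the $\Theta$-cokernel's universal property for existence, and use the singleton property of $\Theta(g\cdot t_X)$ together with the cancellation (uniqueness) clause of the $\Theta$-cokernel for uniqueness. The subtlety you flag at the end is exactly the point the paper's proof also turns on.
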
 

\begin{proof} 
For any object $X \in \cB,$ just choose a $\Theta$-exact $(\cT,\cF)$-presentation
$$\xymatrix{T(X) \ar[rr]_{t_X} \ar@/^1.8pc/@{-->}[rrrr]^{}_{\xi_X \; \Downarrow} & & X \ar[rr]_-{f_X} & & F(X) }$$
Then $t_X \colon T(X) \to X$ is the counit of the coreflection of $\cB$ on $\cT.$ Dually, $f_X \colon X \to F(X)$ is 
the unit of the reflection of $\cB$ on $\cF.$  To extend to arrows, use the universal properties of the $\Theta$-kernel 
and of the $\Theta$-cokernel involved in the $\Theta$-exact $(\cT,\cF)$-presentations. For example, given 
an arrow $g \colon X \to Y,$ we get an arrow $f_Y \cdot g \cdot t_X \colon T(X) \to F(Y).$ Since $T(X) \perp F(Y),$ 
there exists a unique nullhomotopy $\lambda_g \in \Theta(f_Y \cdot g \cdot t_X).$ Now, the universal property of the
$\Theta$-cokernel of $t_X$ gives a unique arrow $F(g) \colon F(X) \to F(Y)$ such that $F(g) \cdot f_X = f_Y \cdot g$
and $F(g) \circ \xi_X = \lambda_g.$ The needed functoriality and naturality come from uniqueness. 
As far as the universal properties of the unit and of the counit are concerned, let us check the one of the unit. Consider an 
arrow $h \colon X \to F$ with $F \in \cF.$ Since $T(X) \perp F,$ there exists a unique $\lambda \in \Theta(h \cdot t_X).$
The universal property of the $\Theta$-cokernel gives a unique arrow $h' \colon F(X) \to F$ such that $h' \cdot f_X = h$
and $h' \circ \xi_X = \lambda.$ If another arrow $h'' \colon F(X) \to F$ is such that $h'' \cdot f_X = h,$ then 
$h'' \circ \xi_X \in \Theta(h \cdot t_X),$ which is a singleton. Therefore, $h'' \circ \xi_X = \lambda$ and, finally, $h'' = h'.$
\end{proof} 

\begin{Example}\label{ExCExNSub}{\rm 
To end this section, we go back to the structure of nullhomotopies $\cH(\Grp)$ in the category $\XMod$ introduced in Example \ref{ExXMod}.
Given a crossed module $(X,x,X_0,\ast),$ we can consider the factorization of $x$ through its image $I(x)$ as in the following diagram
$$\xymatrix{X \ar[rr]^-{x} \ar[rd]_{e_x} & & X_0 \\ & I(x) \ar[ru]_{m_x} }$$
It is well-known that $m_x \colon I(x) \to X_0$ is a normal subgroup, so that we get a crossed module $(I(x),m_x,X_0,\conj)$ with $X_0$ 
acting on $I(x)$ by conjugation. Moreover, $e_x \colon X \to I(x)$ is a central extension, that is, a surjective morphism with central kernel. 
We get another crossed module $(X,e_x,I(x),\ast)$ with $I(x)$ acting on $X$ with action $a \ast b = c \cdot b \cdot c^{-1},$ where $c$ is any 
element of $X$ such that $e_x(c)=a.$ We are ready to produce our first example of homotopy torsion theory. It is given by the following full
 subcategories of $\XMod \colon$
$$\cT = \mbox{the full subcategory of central extensions, i.e., surjective crossed modules}$$
$$\cF = \mbox{the full subcategory of normal subgroups, i.e., injective crossed modules}$$
For a crossed module $(X,x,X_0,\ast),$ its $\cH(\Grp)$-exact $(\cT,\cF)$-presentation is given by
$$\xymatrix{X \ar[rr]^-{\id} \ar[d]_{e_x} & & X \ar[rr]^-{e_x} \ar[d]_<<<{x} & & I(x) \ar[d]^{m_x} \\
I(x) \ar[rr]_-{m_x} \ar[rrrru]^<<<<<<<<<<{\id} & & X_0 \ar[rr]_-{\id} & & X_0 }$$
Note that the $\cH(\Grp)$-exactness is as in Examples \ref{ExArrHKHC} and \ref{ExHExt}.
}\end{Example} 

\section{Comparison with pretorsion theories}\label{SecPretorsion}

The aim of this section is to establish a complete comparison between pretorsion theories and homotopy torsion theories. 
The first step is to compare kernels and cokernels relative to an ideal (called prekernels and precokernels in \cite{FF, FFG}) 
with homotopy kernels and homotopy cokernels. 
(We do the job for kernels and we leave to the reader to dualize Definition \ref{DefZK}, Lemma \ref{LemmaZHKer} and Remark \ref{RemDiscrHK}.)
Then, we will compare pretorsion theories and homotopy torsion theories 
in Corollary \ref{CorZHTT} and Corollary \ref{CorHTTpreTTbis}. We recall from \cite{GJ, FF} the following definition.

\begin{Definition}\label{DefZK}{\rm
Let $\cZ_1$ be an ideal of arrows in a category $\cB.$ Fix an arrow $g \colon X \to Y.$ 
A $\cZ_1$-{\it kernel} of $g$ is an arrow $k_g \colon K(g) \to X$ such that $g \cdot k_g \in \cZ_1$ and which is universal with respect to this condition: 
if $f \colon W \to X$ satisfies $g \cdot f \in \cZ_1,$ then there exists a unique $f' \colon W \to K(g)$ such that $k_g \cdot f' = f.$
}\end{Definition}

Here is the expected result, whose proof is straightforward.

\begin{Lemma}\label{LemmaZHKer}
Let $(\cB,\Theta)$ be a category with nullhomotopies.
If $\Theta$ is discrete, then
$\Theta$-kernels in the sense of \ref{DefHKer} coincide with $\cZ_1(\Theta)$-kernels in the sense of \ref{DefZK},
\end{Lemma}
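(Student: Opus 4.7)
The plan is a direct bookkeeping verification, exploiting the fact that when $\Theta$ is discrete every nullhomotopy is uniquely determined by the mere fact of its existence, so the two-layer data of a $\Theta$-kernel collapses to one-layer data.

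First I would translate the data of a $\Theta$-kernel of $g \colon X \to Y$: it is a triple $(\cN(g), n_g, \nu_g)$ with $\nu_g \in \Theta(g \cdot n_g)$. Since $\Theta$ is discrete, to give $\nu_g$ amounts precisely to the condition $\Theta(g \cdot n_g) \neq \emptyset$, that is, to $g \cdot n_g \in \cZ_1(\Theta)$. Hence the underlying data of a $\Theta$-kernel reduce to a pair $(\cN(g), n_g \colon \cN(g) \to X)$ with $g \cdot n_g \in \cZ_1(\Theta)$, which matches exactly the underlying data of a $\cZ_1(\Theta)$-kernel.

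Next I would check that the two universal properties coincide under this translation. Given a test arrow $f \colon W \to X$ with $g \cdot f \in \cZ_1(\Theta)$, the set $\Theta(g \cdot f)$ is a singleton containing a unique nullhomotopy $\varphi$. The $\Theta$-kernel universal property demands a unique $f' \colon W \to \cN(g)$ with $n_g \cdot f' = f$ and $\nu_g \circ f' = \varphi$; but the second equation is automatic, since both $\nu_g \circ f'$ and $\varphi$ belong to $\Theta(g \cdot n_g \cdot f') = \Theta(g \cdot f)$, which is a singleton. Conversely, any $\cZ_1(\Theta)$-kernel manifestly satisfies the $\Theta$-kernel universal property by the same observation applied in reverse, producing the unique $\nu_g$ on $g \cdot n_g$ as well as the required $f'$.

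There is really no obstacle here: the only content is the observation that discreteness collapses the compatibility equation $\nu_g \circ f' = \varphi$ to a vacuous one, which is essentially the same phenomenon recorded in Remark \ref{RemCanc}.2. I would mention this explicitly in the write-up to orient the reader, and then the proof reduces to two lines.
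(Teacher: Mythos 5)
Your proposal is correct and is precisely the straightforward verification the paper has in mind (the paper omits the proof, calling it straightforward): discreteness makes the choice of $\nu_g$ equivalent to the condition $g \cdot n_g \in \cZ_1(\Theta)$, and makes the compatibility equation $\nu_g \circ f' = \varphi$ automatic, so the two universal properties literally coincide. Your pointer to Remark \ref{RemCanc}.2 as the same phenomenon is apt.
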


\begin{Remark}\label{RemDiscrHK}{\rm 
Observe that the above fact is no longer true if we start with a structure of nullhomotopies $\Theta$ on $\cB$ which is not discrete: 
if we assume that the $\Theta$-kernel and the $\cZ_1(\Theta)$-kernel of an arrow exist, then the $\cZ_1(\Theta)$-kernel is a retract 
of the $\Theta$-kernel, but in general they do not coincide. 
}\end{Remark}

\begin{Text}\label{TextZTPreT}{\rm
To make easier the comparison between homotopy torsion theories and pretorsion theories, we write down explicitly the intermediate notion of 
$\cZ_1$-torsion theory for $\cZ_1$ an ideal of arrows. In fact, a pair $(\cT,\cF)$ of full subcategories of a category $\cB$ is a pretorsion theory 
in the sense of \cite{FF,FFG} precisely when it is a $\cZ_1$-torsion theory for the closed ideal $\cZ_1 = i(\ob(\cT \cap \cF)).$
Let us observe also that, if $\cZ_1$ is closed, then $\cZ_1$-torsion theories precisely are torsion theories in multi-pointed categories
in the sense of \cite{GJ}.
}\end{Text} 

\begin{Definition}\label{DefZTT}{\rm
Let $\cZ_1$ be an ideal of arrows in a category $\cB.$
A $\cZ_1$-{\it torsion theory} in $\cB$ is given by two full replete subcategories $\cT$ and $\cF$ of $\cB$ such that
\begin{enumerate}
\item For any object $X \in \cB,$ there exists a diagram of the form
$$\xymatrix{T(X) \ar[rr]^-{t_X} & & X \ar[rr]^-{f_X} & & F(X) }$$
where $T(X) \in \cT, F(X) \in \cF, t_X$ is a $\cZ_1$-kernel of $f_X$ and $f_x$ is a $\cZ_1$-cokernel of $t_X.$
\item For any object $T \in \cT$ and $F \in \cF,$ any arrow $h \colon T \to F$ belongs to $\cZ_1.$
\end{enumerate}
}\end{Definition}

\begin{Remark}\label{RemPointed}{\rm
If the category $\cB$ has a zero object 0 and if we consider the ideal $\cZ_1(0)$ of zero arrows (that is, the arrows 
which factorize through the zero object), then $\cZ_1(0)$-kernels and $\cZ_1(0)$-cokernels coincide with kernels and 
cokernels in the usual sense. Moreover, $\cZ_1(0)$-torsion theories are nothing but the classical (not necessarily 
abelian) torsion theories as in Chapter 1 of \cite{BO2}.
}\end{Remark} 

With the next two corollaries, we complete the comparison between homotopy torsion theories and pretorsion theories. 
The first one follows easily from Lemma \ref{LemmaZHKer}.

\begin{Corollary}\label{CorZHTT}
Let $(\cB,\Theta)$ be a category with nullhomotopies. 
Consider two full subcategories $\cT$ and $\cF$ of $\cB.$ If $\Theta$ is discrete, then $(\cT,\cF)$ is a 
$\Theta$-torsion theory in the sense of \ref{DefHTT} if and only if it is a $\cZ_1(\Theta)$-torsion theory in the sense of \ref{DefZTT}.
\end{Corollary}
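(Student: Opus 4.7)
The plan is to observe that when $\Theta$ is discrete, every piece of data involved in Definition \ref{DefHTT} is in bijective correspondence with the corresponding piece of data in Definition \ref{DefZTT}. Specifically, for any arrow $g$ in $\cB$ one has $\Theta(g) = \{\ast\}$ whenever $g \in \cZ_1(\Theta)$ and $\Theta(g) = \emptyset$ otherwise; in particular, nullhomotopies are uniquely determined when they exist. I would use this to translate the conditions of one definition into the other, invoking Lemma \ref{LemmaZHKer} (and its evident dual for $\Theta$-cokernels) at each step.

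First I would fix the common repleteness assumption on $\cT$ and $\cF,$ which does not involve $\Theta$ at all. Then for the forward implication, starting from a $\Theta$-torsion theory, the $\Theta$-exact presentation
$$\xymatrix{T(X) \ar[rr]_{t_X} \ar@/^1.5pc/@{-->}[rrrr]^{}_{\xi_X \; \Downarrow} & & X \ar[rr]_-{f_X} & & F(X) }$$
gives, by Lemma \ref{LemmaZHKer} applied to $f_X$ and by its dual applied to $t_X,$ a diagram in which $t_X$ is a $\cZ_1(\Theta)$-kernel of $f_X$ and $f_X$ is a $\cZ_1(\Theta)$-cokernel of $t_X.$ For orthogonality: given $T \in \cT, F \in \cF$ and $h \colon T \to F,$ the condition $T \perp F$ gives $\Theta(h) = \{\ast\} \neq \emptyset,$ hence $h \in \cZ_1(\Theta).$

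For the reverse implication, starting from a $\cZ_1(\Theta)$-torsion theory with diagram $T(X) \xrightarrow{t_X} X \xrightarrow{f_X} F(X),$ we need to upgrade this to a $\Theta$-exact presentation by supplying a single nullhomotopy $\xi_X \in \Theta(f_X \cdot t_X)$ that simultaneously turns $(T(X),t_X,\xi_X)$ into a $\Theta$-kernel of $f_X$ and $(F(X),f_X,\xi_X)$ into a $\Theta$-cokernel of $t_X.$ Since $t_X$ is a $\cZ_1(\Theta)$-kernel of $f_X,$ we have $f_X \cdot t_X \in \cZ_1(\Theta);$ discreteness of $\Theta$ then forces $\Theta(f_X \cdot t_X)$ to be a singleton, from which $\xi_X$ is uniquely determined. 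The $\Theta$-kernel and $\Theta$-cokernel universal properties then follow from Lemma \ref{LemmaZHKer} (and its dual), using once more that all nullhomotopy conditions appearing in the universal property are automatic in the discrete case. Finally, orthogonality follows because any $h \colon T \to F$ lies in $\cZ_1(\Theta),$ so $\Theta(h)$ is a (necessarily unique) singleton.

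There is essentially no hard step. The only point that requires a moment of attention is the coherence of the single nullhomotopy $\xi_X$ shared between the kernel and cokernel parts of the presentation in the reverse direction, and this is resolved immediately by the uniqueness forced by discreteness. Everything else is bookkeeping via Lemma \ref{LemmaZHKer}.
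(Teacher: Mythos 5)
Your proposal is correct and follows exactly the route the paper intends: the paper gives no written proof beyond the remark that the corollary ``follows easily from Lemma \ref{LemmaZHKer},'' and your argument is precisely the elaboration of that remark, translating kernels/cokernels via Lemma \ref{LemmaZHKer} and its dual and using discreteness to identify the orthogonality conditions and to supply the unique nullhomotopy $\xi_X$ in the reverse direction.
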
 

\begin{Corollary}\label{CorHTTpreTTbis}
Let $\cZ_1$ be a closed ideal of arrows in a category $\cB.$ If a pair of full subcategories $(\cT,\cF)$ of $\cB$ is a $\cZ_1$-torsion theory, 
then $i(\ob(\cT \cap \cF)) = \cZ_1,$ so that $(\cT,\cF)$ is also a pretorsion theory. 
\end{Corollary}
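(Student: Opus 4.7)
The plan is to prove the equality $i(\ob(\cT \cap \cF)) = \cZ_1$, after which the fact that $(\cT,\cF)$ is a pretorsion theory follows immediately from the characterization recalled in \ref{TextZTPreT}, which identifies pretorsion theories with $\cZ_1$-torsion theories for the closed ideal $\cZ_1 = i(\ob(\cT \cap \cF))$.

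For the easy inclusion $i(\ob(\cT \cap \cF)) \subseteq \cZ_1$, I would take any $N \in \cT \cap \cF$ and apply Condition 2 of Definition \ref{DefZTT} with $T = N$, $F = N$ and $h = \id_N$: this gives $\id_N \in \cZ_1$. Any $g \in i(\ob(\cT \cap \cF))$ admits a factorization $g = h \cdot f \colon X \to N \to Y$ with $N \in \cT \cap \cF$, hence $g = h \cdot \id_N \cdot f$ lies in $\cZ_1$ by the ideal property. Note that closedness of $\cZ_1$ is not needed for this direction.

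For the reverse inclusion $\cZ_1 \subseteq i(\ob(\cT \cap \cF))$, let $g \in \cZ_1$. Closedness of $\cZ_1$ provides a factorization $g = h \cdot f \colon X \to N \to Y$ where $N$ is $\cZ_1$-trivial, so it suffices to show that every $\cZ_1$-trivial object belongs to $\cT \cap \cF$. To accomplish this, I would view $(\cT,\cF)$ as a $\Theta$-torsion theory for the discrete structure $\Theta = \Theta_{\cZ_1}$ via Corollary \ref{CorZHTT}; under this correspondence the $\Theta$-trivial objects coincide with the $\cZ_1$-trivial ones, and Corollary \ref{CorCarTrivObj} identifies these precisely with the objects of $\cT \cap \cF$. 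Hence $N \in \cT \cap \cF$ and $g \in i(\ob(\cT \cap \cF))$.

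The only delicate point is the reverse inclusion, and even there the argument is an assembly step rather than a calculation: it combines closedness of $\cZ_1$ (to produce a factorization through a trivial object) with the characterization of $\Theta$-trivial objects as elements of $\cT \cap \cF$. If one prefers a self-contained argument avoiding the detour through Corollary \ref{CorCarTrivObj}, the same conclusion can be reached directly: for a $\cZ_1$-trivial $N$ with $\cZ_1$-exact presentation $T(N) \to N \to F(N)$, the relations $f_N = f_N \cdot \id_N \in \cZ_1$ and $t_N = \id_N \cdot t_N \in \cZ_1$ together with Proposition \ref{PropCarTF} (discrete version) force $N \in \cT$ and $N \in \cF$.
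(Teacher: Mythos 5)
Your proposal is correct and follows essentially the same route as the paper: both directions ultimately rest on closedness of $\cZ_1$ together with the identification, via Corollaries \ref{CorZHTT} and \ref{CorCarTrivObj}, of the $\cZ_1$-trivial objects with the objects of $\cT \cap \cF$. Your splitting into two inclusions (and the observation that closedness is only needed for $\cZ_1 \subseteq i(\ob(\cT \cap \cF))$) is a slightly finer presentation of the paper's single-step characterization, but not a different argument.
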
 

\begin{proof}
The arrows in $\cZ_1$ are precisely the arrows which factorize  
through some $\cZ_1$-trivial object. The arrows in $i(\ob(\cT \cap \cF))$ are precisely the arrows which factorize through some object in
$\cT \cap \cF.$ But being $\cZ_1$-trivial is the same thing as being $\Theta_{\cZ_1}$-trivial, and being $\Theta_{\cZ_1}$-trivial is, by Corollary 
\ref{CorCarTrivObj}, the same thing as being in $\cT \cap \cF.$ This shows that $i(\ob(\cT \cap \cF)) = \cZ_1,$ so that $(\cT,\cF)$ is a
$i(\ob(\cT \cap \cF))$-torsion theory. By  \ref{TextZTPreT}, we can conclude that $(\cT,\cF)$ is a pretorsion theory.
\end{proof} 

\begin{Text}\label{TextFinPreTT}{\rm
Thanks to Lemma \ref{LemmaNullId} and Lemma \ref{LemmaZHKer}, we can identify the discrete structures of nullhomotopies with the 
corresponding ideals of arrows. We can now resume the analysis developed in this section saying that pretorsion theories are exactly 
homotopy torsion theories with respect to structures which are discrete and closed. 
}\end{Text}

\section{Factorization systems are homotopy torsion theories}\label{SecFSHTT}
 
Factorization systems have a long history in category theory. We refer to Chapter 5 in \cite{BO1} for the classical theory, and to \cite{CLF} 
and the references therein for a more recent update. We adopt the name of orthogonal factorization system (the word orthogonal does not 
appear in the name used in \cite{BO1}) to underline the presence of the orthogonality condition in the definition.

\begin{Definition}\label{DefFS}{\rm
Let $\cA$ be a category. An {\it orthogonal factorization system} in $\cA$ is given by two classes of arrows
$$\cE \subseteq \arr(\cA) \,,\; \cM \subseteq \arr(\cA)$$
such that
\begin{enumerate}
\item[1)] Both classes are stable under composition with isomorphisms.
\item[2)] Each arrow of $\cA$ can be factorized as an arrow in $\cE$ followed by an arrow in $\cM.$
\item[3)] (Orthogonality) For each solid commutative diagram with $e \in \cE$ and $m \in \cM$
$$\xymatrix{E \ar[r]^{h} \ar[d]_{e} & M \ar[d]^{m} \\ E_0 \ar[r]_{h_0} \ar@{-->}[ru]^{\lambda} & M_0}$$
there exists a unique arrow $\lambda$ such that $\lambda \cdot e = h$ and $m \cdot \lambda = h_0.$
(Sometimes, the notation $e \perp m$ is used to express this condition.)
\end{enumerate}
}\end{Definition}

\begin{Text}\label{TextFS}{\rm
Observe that Definition \ref{DefFS}, which appears in \cite{GT}, seems weaker than the one in \cite{BO1}, 
where it is required that $\cE$ and $\cM$ contain isomorphisms and are closed under composition. 
If, in Definition \ref{DefFS}, we add that $\cE$ and $\cM$ contain identities, we get precisely the definition of 
orthogonal factorization system given in \cite{CJKP}, and it is known that the definitions in \cite{BO1} 
and \cite{CJKP} are equivalent. The fact that Definition \ref{DefFS} is in fact equivalent to the one given in 
\cite{CJKP} can be checked directly, but can also be seen as an obvious consequence of the correspondence 
between orthogonal factorization systems in $\cA$ and $\cH(\cA)$-torsion theories in $\Arr(\cA),$ see Corollary
 \ref{CorOFSid}.
}\end{Text}

The following lemma (and its dual), inspired by the description of homotopy (co)kernels in $\Arr(\cA)$ 
given in Example \ref{ExArrHKHC}, will allow us to state one of our main results, Proposition \ref{PropOFSHTT}, 
without assuming the existence of pullbacks or pushouts in $\cA.$

\begin{Lemma}\label{LemmaIso} 
Consider an adjunction 
$$\xymatrix{\cA \ar@<-0.5ex>[rr]_{\cU} & & \cB \ar@<-0.5ex>[ll]_{\cC} } ,\; \cC \dashv \cU$$
with unit $\gamma_B \colon B \to \cU\cC B$ and counit $\delta_A \colon \cC\cU A \to A$ and assume that $\cU$ is
full and faithful. Put on $\cB$ the structure of nullhomotopies $\Theta_{\gamma}$ as in \ref{TextRad}.2. If
$$\xymatrix{X \ar[rr]_{g} \ar@/^1.8pc/@{-->}[rrrr]^{}_{\theta_g \; \Downarrow} & & Y \ar[rr]_-{q_g} & & \cQ(g) }$$
is a $\Theta_{\gamma}$-cokernel, then the arrow $\cC(q_g) \colon \cC Y \to \cC\cQ(g)$ is an isomorphism. 
\end{Lemma}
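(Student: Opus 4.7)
The plan is to construct an inverse of $\cC(q_g)$ in $\cA$ using the universal property of the $\Theta_\gamma$-cokernel. Recall from \ref{TextRad}.2 that an element of $\Theta_\gamma(h\colon X\to Y)$ is an arrow $\varphi\colon \cU\cC X\to Y$ with $\varphi\cdot\gamma_X=h$, and that in this structure the operation $f'\circ\theta_g$ reduces to the plain composite $f'\cdot\theta_g$. The universal property of $(\cQ(g),q_g,\theta_g)$ therefore reads: for every pair $(f\colon Y\to W,\,\varphi\colon \cU\cC X\to W)$ with $\varphi\cdot\gamma_X=f\cdot g$, there is a unique $f'\colon \cQ(g)\to W$ satisfying $f'\cdot q_g=f$ and $f'\cdot\theta_g=\varphi$.

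The naturality square $\cU\cC(g)\cdot\gamma_X=\gamma_Y\cdot g$ provides such a pair with $W=\cU\cC Y$, $f=\gamma_Y$ and $\varphi=\cU\cC(g)$, yielding a unique arrow $p\colon \cQ(g)\to\cU\cC Y$ with
\[
p\cdot q_g=\gamma_Y \qquad\text{and}\qquad p\cdot\theta_g=\cU\cC(g).
\]
Let $u\colon \cC\cQ(g)\to\cC Y$ be the adjoint transpose of $p$ under $\cC\dashv\cU$, characterized by $p=\cU(u)\cdot\gamma_{\cQ(g)}$. I claim $u$ is a two-sided inverse of $\cC(q_g)$.

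For the relation $u\cdot\cC(q_g)=\id_{\cC Y}$: combining $p=\cU(u)\cdot\gamma_{\cQ(g)}$ with naturality of $\gamma$ at $q_g$ and with $p\cdot q_g=\gamma_Y$ produces $\cU(u\cdot\cC(q_g))\cdot\gamma_Y=\gamma_Y=\cU(\id_{\cC Y})\cdot\gamma_Y$, and the adjunction bijection $\cA(\cC Y,\cC Y)\cong \cB(Y,\cU\cC Y)$ forces $u\cdot\cC(q_g)=\id_{\cC Y}$. For $\cC(q_g)\cdot u=\id_{\cC\cQ(g)}$, the same adjunction trick reduces the claim to the equation $\cU\cC(q_g)\cdot p=\gamma_{\cQ(g)}$ in $\cB(\cQ(g),\cU\cC\cQ(g))$. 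I plan to verify this via the uniqueness clause of the cokernel universal property: post-composing both candidate arrows with $q_g$ gives $\gamma_{\cQ(g)}\cdot q_g$ (using naturality of $\gamma$ and $p\cdot q_g=\gamma_Y$), and post-composing with $\theta_g$ gives, respectively, $\cU\cC(q_g)\cdot\cU\cC(g)=\cU\cC(q_g\cdot g)$ and $\gamma_{\cQ(g)}\cdot\theta_g$.

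The main obstacle is the residual identity $\gamma_{\cQ(g)}\cdot\theta_g=\cU\cC(q_g\cdot g)$. Naturality of $\gamma$ at $\theta_g$ rewrites the left-hand side as $\cU\cC(\theta_g)\cdot\gamma_{\cU\cC X}$, while applying $\cU\cC$ to $\theta_g\cdot\gamma_X=q_g\cdot g$ rewrites the right-hand side as $\cU\cC(\theta_g)\cdot\cU\cC(\gamma_X)$. The question therefore collapses to showing $\gamma_{\cU\cC X}=\cU\cC(\gamma_X)$, and this is precisely where the hypothesis that $\cU$ is full and faithful enters: it makes every counit component $\delta_A$ invertible, so the two triangle identities yield $\cC(\gamma_X)=\delta_{\cC X}^{-1}$ and $\gamma_{\cU\cC X}=\cU(\delta_{\cC X})^{-1}$, and both sides then coincide with $\cU(\delta_{\cC X}^{-1})$. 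Equivalently, the monad $\cU\cC$ is idempotent, which is what is genuinely needed for the argument to close.
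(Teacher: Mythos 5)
Your proof is correct and follows essentially the same route as the paper's: the same arrow $p$ (called $h$ there) obtained from the universal property applied to the naturality square of $\gamma$, the same key equation $\cU\cC(q_g)\cdot p=\gamma_{\cQ(g)}$ verified via the cancellation property of the cokernel, and the same identity $\gamma_{\cU\cC X}=\cU\cC(\gamma_X)$ deduced from full-faithfulness through the triangle identities. The only difference is cosmetic: you produce the two-sided inverse directly as the adjoint transpose of $p$, whereas the paper first shows $\cC(q_g)$ is a split mono, deduces that $\cC\cU\cC(q_g)$ is invertible, and transfers back along the naturality square of $\delta$.
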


\begin{proof}
Since, by naturality of $\gamma,$ we have $\gamma_Y \cdot g = \cU\cC(g) \cdot \gamma_X,$ we can 
look at the arrow $\cU\cC(g) \colon \cU\cC X \to \cU\cC Y$ as a nullhomotopy on $\gamma_Y \cdot g.$ 
By the universal property of the $\Theta_{\gamma}$-cokernel of $g,$ we get a unique arrow $h \colon \cQ(g) \to \cU\cC Y$ 
such that $h \cdot q_g = \gamma_Y$ and $h \circ \theta_g = \cU\cC(g),$ that is, $h \cdot \theta_g = \cU\cC(g).$
From the first condition on $h,$ and using one of the triangular identities, it follows that $\cC(q_g)$ is a split mono:
$$\delta_{\cC Y} \cdot \cC(h) \cdot \cC(q_g) = \delta_{\cC Y} \cdot \cC(\gamma_Y) = \id_{\cC Y}$$
Consider now the diagram
$$\xymatrix{X \ar[rr]_{g} \ar@/^1.8pc/@{-->}[rrrr]^{}_{\theta_g \; \Downarrow} & & Y \ar[rr]_-{q_g} 
& & \cQ(g) \ar[rr]_-{h} \ar@/^1.8pc/[rrrr]^-{\gamma_{\cQ(g)}} & & \cU\cC Y \ar[rr]_-{\cU\cC(q_g)} & & \cU\cC\cQ(g) }$$ 
and observe that
$$\cU\cC(q_g) \cdot h \cdot q_g = \cU\cC(q_g) \cdot \gamma_Y = \gamma_{\cQ(g)} \cdot q_g$$
$$\cU\cC(q_g) \cdot h \cdot \theta_g = \cU\cC(q_g) \cdot \cU\cC(g) = \cU\cC(\theta_g) \cdot \cU\cC(\gamma_X) = 
\cU\cC(\theta_g) \cdot \gamma_{\cU\cC X} = \gamma_{\cQ(g)} \cdot \theta_g$$
(Note that $\cU\cC(\gamma_X) = \gamma_{\cU\cC X}$ because, by the triangular identities, both are left-inverse
of $\cU(\delta_{\cC X})$ which is an isomorphism since $\cU$ is full and faithful.) Therefore, we can use Remark 
\ref{RemCanc} to deduce that $\cU\cC(q_g) \cdot h = \gamma_{\cQ(g)}.$ If we apply now the functor $\cC,$ we obtain that
$\cC\cU\cC(q_g) \cdot \cC(h) = \cC(\gamma_{\cQ(g)}).$ Since $\cC(\gamma_{\cQ(g)})$ is an isomorphism (it is left-inverse
of the isomorphism $\delta_{\cC\cQ(g)}$), this equation implies that $\cC\cU\cC(q_g)$ is a split epi. But it is also a split mono
(because  $\cC(q_g)$ is a split mono) and then it is an isomorphism. Finally, in the commutative diagram
$$\xymatrix{\cC\cU\cC Y \ar[rr]^-{\cC\cU\cC(q_g)} \ar[d]_{\delta_{\cC Y}} & & \cC\cU\cC\cQ(g) \ar[d]^{\delta_{\cC\cQ(g)}} \\ 
\cC Y \ar[rr]_-{\cC(q_g)} & & \cC\cQ(g) }$$
$\cC\cU\cC(q_g), \delta_Y$ and $\delta_{\cQ(g)}$ are isomorphisms, so that $\cC(q_g)$ also is an isomorphism.
\end{proof}

\begin{Proposition}\label{PropOFSHTT}
Let $\cA$ be a category. Consider the category $\Arr(\cA)$ equipped with the structure of nullhomotopies 
$\cH(\cA)$ induced by the adjunctions $\cC \dashv \cU \dashv \cD$ as in Example \ref{ExArr}. 
Orthogonal factorization systems in $\cA$  correspond to $\cH(\cA)$-torsion theories in $\Arr(\cA).$
\end{Proposition}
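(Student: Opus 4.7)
The plan is to set up the correspondence via the obvious identifications: to an orthogonal factorization system $(\cE, \cM)$ on $\cA$ I associate the pair
$$\cT = \{(X,x,X_0) \in \Arr(\cA) \mid x \in \cE\}, \qquad \cF = \{(X,x,X_0) \in \Arr(\cA) \mid x \in \cM\},$$
and conversely to a $\cH(\cA)$-torsion theory $(\cT,\cF)$ I let $\cE$ (resp.\ $\cM$) consist of those arrows $x$ of $\cA$ such that $(\mathrm{dom}(x), x, \mathrm{cod}(x))$ lies in $\cT$ (resp.\ $\cF$). That these two constructions are mutually inverse will be a routine check.

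For the forward direction, repleteness of $\cT$ and $\cF$ follows from stability of $\cE$ and $\cM$ under composition with isomorphisms. Given $(X, x, X_0)$, I fix a factorization $x = m \cdot e$ with $e \colon X \to I$ in $\cE$ and $m \colon I \to X_0$ in $\cM$, and propose as $\cH(\cA)$-exact presentation
$$(X, e, I) \xrightarrow{(\id_X, m)} (X, x, X_0) \xrightarrow{(e, \id_{X_0})} (I, m, X_0)$$
together with the nullhomotopy $\xi = \id_I \colon I \to I$ on the composite $(e, m)$. Verifying the kernel and cokernel universal properties amounts to a direct computation from the description of morphisms and nullhomotopies in $\Arr(\cA)$ given in Example \ref{ExArr}. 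The orthogonality condition $\cT \perp \cF$ in $\Arr(\cA)$ is a transparent reformulation of $e \perp m$ in $\cA$: an arrow from an object of $\cT$ to an object of $\cF$ is precisely a commutative square between an $\cE$-arrow and an $\cM$-arrow, and a nullhomotopy on it is precisely a diagonal filler.

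For the backward direction, stability of $\cE$ and $\cM$ under composition with isos is immediate from repleteness. The nontrivial content is to reconstruct a factorization of $x \colon X \to X_0$ from its $\cH(\cA)$-exact presentation $T \to (X,x,X_0) \to F$ with nullhomotopy $\xi$. Applying Lemma \ref{LemmaIso} to the cokernel yields that its codomain-component is an isomorphism in $\cA$; dually, the domain-component of the kernel is an isomorphism. Using repleteness of $\cT$ and $\cF$, I may normalize the presentation to
$$(X, e, M) \xrightarrow{(\id_X, t_0)} (X, x, X_0) \xrightarrow{(f_1, \id_{X_0})} (M', m, X_0),$$
with a nullhomotopy $\xi \colon M \to M'$ satisfying $\xi \cdot e = f_1$ and $m \cdot \xi = t_0$, so that $x = t_0 \cdot e = m \cdot f_1$. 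A direct verification, requiring no pullbacks in $\cA$, shows that the triple $((X, f_1, M'), (\id_X, m), \id_{M'})$ is also a $\cH(\cA)$-kernel of $(f_1, \id_{X_0})$. The essential uniqueness of homotopy kernels (Proposition \ref{PropUniqHTT}) yields a unique iso from $(X, e, M)$ to $(X, f_1, M')$ compatible with all the structure; unraveling the defining identities forces this iso to have the form $(\id_X, \xi)$, so $\xi$ is an isomorphism in $\cA$. A further normalization along $\xi$ collapses the presentation to $(X, e, I) \to (X, x, X_0) \to (I, m, X_0)$ with $\xi = \id_I$, i.e., a factorization $x = m \cdot e$ with $e \in \cE$ and $m \in \cM$. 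The orthogonality $e \perp m$ in $\cA$ then follows from $T \perp F$ in $\Arr(\cA)$ by the same square-versus-filler correspondence used in the forward direction.

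The main obstacle is precisely this backward factorization step, since the natural argument relying on pullbacks in $\cA$ is unavailable here. The essential idea is to combine the double normalization provided by Lemma \ref{LemmaIso} (and its dual) with the observation that the nullhomotopy $\xi$ is itself the isomorphism between the two candidate ``middle objects'' $M$ and $M'$; this is what makes the factorization $x = m \cdot e$ emerge cleanly without any further hypothesis on $\cA$.
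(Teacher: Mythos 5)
Your proposal is correct and follows essentially the same route as the paper: the same dictionary between $(\cE,\cM)$ and $(\cT,\cF)$, the extension diagram of Example \ref{ExHExt} for the forward direction, and, for the backward direction, Lemma \ref{LemmaIso} (the paper uses only the codomain version and recovers the domain isomorphism from the kernel comparison, while you also invoke the dual -- a harmless variant) followed by the explicit computation of the $\cH(\cA)$-kernel over an isomorphism and the essential-uniqueness argument showing that the nullhomotopy $\xi$ is itself the invertible comparison. The only quibble is the citation: the uniqueness you need is that of homotopy kernels (\ref{TextUniq}), not Proposition \ref{PropUniqHTT}, which concerns presentations.
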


\begin{proof}
Let $(\cE,\cM)$ be an orthogonal factorization system in $\cA.$ We put 
\begin{enumerate}
\item[-] $\cT=$ the full subcategory of $\Arr(\cA)$ spanned by the objects $(E,e,E_0)$ with $e \in \cE,$
\item[-] $\cF=$ the full subcategory of $\Arr(\cA)$ spanned by the objects $(M,m,M_0)$ with $m \in \cM.$
\end{enumerate}
Conversely, let $(\cT,\cF)$ be a $\cH(\cA)$-torsion theory in $\Arr(\cA).$ We put
\begin{enumerate}
\item[-] $\cE=$ the class of arrows in $\cA$ of the form $t \colon T \to T_0,$ for $(T,t,T_0)$ in $\cT,$
\item[-] $\cM=$ the class of arrows in $\cA$ of the form $f \colon F \to F_0,$ for $(F,f,F_0)$ in $\cF.$
\end{enumerate}
We have to prove that the three conditions on $\cE$ and $\cM$ of Definition \ref{DefFS} correspond to their homologous conditions
on $\cT$ and $\cF$ of Definition \ref{DefHTT}. \\
1) Obviously, $\cT$ is replete iff $\cE$ is closed under composition with isomorphisms. The same holds for $\cF$ and $\cM.$ \\
2) Let $(X,x,X_0)$ be an object in $\Arr(\cA).$ Using its $(\cE,\cM)$-factorization in $\cA$
$$\xymatrix{X \ar[rr]^{x} \ar[rd]_{e_x} & & X_0 \\ & I(x) \ar[ru]_{m_x} }$$
we can construct the following diagram in $\Arr(\cA) \colon$
$$\xymatrix{X \ar[r]^{\id} \ar[d]_{e_x} & X \ar[r]^{e_x} \ar[d]_<<<{x} & I(x) \ar[d]^{m_x} \\
I(x) \ar[rru]^<<<<<{\id} \ar[r]_{m_x} & X_0 \ar[r]_{\id} & X_0 }$$
As already observed in Example \ref{ExHExt}, if we compare the previous diagram with the description of $\cH(\cA)$-kernels 
in $\Arr(\cA)$ given in Example \ref{ExArrHKHC} (and with the dual description of $\cH(\cA)$-cokernels), we immediately see that 
the triple $((X,e_x,I(x)), (\id_X,m_x), \id_{I(x)})$ 
is a $\cH(\cA)$-kernel of $(e_x,\id_{X_0})$ and that the triple $((I(x),m_x,X_0), (e_x,\id_{X_0}), \id_{I(x)})$ is a $\cH(\cA)$-cokernel 
of $(\id_X,m_x).$ Moreover, the object $(X,e_x,I(x))$ is in $\cT$ because $e_x \in \cE$ and the object $(I(x),m_x,X_0)$ is in 
$\cF$ because $m_x \in \cM.$ We can conclude that the above diagram is a $\cH(\cA)$-exact $(\cT,\cF)$-presentation in 
$\Arr(\cA)$ of the object $(X,x,X_0).$ \\
Conversely, we are going to construct the $(\cE,\cM)$-factorization of an arrow in $\cA.$ Fix an arrow $x \colon X \to X_0$ 
in $\cA,$ look at it as an object in $\Arr(\cA)$ and consider the $\cH(\cA)$-exact $(\cT,\cF)$-presentation 
$$\xymatrix{T(x) \ar[r]^-{t} \ar[d]_{t_x} & X \ar[r]^-{f} \ar[d]_<<<{x} & F(x) \ar[d]^{f_x} \\
T_0(x) \ar[rru]^<<<<<{\xi_x} \ar[r]_-{t_0} & X_0 \ar[r]_-{f_0} & F_0(x) }$$
Following Lemma \ref{LemmaIso}, we know that $f_0 \colon X_0 \to F_0(x)$ 
is an isomorphism. Therefore, using the description established in Example \ref{ExArrHKHC}, the $\cH(\cA)$-kernel of $(f,f_0)$ is 
given by the following diagram 
$$\xymatrix{X \ar[rr]^-{\id} \ar[d]_{f} & & X \ar[r]^-{f} \ar[d]_<<<{x} & F(x) \ar[d]^{f_x} \\
F(x) \ar[rrru]^<<<<<<<{\id} \ar[rr]_-{f_0^{-1} \cdot f_x} & & X_0 \ar[r]_-{f_0} & F_0(x) }$$
Since the $\cH(\cA)$-kernel of $(f,f_0)$ is essentially unique, there exists a unique isomorphism 
$(i,i_0) \colon (T(x),t_x,T_0(x)) \to (X,f,F(x))$ in $\Arr(\cA)$ such that
$$(\id_X,f_0^{-1} \cdot f_x) \cdot (i,i_0) = (t,t_0) \;\mbox{ and }\; \id_{F(x)} \circ (i,i_0) = \xi_x$$
The first component of the first condition gives $\id_X \cdot i = t,$ so that $t$ is an isomorphism. The second condition is an 
equation between nullhomotopies which can be rewritten as $\id_{F(x)} \cdot i_0 = \xi_x,$ so that $\xi_x$ also is an isomorphism.
Finally, the $(\cE,\cM)$-factorization of $x \colon X \to X_0$ is depicted by the following commutative diagram, where $t_x \in \cM$ 
and $f_x \in \cE,$
$$\xymatrix{X \ar[rr]^{x} \ar[d]_{t^{-1}}^{\simeq} & & X_0 \\
T(x) \ar[d]_{t_x} & & F_0(x) \ar[u]_{f_0^{-1}}^{\simeq} \\
T_0(x) \ar[rr]_{\xi_x}^{\simeq} & & F(x) \ar[u]_{f_x} }$$
Indeed, $f_0^{-1} \cdot f_x \cdot \xi_x \cdot t_x \cdot t^{-1} = f_0^{-1} \cdot f_x \cdot f \cdot t \cdot t^{-1} = f_0^{-1} \cdot f_x \cdot f
= f_0^{-1} \cdot f_0 \cdot x = x.$ \\
3) Since nullhomotopies in $\cH(\cA)$ are diagonals, the fact that $\cH(\cA)(h,h_0)$ is reduced to a singleton if the 
domain of $(h,h_0)$ is in $\cT$ and the codomain is in  $\cF,$ is precisely the orthogonality condition between arrows in $\cE$ 
and arrows in $\cM.$
\end{proof}

\begin{Corollary}\label{CorOFSid}
Let $(\cE,\cM)$ be an orthogonal factorization system in a category $\cA.$ Then $\cE$ and $\cM$ contain the identity arrows.
\end{Corollary}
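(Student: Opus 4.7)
The plan is to exploit the correspondence just established in Proposition \ref{PropOFSHTT}: I will transport the question from $\cA$ to $\Arr(\cA)$, show that for each $X \in \cA$ the object $(X,\id_X,X)$ lies in $\cT \cap \cF$, and read the conclusion back via the definitions of $\cT$ and $\cF$.

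More precisely, let $(\cT,\cF)$ be the $\cH(\cA)$-torsion theory in $\Arr(\cA)$ associated with $(\cE,\cM)$. For a fixed object $X \in \cA,$ the object $(X,\id_X,X)$ is $\cH(\cA)$-trivial: indeed, an element of $\cH(\cA)(\id_{(X,\id_X,X)})$ is a morphism $\lambda \colon X \to X$ satisfying $\lambda \cdot \id_X = \id_X$ and $\id_X \cdot \lambda = \id_X,$ and $\lambda = \id_X$ does the job. Equivalently, this is a special case of the description of $\cH(\cA)$-trivial objects recorded in Example \ref{ExArrId}, since $\id_X$ is trivially an isomorphism.

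Now I would invoke Corollary \ref{CorCarTrivObj} applied to the torsion theory $(\cT,\cF)$: the $\cH(\cA)$-trivial objects are exactly those in $\cT \cap \cF.$ Hence $(X,\id_X,X) \in \cT \cap \cF.$ By the explicit description of $\cT$ and $\cF$ in the proof of Proposition \ref{PropOFSHTT}, membership of $(X,\id_X,X)$ in $\cT$ means $\id_X \in \cE,$ and membership in $\cF$ means $\id_X \in \cM.$ Since $X$ was arbitrary, both $\cE$ and $\cM$ contain every identity arrow of $\cA.$

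There is essentially no obstacle here once the correspondence of Proposition \ref{PropOFSHTT} is in hand; the only substantive point is the identification of the $\cH(\cA)$-trivial objects of $\Arr(\cA)$ with the identity-like objects $(X,\id_X,X)$ (up to isomorphism), which is immediate from the explicit form of the nullhomotopies in $\cH(\cA).$
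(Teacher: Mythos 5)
Your proof is correct and follows essentially the same route as the paper: transport via Proposition \ref{PropOFSHTT}, observe that $\cH(\cA)(\id_{(X,\id_X,X)})$ is (a singleton, hence) non-empty, and conclude $(X,\id_X,X) \in \cT \cap \cF$ by Corollary \ref{CorCarTrivObj}. The only cosmetic difference is that the paper verifies condition (b) of that corollary (the set is a singleton) while you verify condition (c) (the set is non-empty); both are equivalent by the corollary itself.
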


\begin{proof}
Via Proposition \ref{PropOFSHTT}, we have to prove that the identity arrows of $\cA,$ seen as objects of $\Arr(\cA),$ are in $\cT \cap \cF,$
where $(\cT,\cF)$ is the $\cH(\cA)$-torsion theory corresponding to $(\cE,\cM).$ By Corollary \ref{CorCarTrivObj}, this amounts to showing
that $\cH(\cA)(\id_{(X,\id_X,X)}) = \{ \ast \},$ which is obvious.
\end{proof}

\begin{Text}\label{TrextFurhVar}{\rm
There are weaker notions than the one of orthogonal factorization system which are relevant for example in the context of abstract homotopy theory. 
Here we consider the one obtained simply dropping the uniqueness in the condition of orthogonality. Some other intermediate notions, especially 
functorial factorizations and algebraic functorial factorizations as in \cite{RT2,GT,RT7,GJ} and the corresponding notions of torsion operators, will 
be revisited from the point of view of homotopy torsion theories in a further work.
}\end{Text} 

\begin{Definition}\label{DefWOFSWHTT}{\rm
\begin{enumerate}
\item[] 
\item Let $(\cB,\Theta)$ be a category with nullhomotopies. A {\it weak $\Theta$-torsion theory} $(\cT,\cF)$ is defined in the 
same way as a $\Theta$-torsion theory (see Definition \ref{DefHTT}) but asking that, for every arrow $h \colon T \to F,$ with 
$T \in \cT$ and $F \in \cF,$ the set $\Theta(h)$ is non-empty (instead of being a singleton, as in \ref{DefHTT}.3).
\item Let $\cA$ be a category. A {\it weakly orthogonal factorization system} is defined in the same way as an orthogonal factorization 
system (see Definition \ref{DefFS}) but without the uniqueness of the diagonal in the orthogonality condition \ref{DefFS}.3.
\end{enumerate}
}\end{Definition} 

\begin{Proposition}\label{PropWOFSWHTT}
Let $\cA$ be a category. Consider the category $\Arr(\cA)$ equipped with the structure of nullhomotopies $\cH(\cA).$
Weakly orthogonal factorization systems in $\cA$  correspond to weak $\cH(\cA)$-torsion theories in $\Arr(\cA).$
\end{Proposition}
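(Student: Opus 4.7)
My plan is to repeat, almost verbatim, the proof of Proposition \ref{PropOFSHTT}, observing that the weakening of Definition \ref{DefHTT} to Definition \ref{DefWOFSWHTT}.1 and the weakening of Definition \ref{DefFS} to Definition \ref{DefWOFSWHTT}.2 affect only the orthogonality axiom, and in exactly parallel ways on the two sides of the correspondence.

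First I would set up the same bijection as before: given a weakly orthogonal factorization system $(\cE, \cM)$, define $\cT$ (resp.\ $\cF$) to be the full subcategory of $\Arr(\cA)$ spanned by those $(X, x, X_0)$ with $x \in \cE$ (resp.\ $x \in \cM$); conversely, from a weak $\cH(\cA)$-torsion theory $(\cT, \cF)$, let $\cE$ (resp.\ $\cM$) consist of the underlying arrows $t : T \to T_0$ of objects $(T, t, T_0) \in \cT$ (resp.\ of objects in $\cF$). Then conditions (1) and (2) of Definition \ref{DefHTT} for $(\cT, \cF)$ translate, under this correspondence, to conditions (1) and (2) of Definition \ref{DefFS} for $(\cE, \cM)$ \emph{by exactly the argument given in the proof of Proposition \ref{PropOFSHTT}}. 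In particular, the construction of the $\cH(\cA)$-exact presentation from the $(\cE,\cM)$-factorization (as in Example \ref{ExHExt}) and the reverse construction of the $(\cE,\cM)$-factorization from a presentation (via Lemma \ref{LemmaIso} plus the essential uniqueness of $\cH(\cA)$-kernels) are independent of any uniqueness in the orthogonality clause.

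Second, I would handle the single point where the weak and strong cases diverge, namely condition (3). Recall from Example \ref{ExArr} that for an arrow $(h,h_0) \colon (E,e,E_0) \to (M,m,M_0)$ in $\Arr(\cA)$, the set $\cH(\cA)(h,h_0)$ consists precisely of arrows $\lambda \colon E_0 \to M$ with $\lambda \cdot e = h$ and $m \cdot \lambda = h_0$, i.e.\ of diagonal fillers of the square $(h,h_0)$. Hence asking that $\cH(\cA)(h,h_0)$ be non-empty for every $(E,e,E_0) \in \cT$ and $(M,m,M_0) \in \cF$ is \emph{verbatim} the existence clause in the weak orthogonality axiom \ref{DefFS}.3 (with uniqueness removed). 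Thus condition (3) of Definition \ref{DefWOFSWHTT}.1 on $(\cT, \cF)$ corresponds exactly to condition (3) of Definition \ref{DefWOFSWHTT}.2 on $(\cE, \cM)$.

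I do not anticipate a genuine obstacle here: the only non-formal ingredient in the proof of Proposition \ref{PropOFSHTT} is Lemma \ref{LemmaIso} (which does not mention orthogonality) and the essential uniqueness of $\cH(\cA)$-kernels (which is a universal property internal to the structure of nullhomotopies and independent of any assumption on $\cT, \cF$). The orthogonality axiom enters only through the identification of $\cH(\cA)(h,h_0)$ with diagonal fillers, and the weakening in Definition \ref{DefWOFSWHTT} is chosen precisely so that this identification continues to give the desired correspondence. In particular, the proof could be written very briefly as: ``same as Proposition \ref{PropOFSHTT}, replacing in step (3) the word \emph{singleton} by the phrase \emph{non-empty set}, and correspondingly dropping the uniqueness of $\lambda$ in Definition \ref{DefFS}.3.''
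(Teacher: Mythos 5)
Your proposal is correct and is essentially the paper's own proof, which simply says the result ``follows from a simple inspection of the proof of Proposition \ref{PropOFSHTT}''; you have usefully made explicit that conditions (1) and (2) never invoke uniqueness of the diagonal, and that condition (3) weakens in exactly parallel ways on both sides via the identification of nullhomotopies with diagonal fillers.
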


\begin{proof}
This follows from a simple inspection of the proof of Proposition \ref{PropOFSHTT}.
\end{proof}

\begin{Remark}\label{RemInvConstr}{\rm
In the proof of Proposition \ref{PropOFSHTT}, we construct a $\cH(\cA)$-exact presentation 
starting from a factorization and vice-versa, as summarized hereunder:
$$\xymatrix{X \ar[rr]^{x} \ar[rd]_{e_x} & & X_0 \\ & I(x) \ar[ru]_{m_x} }
\;\;\; \Rightarrow \;\;\;
\xymatrix{X \ar[r]^{\id} \ar[d]_{e_x} & X \ar[r]^{e_x} \ar[d]_<<<{x} & I(x) \ar[d]^{m_x} \\
I(x) \ar[rru]^<<<<<{\id} \ar[r]_{m_x} & X_0 \ar[r]_{\id} & X_0 }$$
$$\xymatrix{T(x) \ar[r]^-{t} \ar[d]_{t_x} & X \ar[r]^-{f} \ar[d]_<<<{x} & F(x) \ar[d]^{f_x} \\
T_0(x) \ar[rru]^<<<<<{\xi_x} \ar[r]_-{t_0} & X_0 \ar[r]_-{f_0} & F_0(x) }
\;\;\; \Rightarrow \;\;\;
\xymatrix{X \ar[rr]^{x} \ar[d]_{t^{-1}}^{\simeq} & & X_0 \\
T(x) \ar[d]_{t_x} & & F_0(x) \ar[u]_{f_0^{-1}}^{\simeq} \\
T_0(x) \ar[rr]_{\xi_x}^{\simeq} & & F(x) \ar[u]_{f_x} }$$
Let us point out that in the weak case, despite the fact that the factorization of an arrow and the exact presentation 
of an object are no longer essentially unique, the above constructions still realize an essential bijection between the 
factorizations of an arrow $x \colon X \to X_0$ and the exact presentations of the object $(X,x,X_0).$ In fact:
\begin{enumerate}
\item[-] If we start with a factorization, construct an exact presentation and go back to factorizations, we precisely get 
the factorization we started with.
\item[-] If we start with an exact presentation, we construct a factorization and go back to exact presentations, we get
a new exact presentation which is isomorphic to the one we started with. The new exact presentation is
$$\xymatrix{X \ar[rr]^-{\id} \ar[d]_{\xi_x \cdot t_x \cdot t^{-1}} & & X \ar[rr]^-{\xi_x \cdot t_x \cdot t^{-1}} \ar[d]_<<<{x} 
& & F(x) \ar[d]^{f_0^{-1} \cdot f_x} \\
F(x) \ar[rrrru]^<<<<<<<<<<<{\id} \ar[rr]_-{f_0^{-1} \cdot f_x} & & X_0 \ar[rr]_-{\id} & & X_0 }$$
and the isomorphism between the old presentation and the new one is realized by
$$\xymatrix{X \ar[r]^{t^{-1}} \ar[d]_{\xi_x \cdot t_x \cdot t^{-1}} & T(x) \ar[d]^{t_x} \\ F(x) \ar[r]_{\xi_x^{-1}} & T_0(x) }
\;\;\;\;\;\;\;\;\;\;
\xymatrix{F(x) \ar[r]^{\id} \ar[d]_{f_0^{-1} \cdot f_x} & F(x) \ar[d]^{f_x} \\ X_0 \ar[r]_{f_0} & F_0(x) }$$
\end{enumerate}
}\end{Remark} 

\section{The quasi-proper case}\label{SecQP}

As a step towards the case of proper orthogonal factorization systems, which will be treated in Section \ref{SecProper}, 
we find useful to consider the intermediate notion of quasi-proper orthogonal factorization systems. 

\begin{Definition}\label{DefVarFS}{\rm
Let $\cA$ be a category.
\begin{enumerate}
\item A (weakly) orthogonal factorization system $(\cE,\cM)$ is {\it quasi-proper} if:
\begin{enumerate}
\item for every arrow $e \colon E \to E_0$ in $\cE,$ the unit 
$$\gamma_{(E,e,E_0)} = (e,\id_{E_0}) \colon (E,e,E_0) \to (E_0,\id_{E_0},E_0)$$ 
of the adjunction $\cC \dashv \cU$ is an epimorphism in $\Arr(\cA),$
\item for every arrow $m \colon M \to M_0$ in $\cM,$ the counit
$$\beta_{(M,m,M_0)} = (\id_M,m) \colon (M,\id_M,M) \to (M,m,M_0)$$ 
of the adjunction $\cU \dashv \cD$ is a monomorphism in $\Arr(\cA).$ 
\end{enumerate}
\item An orthogonal factorization system $(\cE,\cM)$ is {\it proper} if:
\begin{enumerate}
\item every arrow in $\cE$ is an epimorphism,
\item every arrow in $\cM$ is a monomorphism.
\end{enumerate}
\end{enumerate}
}\end{Definition} 

\begin{Text}\label{TextCommFS}{\rm 
Observe that the notion of proper weakly orthogonal factorization system is irrelevant because, if the arrows in $\cE$ 
are epimorphisms or if the arrows in $\cM$ are monomorphisms, then weakly orthogonal implies orthogonal. 
}\end{Text} 

\begin{Text}\label{TextTermInit}{\rm
In the next lemma, we will use the following simple facts:
\begin{enumerate}
\item If a category $\cA$ has a terminal object $\ast,$ then the functor $\cD \colon \Arr(\cA) \to \cA$ 
has a right adjoint defined by $\Lambda(X) = (X, ! \colon X \to \ast, \ast).$ 
\item If a category $\cA$ has an initial object $\emptyset,$ then the functor $\cC \colon \Arr(\cA) \to \cA$
as a left adjoint defined by $\Gamma(X)=(\emptyset, ! \colon \emptyset \to X, X).$
\end{enumerate}
}\end{Text} 

\begin{Lemma}\label{LemmaTermInit}
Let $\cA$ be a category.
\begin{enumerate}
\item Every proper orthogonal factorization system in $\cA$ is quasi-proper.
\item If the category $\cA$ has a terminal object or an initial object, then every quasi-proper weakly 
orthogonal factorization system is orthogonal.
\item If the category $\cA$ has a terminal object and an initial object, then every quasi-proper weakly 
orthogonal factorization system is proper and orthogonal.
\end{enumerate}
\end{Lemma}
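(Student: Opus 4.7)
The plan is to reduce everything to a direct componentwise analysis of when the unit $\gamma_{(E,e,E_0)} = (e, \id_{E_0})$ is epi in $\Arr(\cA)$ and when the counit $\beta_{(M,m,M_0)} = (\id_M, m)$ is mono. First I would unpack these conditions. A parallel pair $(g, g_0), (h, h_0) \colon (E_0, \id_{E_0}, E_0) \to (Z, z, Z_0)$ in $\Arr(\cA)$ forces $g_0 = z \cdot g$ and $h_0 = z \cdot h$ by commutativity of the defining squares, so $\gamma_{(E,e,E_0)}$ is epi in $\Arr(\cA)$ if and only if the following implication holds in $\cA$: for every $z \colon Z \to Z_0$ and every $g, h \colon E_0 \to Z$ with $g \cdot e = h \cdot e$ and $z \cdot g = z \cdot h$, one has $g = h$. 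Dually, $\beta_{(M,m,M_0)}$ is mono in $\Arr(\cA)$ iff for every $x \colon X \to X_0$ and every $f_0, g_0 \colon X_0 \to M$ with $f_0 \cdot x = g_0 \cdot x$ and $m \cdot f_0 = m \cdot g_0$, one has $f_0 = g_0$.

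Part 1 drops out immediately from this reformulation: if $e$ is epi in $\cA$, the condition $g \cdot e = h \cdot e$ alone yields $g = h$, and dually if $m$ is mono then the second condition alone yields $f_0 = g_0$. For part 2, the key is that having a terminal or initial object upgrades quasi-properness back into a genuine epi/mono condition. Assuming $\cA$ has a terminal object $\ast$, I would test the epi property of $\gamma_{(E, e, E_0)}$ with $z$ taken to be $!_Z \colon Z \to \ast$: then $z \cdot g = !_{E_0} = z \cdot h$ holds automatically, so the implication above forces $g \cdot e = h \cdot e \Rightarrow g = h$, making every $e \in \cE$ an epimorphism in $\cA$. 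Uniqueness of the diagonal in an orthogonality square then follows by cancelling $e$ on the right. Dually, an initial object $\emptyset$ lets me test the mono property of $\beta_{(M,m,M_0)}$ with $x = !_{X_0} \colon \emptyset \to X_0$, showing every $m \in \cM$ is mono and deducing uniqueness of the diagonal by cancelling $m$ on the left.

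Part 3 then combines both strengthenings: with both initial and terminal objects available, every $e \in \cE$ is epi and every $m \in \cM$ is mono, which is properness itself, and orthogonality follows from either half of part 2. The only subtle point, and the conceptual crux of the whole lemma, is noticing that the Arr-epi condition on $\gamma_{(E,e,E_0)}$ is a priori strictly weaker than ``$e$ is epi in $\cA$'' precisely because of the auxiliary equation $z \cdot g = z \cdot h$ coming from the second component of the codomain $(Z, z, Z_0)$; a terminal object is exactly what is needed to make this auxiliary equation free of cost, and dually for the initial object on the $\cM$-side. I do not expect any real technical obstacle; once the componentwise unpacking is in place, all three parts are one-line observations.
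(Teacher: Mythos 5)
Your proof is correct and follows essentially the same route as the paper: part 1 is the same direct observation, and your terminal/initial-object test objects $(Z,!,\ast)$ and $(\emptyset,!,X_0)$ are exactly the values of the adjoints $\Lambda$ and $\Gamma$ that the paper invokes when it argues that $\cD$ (resp.\ $\cC$) preserves epimorphisms (resp.\ monomorphisms). The only difference is presentational: you carry out the componentwise computation by hand where the paper cites the adjunctions $\cD \dashv \Lambda$ and $\Gamma \dashv \cC.$
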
 

\begin{proof}
1. If $e \colon E \to E_0$ is an epimorphism in $\cA,$ then $(e,\id_{E_0})$ is an epimorphism in $\Arr(\cA).$
If $m \colon M \to M_0$ is a monomorphism in $\cA,$ then $(\id_M,m)$ is a monomorphism in $\Arr(\cA).$ \\
2 and 3. By assumption, $(e,\id_{E_0})$ is an epimorphism for every $e \colon E \to E_0$ in $\cE.$ If $\cA$ 
has a terminal object, then $\cD \colon \Arr(\cA) \to \cA$ has a right adjoint (see \ref{TextTermInit}) and then 
it preserves epimorphisms. Therefore, $e$ is an epimorphism, which implies that the diagonal in the 
orthogonality condition of Definition \ref{DefFS} is necessarily unique. The argument if $\cA$ has an initial 
object is dual.
\end{proof} 

We have formulated the definition of quasi-proper factorization system in such a way to make easy the comparison with proper 
factorization systems. Here we give an equivalent definition which has the advantage to be transposable to homotopy torsion theories. 

\begin{Lemma}\label{LemmaQPFSbis}
Let $(\cE,\cM)$ be a weakly orthogonal factorization system in a category $\cA.$ For every arrow 
$x \colon X \to X_0$ and for any of its $(\cE,\cM)$-factorization $x = m_x \cdot e_x,$ consider the diagram
$$\xymatrix{X \ar[r]^{\id} \ar[d]_{e_x} & X \ar[r]^{e_x} \ar[d]_{x} & I(x) \ar[d]^{m_x} \\
I(x) \ar[r]_{m_x} & X_0 \ar[r]_{\id} & X_0 }$$
Then $(\cE,\cM)$ is quasi-proper if and only if, in the above diagram, the square on the left is a 
monomorphism in $\Arr(\cA)$ and the square on the right is an epimorphism in $\Arr(\cA).$
\end{Lemma}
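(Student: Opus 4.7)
The plan is to unpack the four conditions (left square mono, right square epi, quasi-proper (a), quasi-proper (b)) into elementary conditions in $\cA$, and then match them two by two. Throughout I will use that, by Corollary~\ref{CorOFSid}, identities lie in both $\cE$ and $\cM$, so that for any $e\in\cE$ the decomposition $e=\id_{E_0}\cdot e$ is an $(\cE,\cM)$-factorization, and for any $m\in\cM$ the decomposition $m=m\cdot\id_M$ is an $(\cE,\cM)$-factorization.

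First I would unfold the mono/epi conditions in $\Arr(\cA)$. Writing $n \colon Z\to Z_0$ for test objects and using that a morphism $(u,u_0)\colon (M,\id_M,M)\to(Z,n,Z_0)$ is entirely determined by $u_0$ (via $u=u_0\cdot n$, after passing through the compatibility square), the arrow $\beta_{(M,m,M_0)}=(\id_M,m)$ is a monomorphism in $\Arr(\cA)$ if and only if: for every $n\colon Z\to Z_0$ and every $u_0,v_0\colon Z_0\to M$ satisfying $u_0\cdot n=v_0\cdot n$ and $m\cdot u_0=m\cdot v_0$, one has $u_0=v_0$. Dually, unfolding morphisms out of $(E_0,\id,E_0)$, the arrow $\gamma_{(E,e,E_0)}=(e,\id_{E_0})$ is an epimorphism in $\Arr(\cA)$ if and only if: for every $y\colon Y\to Y_0$ and every $u,v\colon E_0\to Y$ satisfying $y\cdot u=y\cdot v$ and $u\cdot e=v\cdot e$, one has $u=v$.

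Next I would unfold the conditions on the two squares of the diagram. The left square $(\id_X,m_x)\colon(X,e_x,I(x))\to(X,x,X_0)$ is a monomorphism in $\Arr(\cA)$ if and only if: for every $n\colon Z\to Z_0$, $u\colon Z\to X$, and $u_0,v_0\colon Z_0\to I(x)$ satisfying $u_0\cdot n=v_0\cdot n=e_x\cdot u$ and $m_x\cdot u_0=m_x\cdot v_0$, one has $u_0=v_0$. The right square $(e_x,\id_{X_0})\colon(X,x,X_0)\to(I(x),m_x,X_0)$ is an epimorphism in $\Arr(\cA)$ if and only if: for every $y\colon Y\to Y_0$ and $u,v\colon I(x)\to Y$ such that $y\cdot u=y\cdot v$ factors through $m_x$, and such that $u\cdot e_x=v\cdot e_x$, one has $u=v$.

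Comparing the unfoldings pairwise, the implication from quasi-proper to square conditions is immediate: given $x$, condition (b) applied to $m_x\in\cM$ yields directly the left-square condition (the extra factorization $u_0\cdot n=e_x\cdot u$ is not needed), and condition (a) applied to $e_x\in\cE$ yields directly the right-square condition (the extra factorization of $y\cdot u$ through $m_x$ is not needed). Conversely, suppose the square conditions hold for every $x$. Given $m\in\cM$, apply the left-square condition to $x:=m$ with factorization $m=m\cdot\id_M$, so that $e_x=\id_M$ and $m_x=m$; then the factorization $u_0\cdot n=\id_M\cdot u=u$ is automatic and recovers condition (b). Given $e\in\cE$, apply the right-square condition to $x:=e$ with factorization $e=\id_{E_0}\cdot e$, so that $e_x=e$ and $m_x=\id_{E_0}$; the factorization of $y\cdot u$ through $\id_{E_0}$ is automatic and recovers condition (a).

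The only subtlety (hardly an obstacle) is being careful that mono/epi in $\Arr(\cA)$ is \emph{not} simply mono/epi on each component in $\cA$: test morphisms must themselves be compatible squares. This is what makes the square conditions a priori weaker than ``$m_x$ mono'' and ``$e_x$ epi'', and it is exactly why the equivalence with the genuinely weaker notions (a) and (b) works, rather than collapsing everything into properness. Corollary~\ref{CorOFSid} is the essential ingredient that lets the two sides meet, by allowing arbitrary $e\in\cE$ and $m\in\cM$ to be realized as factorization parts.
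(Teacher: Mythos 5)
Your argument is correct in its overall architecture, and for the direction ``quasi-proper $\Rightarrow$ square conditions'' it takes a more elementary route than the paper. The paper observes that the commutative square in $\Arr(\cA)$ formed by $\gamma_{(X,e_x,I(x))}$, $\beta_{(I(x),m_x,X_0)}$ and the two squares of the statement is simultaneously a pullback and a pushout, so that the left square is a pullback of the mono $\beta_{(I(x),m_x,X_0)}$ and the right square a pushout of the epi $\gamma_{(X,e_x,I(x))}$, and concludes by stability of monos under pullback and of epis under pushout. Your element-wise unfolding reaches the same conclusion by exhibiting the square conditions as quantifications over a restricted class of test squares (those factoring through $e_x$, resp.\ $m_x$) of conditions (a) and (b) of Definition~\ref{DefVarFS}; this is more pedestrian but makes transparent exactly why the square conditions are a priori weaker. (A cosmetic slip: in your unfolding of the mono condition the test morphisms go \emph{into} $(M,\id_M,M)$, so it is $u$ that is determined by $u_0$ via $u=u_0\cdot n$, not a morphism out of $(M,\id_M,M)$; the condition you extract is nevertheless the right one.)

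The one genuine soft spot is the converse direction, where you --- like the paper, which tacitly uses the factorizations $x=\id_{X_0}\cdot x$ for $x\in\cE$ and $x=x\cdot\id_X$ for $x\in\cM$ --- need the identities to lie in both classes. You invoke Corollary~\ref{CorOFSid}, but that corollary is stated and proved only for \emph{orthogonal} factorization systems, whereas Lemma~\ref{LemmaQPFSbis} concerns weakly orthogonal ones, for which Definition~\ref{DefWOFSWHTT} does not force identities into either class (on $\mathbf{Set}$, take $\cM$ the split epimorphisms and $\cE$ the non-surjective injections together with $\id_\emptyset$: this is weakly orthogonal but $\id_X\notin\cE$ for $X\neq\emptyset$). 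The gap is repairable without identities. Given $m\in\cM$ and \emph{any} factorization $m=m'\cdot e'$, weak orthogonality applied to the square $m\cdot\id_M=m'\cdot e'$ yields $\lambda$ with $\lambda\cdot e'=\id_M$ and $m\cdot\lambda=m'$; hence $(\id_M,e')\colon(M,\id_M,M)\to(M,e',I)$ is a split mono with retraction $(\id_M,\lambda)$, and $\beta_{(M,m,M_0)}=(\id_M,m')\cdot(\id_M,e')$ is a mono because the left square $(\id_M,m')$ is one by hypothesis. Dually, for $e\in\cE$ with any factorization $e=m'\cdot e'$, the lift in $\id_{E_0}\cdot e=m'\cdot e'$ exhibits $(m',\id_{E_0})$ as a split epi, and $\gamma_{(E,e,E_0)}=(m',\id_{E_0})\cdot(e',\id_{E_0})$ is an epi since the right square $(e',\id_{E_0})$ is one. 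With this substitution your proof is complete, and it in fact tightens the paper's own argument.
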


\begin{proof}
Assume that the above condition is satisfied. If you take $x$ in $\cE,$ then the square on the right is $\gamma_{(X,x,X_0)};$ 
if you take $x$ in $\cM,$ then the square on the left is $\beta_{(X,x,X_0)}.$ \\
 Conversely, if $(\cE,\cM)$ is quasi-proper, use that the diagram 
$$\xymatrix{(X,e_x,I(x)) \ar[rrrr]^-{\gamma_{(X,e_x,I(x))}=(e_x,\id_{I(x)})} \ar[d]_{(\id_X,m_x)} & & & & 
(I(x),\id_{I(x)},I(x)) \ar[d]^{\beta_{(I(x),m_x,X_0)}=(\id_{I(x)},m_x)} \\
(X,x,X_0) \ar[rrrr]_-{(e_x,\id_{X_0})} & & & & (I(x),m_x,X_0) }$$
is a pullback and a pushout  in $\Arr(\cA).$ 
\end{proof} 

We now move on to homotopy torsion theories.

\begin{Definition}\label{DefVarHTT}{\rm
Let $(\cB,\Theta)$ be a category with nullhomotopies.
A (weak) $\Theta$-torsion theory $(\cT,\cF)$ is {\it quasi-proper} if, in each $\Theta$-exact $(\cT,\cF)$-presentation 
of an object $X$ 
$$\xymatrix{T(X) \ar[rr]_{t_X} \ar@/^1.8pc/@{-->}[rrrr]^{}_{\xi_X \; \Downarrow} & & X \ar[rr]_-{f_X} & & F(X) }$$
the arrow $t_X$ is a monomorphism and the arrow $f_X$ is an epimorphism.
}\end{Definition} 

\begin{Text}\label{TextNotUnique}{\rm 
Some comments on the previous definition.
\begin{enumerate}
\item As already observed, for a weak homotopy torsion theory, the exact presentation of an object is no longer essentially unique. 
This is the reason way, in Definition \ref{DefVarHTT}, we require the condition for any exact presentation of an object. If two exact 
presentations of the same object are isomorphic and one of them satisfies the condition of quasi-properness, the other one also 
satisfies the same condition. 
\item If the structure of nullhomotopies $\Theta$ is discrete, then any $\Theta$-torsion theory is quasi-proper. Indeed, we know from 
Lemma \ref{LemmaZHKer} that $\Theta$-kernels and $\Theta$-cokernels coincide with $\cZ_1(\Theta)$-kernels and 
$\cZ_1(\Theta)$-cokernels which are, respectively, monomorphisms and epimorphisms, as proved in \cite{FF} (see also Remark \ref{RemCanc}.2).
\end{enumerate} 
}\end{Text} 

Now we can adapt Proposition \ref{PropOFSHTT} to the quasi-proper case.

\begin{Proposition}\label{PropPropWOFSHTT}
Let $\cA$ be a category. Consider the category $\Arr(\cA)$ equipped with the structure of nullhomotopies $\cH(\cA).$
Quasi-proper weakly orthogonal factorization systems in $\cA$ correspond to quasi-proper weak  
$\cH(\cA)$-torsion theories in $\Arr(\cA).$
\end{Proposition}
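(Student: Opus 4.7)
The plan is to bootstrap from Proposition \ref{PropWOFSWHTT}, which already gives the bijective correspondence between weakly orthogonal factorization systems $(\cE,\cM)$ in $\cA$ and weak $\cH(\cA)$-torsion theories $(\cT,\cF)$ in $\Arr(\cA)$. All that remains is to match the two quasi-properness conditions, and the key tool is Lemma \ref{LemmaQPFSbis}, which reformulates quasi-properness of $(\cE,\cM)$ exactly in terms of certain squares in $\Arr(\cA)$ being monomorphisms and epimorphisms.

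The central observation is that, under the correspondence recalled in Remark \ref{RemInvConstr}, a choice of $(\cE,\cM)$-factorization $x = m_x \cdot e_x$ of an arrow $x\colon X \to X_0$ corresponds to the $\cH(\cA)$-exact presentation
$$\xymatrix{(X,e_x,I(x)) \ar[rr]^-{(\id_X,m_x)} & & (X,x,X_0) \ar[rr]^-{(e_x,\id_{X_0})} & & (I(x),m_x,X_0)}$$
so that $t_{(X,x,X_0)} = (\id_X,m_x)$ and $f_{(X,x,X_0)} = (e_x,\id_{X_0})$. But these are precisely the left and right squares in the diagram of Lemma \ref{LemmaQPFSbis}. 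Therefore, ``$t_X$ is a monomorphism in $\Arr(\cA)$ and $f_X$ is an epimorphism in $\Arr(\cA)$'' is exactly the condition from Lemma \ref{LemmaQPFSbis} characterizing quasi-properness of $(\cE,\cM)$.

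To make this into a proof, I would first fix an arrow $x$ in $\cA$ (equivalently, an object $(X,x,X_0)$ of $\Arr(\cA)$) and apply Remark \ref{RemInvConstr} to pass back and forth between its $(\cE,\cM)$-factorizations and the $\cH(\cA)$-exact $(\cT,\cF)$-presentations of $(X,x,X_0)$. One direction ($(\cE,\cM)$ quasi-proper $\Rightarrow$ $(\cT,\cF)$ quasi-proper) is immediate from the identification above applied to the canonical presentation arising from any factorization. For the converse, given any $\cH(\cA)$-exact $(\cT,\cF)$-presentation of $(X,x,X_0)$, I would use Remark \ref{RemInvConstr} again to produce an $(\cE,\cM)$-factorization whose associated canonical presentation is isomorphic to the given one; since monomorphisms and epimorphisms are isomorphism-invariant, the mono/epi conditions on the original $t_X$ and $f_X$ transfer to the canonical presentation and hence yield quasi-properness of $(\cE,\cM)$ via Lemma \ref{LemmaQPFSbis}.

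The only subtle point—and really the only thing one needs to be careful about in the weak case—is that exact presentations are no longer essentially unique, so the universal quantifier ``for every exact presentation'' in Definition \ref{DefVarHTT} must be matched with the universal quantifier ``for every $(\cE,\cM)$-factorization'' underlying Lemma \ref{LemmaQPFSbis}. This matching is precisely what Remark \ref{RemInvConstr} guarantees: the two correspondences are inverse to each other up to isomorphism in $\Arr(\cA)$, and the conditions at issue are isomorphism-invariant. No additional computation is really needed beyond checking this alignment of quantifiers.
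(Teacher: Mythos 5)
Your proposal is correct and follows exactly the paper's own route: the paper's proof likewise reduces the statement to Lemma \ref{LemmaQPFSbis}, the isomorphism-invariance observation of \ref{TextNotUnique}.1, and the essential bijection of Remark \ref{RemInvConstr}. Your identification of $t_{(X,x,X_0)}=(\id_X,m_x)$ and $f_{(X,x,X_0)}=(e_x,\id_{X_0})$ with the two squares of Lemma \ref{LemmaQPFSbis}, together with the quantifier-matching via Remark \ref{RemInvConstr}, is precisely the content the paper leaves implicit.
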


\begin{proof} 
The additional point to prove here is that, for a fixed arrow $x \colon X \to X_0$ in $\cA,$ all the 
$(\cE,\cM)$-factorizations of $x$ satisfy the quasi-properness condition of Definition \ref{DefVarFS}
if and only if all the $\cH(\cA)$-exact $(\cT,\cF)$-presentations of the object $(X,x,X_0)$ satisfy the 
quasi-properness condition of Definition \ref{DefVarHTT}. Everything follows easily from
Lemma \ref{LemmaQPFSbis}, the first point in \ref{TextNotUnique} and Remark \ref{RemInvConstr}.
\end{proof} 

\begin{Corollary}\label{CorPropOFSHTT}
Let $\cA$ be a category. Consider the category $\Arr(\cA)$ equipped with the structure of nullhomotopies $\cH(\cA).$ 
Quasi-proper orthogonal factorization systems in $\cA$ correspond to quasi-proper $\cH(\cA)$-torsion theories in $\Arr(\cA).$
\end{Corollary}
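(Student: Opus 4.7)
The plan is to deduce Corollary \ref{CorPropOFSHTT} directly from Proposition \ref{PropPropWOFSHTT} by observing that the correspondence established there restricts, on the orthogonal side, exactly to orthogonal factorization systems and, on the torsion theory side, exactly to (non-weak) homotopy torsion theories. So the only thing I really need to check is that the extra \emph{uniqueness} clause matches on both sides.

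More precisely, I would start from the bijection of Proposition \ref{PropPropWOFSHTT} between quasi-proper weakly orthogonal factorization systems $(\cE,\cM)$ in $\cA$ and quasi-proper weak $\cH(\cA)$-torsion theories $(\cT,\cF)$ in $\Arr(\cA)$, given by
\[
\cT = \{(E,e,E_0)\mid e\in\cE\},\qquad \cF = \{(M,m,M_0)\mid m\in\cM\}.
\]
Since the quasi-proper condition and its correspondence are already handled by Proposition \ref{PropPropWOFSHTT}, it suffices to observe that, under this bijection, $(\cE,\cM)$ satisfies the \emph{uniqueness} in the orthogonality condition (Definition \ref{DefFS}.3) if and only if $(\cT,\cF)$ satisfies $T\perp F$ for all $T\in\cT$, $F\in\cF$ (Definition \ref{DefHTT}.3), i.e.\ if $\cH(\cA)(h,h_0)$ is a singleton rather than merely non-empty whenever $(h,h_0)\colon(T,t,T_0)\to(F,f,F_0)$ with $T\in\cT, F\in\cF$.

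The key observation is the explicit description of $\cH(\cA)$ given in Example \ref{ExArr}: a nullhomotopy on $(h,h_0)\colon(T,t,T_0)\to(F,f,F_0)$ is exactly a diagonal $\lambda\colon T_0\to F$ with $\lambda\cdot t=h$ and $f\cdot\lambda=h_0$. Hence non-emptiness of $\cH(\cA)(h,h_0)$ corresponds to the \emph{existence} of a diagonal (weak orthogonality of $t\in\cE$ against $f\in\cM$), while being a singleton corresponds to the existence \emph{and uniqueness} of such a diagonal (orthogonality). This identification is uniform, so strengthening weak to strict orthogonality on one side corresponds exactly to strengthening weak $\cH(\cA)$-torsion to $\cH(\cA)$-torsion on the other side.

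I do not anticipate any serious obstacle: the proof is essentially a bookkeeping corollary. The only mildly delicate point is to ensure that the equivalence ``unique diagonal for all $e\in\cE$, $m\in\cM$'' matches ``$\cH(\cA)(h,h_0)$ is a singleton for all morphisms between objects of $\cT$ and $\cF$'', but this is immediate from the above description of nullhomotopies: every arrow in $\Arr(\cA)$ from $(T,t,T_0)\in\cT$ to $(F,f,F_0)\in\cF$ is a square of the orthogonality type, and conversely. One short proof is therefore simply: apply Proposition \ref{PropPropWOFSHTT} and restrict the bijection to those quasi-proper weak systems satisfying the additional uniqueness clause on either side; the two uniqueness clauses coincide by the description of $\cH(\cA)$.
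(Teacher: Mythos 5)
Your proposal is correct and is essentially the paper's argument: the paper's proof just says ``combine Proposition \ref{PropOFSHTT} and Proposition \ref{PropPropWOFSHTT},'' and the matching of the two uniqueness clauses that you verify by hand (unique diagonal for $e\perp m$ versus $\cH(\cA)(h,h_0)$ being a singleton) is precisely part 3) of the proof of Proposition \ref{PropOFSHTT}, which you have simply re-derived inline rather than cited.
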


\begin{proof}
This follows combining Proposition \ref{PropOFSHTT} and Proposition \ref{PropPropWOFSHTT}.
\end{proof} 

\section{The proper case}\label{SecProper}

The final output of this section (Corollary \ref{CorPropOFSHTTDis}) will be that, under mild assumptions on the category $\cA,$ 
proper orthogonal factorization systems in $\cA$ correspond to homotopy torsion theories in $\Arr(\cA)$ with respect to the discrete 
structure $\cZ_1(\cA).$ First, we study how homotopy torsion theories react passing from a structure of nullhomotopies to the 
associated discrete structure.  We start with a simple general fact.

\begin{Proposition}\label{PropPropHTTChngNull}
Let $(\cB,\Theta)$ be a category with nullhomotopies.  
If $(\cT,\cF)$ is a quasi-proper weak $\Theta$-torsion theory, then it is also a $\cZ_1(\Theta)$-torsion theory.
\end{Proposition}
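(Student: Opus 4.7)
The plan is to prove both defining conditions of a $\cZ_1(\Theta)$-torsion theory (see Definition \ref{DefZTT}) by reusing the data already supplied by the weak $\Theta$-torsion theory $(\cT,\cF),$ relying on quasi-properness only at the point where a uniqueness statement is needed. No new construction should be required.

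First I would dispose of the annihilation condition \ref{DefZTT}.2. Fix $T \in \cT,$ $F \in \cF$ and an arrow $h \colon T \to F.$ By the definition of weak $\Theta$-torsion theory, $\Theta(h) \neq \emptyset,$ so by the description of the ideal associated with a structure of nullhomotopies (the reflection of Lemma \ref{LemmaNullId}), $h$ belongs to $\cZ_1(\Theta).$ This step is essentially tautological.

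Next I would prove that each $\Theta$-exact $(\cT,\cF)$-presentation
$$\xymatrix{T(X) \ar[rr]_{t_X} \ar@/^1.8pc/@{-->}[rrrr]^{}_{\xi_X \; \Downarrow} & & X \ar[rr]_-{f_X} & & F(X) }$$
also works as a $\cZ_1(\Theta)$-exact presentation, by showing $t_X$ is a $\cZ_1(\Theta)$-kernel of $f_X$ (the statement about $f_X$ being a $\cZ_1(\Theta)$-cokernel of $t_X$ follows dually, using that $f_X$ is an epimorphism). Membership $f_X \cdot t_X \in \cZ_1(\Theta)$ is witnessed by $\xi_X.$ For the universal property, given $f \colon W \to X$ with $f_X \cdot f \in \cZ_1(\Theta),$ pick any $\varphi \in \Theta(f_X \cdot f)$ and feed $(f,\varphi)$ into the universal property of the $\Theta$-kernel $(T(X),t_X,\xi_X);$ this yields an arrow $f' \colon W \to T(X)$ with $t_X \cdot f' = f$ (the extra compatibility $\xi_X \circ f' = \varphi$ is a bonus we can now forget).

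The only subtlety — and the one place where the hypothesis of quasi-properness enters — is uniqueness of $f'$ as a mere factorization through $t_X.$ In the $\Theta$-kernel this was uniqueness relative to the pair $(f,\varphi),$ and it could a priori fail once $\varphi$ is dropped; but $t_X$ is a monomorphism by Definition \ref{DefVarHTT}, so any two arrows into $T(X)$ that compose equally with $t_X$ already coincide. This is the crux of the argument, and it is really the reason the notion of quasi-properness was formulated as it was. Dualizing gives the $\cZ_1(\Theta)$-cokernel property of $f_X,$ completing the verification.
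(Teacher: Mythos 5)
Your proposal is correct and follows essentially the same route as the paper's proof: witness $f_X \cdot f \in \cZ_1(\Theta)$ by some $\varphi \in \Theta(f_X \cdot f)$, factor through the $\Theta$-kernel, and obtain uniqueness of the factorization from $t_X$ being a monomorphism (quasi-properness), with the cokernel side dual. You additionally spell out the annihilation condition \ref{DefZTT}.2, which the paper leaves implicit; otherwise the two arguments coincide.
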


\begin{proof}
Consider an object $X$ in $\cB$ and one of its $\Theta$-exact $(\cT,\cF)$-presentations
$$\xymatrix{T(X) \ar[rr]_{t_X} \ar@/^1.8pc/@{-->}[rrrr]^{}_{\xi_X \; \Downarrow} & & X \ar[rr]_-{f_X} & & F(X) }$$
We are going to prove that, forgetting the nullhomotopy $\xi_X,$ the same diagram provides a presentation which is $\cZ_1(\Theta)$-exact. 
We check the universal property of the $\cZ_1(\Theta)$-kernel, the one for the $\cZ_1(\Theta)$-cokernel is dual. 
Let $f \colon W \to X$ be an arrow such that $f_X \cdot f \in \cZ_1(\Theta).$ This means that there exists a nullhomotopy 
$\varphi \in \Theta(f_X \cdot f).$ Therefore, there exists a unique arrow $f' \colon W \to T(X)$ such that $t_X \cdot f' = f$ and $\xi_X \circ f' = \varphi.$ 
If $f'' \colon W \to T(X)$ is such that $t_X \cdot f'' = f,$ then $f' = f''$ because, by assumption, $t_X$ is a monomorphism.
\end{proof}

In order to invert Proposition \ref{PropPropHTTChngNull} when $\cB = \Arr(\cA),$ we need the discrete version of Lemma \ref{LemmaIso}.

\begin{Lemma}\label{LemmaIsoDisc}
Consider an adjunction 
$$\xymatrix{\cA \ar@<-0.5ex>[rr]_{\cU} & & \cB \ar@<-0.5ex>[ll]_{\cC} } ,\; \cC \dashv \cU$$
with unit $\gamma_B \colon B \to \cU\cC B$ and counit $\delta_A \colon \cC\cU A \to A$ and assume that $\cU$ is
full and faithful. Consider the structure of nullhomotopies $\Theta_{\gamma}$ on $\cB$ and the ideal $\cZ_1(\Theta_{\gamma}).$ If
$$\xymatrix{X \ar[rr]^{g} & & Y \ar[rr]^-{q_g} & & \cQ(g) }$$
is a $\cZ_1(\Theta_{\gamma})$-cokernel, then the arrow $\cC(q_g) \colon \cC Y \to \cC\cQ(g)$ is an isomorphism. 
\end{Lemma}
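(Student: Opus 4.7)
The plan is to mimic the proof of Lemma \ref{LemmaIso}, but with the weaker universal property of $\cZ_1(\Theta_{\gamma})$-cokernels replacing the $\Theta_{\gamma}$-cokernel one. The loss of the nullhomotopy data $\theta_g$ will be compensated for by the fact that $\cZ_1$-cokernels are automatically epimorphisms, which supplies the cancellation that, in the non-discrete proof, was obtained via Remark \ref{RemCanc}.

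Step by step: first, $\gamma_Y \cdot g = \cU\cC(g) \cdot \gamma_X$ factors through the $\Theta_{\gamma}$-trivial object $\cU\cC X$ (by Proposition \ref{PropMonComon}.2(b)), so $\gamma_Y \cdot g$ belongs to $\cZ_1(\Theta_{\gamma})$. The universal property of the $\cZ_1(\Theta_{\gamma})$-cokernel $q_g$ then produces a unique arrow $h \colon \cQ(g) \to \cU\cC Y$ such that $h \cdot q_g = \gamma_Y.$ Applying $\cC$ yields $\cC(h) \cdot \cC(q_g) = \cC(\gamma_Y);$ since $\cU$ is full and faithful, $\delta_{\cC Y}$ is an isomorphism, and the triangular identity $\delta_{\cC Y} \cdot \cC(\gamma_Y) = \id_{\cC Y}$ forces $\cC(\gamma_Y)$ to be its two-sided inverse, hence an isomorphism. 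Therefore $\cC(q_g)$ is a split monomorphism.

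Second, I would verify that $q_g$ is an epimorphism: given $f_1, f_2 \colon \cQ(g) \to Z$ with $f_1 \cdot q_g = f_2 \cdot q_g = f,$ one has $f \cdot g = f_1 \cdot (q_g \cdot g) \in \cZ_1(\Theta_{\gamma})$ because $q_g \cdot g$ belongs to the ideal $\cZ_1(\Theta_{\gamma}),$ and the uniqueness clause of the cokernel property then forces $f_1 = f_2.$ Using naturality of $\gamma,$ the chain $\cU\cC(q_g) \cdot h \cdot q_g = \cU\cC(q_g) \cdot \gamma_Y = \gamma_{\cQ(g)} \cdot q_g$ together with the epi-ness of $q_g$ gives $\cU\cC(q_g) \cdot h = \gamma_{\cQ(g)}.$ Applying $\cC$ and using again that $\cC(\gamma_{\cQ(g)})$ is an isomorphism, I deduce that $\cC\cU\cC(q_g)$ is a split epimorphism, and the naturality square of $\delta$ along $\cC(q_g)$ (whose vertical edges $\delta_{\cC Y}$ and $\delta_{\cC\cQ(g)}$ are isomorphisms) transfers the split-epi property to $\cC(q_g).$ Combined with the previous step, $\cC(q_g)$ is an isomorphism.

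The main hurdle I anticipate is precisely the moment where Remark \ref{RemCanc}, which balanced a condition on arrows against a condition on nullhomotopies, must be replaced by a plain epi cancellation; once one observes that the universal property of the $\cZ_1$-cokernel readily delivers this epi property, the remainder of the argument is a direct translation of the proof of Lemma \ref{LemmaIso} into the discrete setting.
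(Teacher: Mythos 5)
Your proposal is correct and follows essentially the same route as the paper: the same factorization $\gamma_Y\cdot g=\cU\cC(g)\cdot\gamma_X$ through the trivial object $\cU\cC X$ produces $h$ with $h\cdot q_g=\gamma_Y$ and hence the split-mono property of $\cC(q_g)$, and the same epi-ness of $q_g$ (which you rightly derive from the uniqueness clause of the $\cZ_1$-cokernel) finishes the job. The only difference is that the paper concludes more quickly: since $\cC$ is a left adjoint it preserves epimorphisms, so $\cC(q_g)$ is a split mono and an epi, hence an isomorphism, without needing to rerun the longer $\cU\cC(q_g)\cdot h=\gamma_{\cQ(g)}$ / naturality-of-$\delta$ argument from Lemma \ref{LemmaIso}.
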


\begin{proof}
We follow the same lines of the proof of Lemma \ref{LemmaIso}. Since, by naturality of $\gamma,$ 
we have $\gamma_Y \cdot g = \cU\cC(g) \cdot \gamma_X,$ the arrow $\gamma_Y \cdot g \colon X \to Y \to \cU\cC Y$ 
is in $\cZ_1(\Theta_{\gamma}).$ By the universal property of the $\cZ_1(\Theta_{\gamma})$-cokernel of $g,$ we get a unique arrow 
$h \colon \cQ(g) \to \cU\cC Y$ such that $h \cdot q_g = \gamma_Y.$ Using one of the triangular identities, it follows 
from the previous equation that $\cC(q_g)$ is a split mono as in the proof of \ref{LemmaIso}. But $\cC(q_g)$ is also 
an epimorphism because $q_g$ is an epimorphism (see \ref{TextNotUnique}.2) and $\cC$ is a left adjoint. 
\end{proof}

\begin{Proposition}\label{PropQPWHTTchang}
Let $\cA$ be a category. Consider in $\Arr(\cA)$ the structure of nullhomotopies $\cH(\cA)$ and the associated discrete structure 
$\cZ_1(\cA).$ If $(\cT,\cF)$ is a $\cZ_1(\cA)$-torsion theory, then it is also a quasi-proper weak $\cH(\cA)$-torsion theory.
\end{Proposition}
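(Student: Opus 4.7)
The plan is to verify each clause in the definition of a quasi-proper weak $\cH(\cA)$-torsion theory. Fix $(X,x,X_0) \in \Arr(\cA)$ with $\cZ_1(\cA)$-exact presentation $(T,u,T_0) \to (X,x,X_0) \to (F,v,F_0)$ carrying kernel arrow $(t,t_0)$ and cokernel arrow $(f,f_0)$. Applying Lemma \ref{LemmaIsoDisc} to the adjunction $\cC \dashv \cU$ of Example \ref{ExArr} shows that $\cC(f,f_0)=f_0$ is an isomorphism, and the dual of Lemma \ref{LemmaIsoDisc} applied to $\cU \dashv \cD$ shows that $\cD(t,t_0)=t$ is an isomorphism. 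Repleteness of $\cT$ and $\cF$ is inherited. Quasi-properness is immediate from \ref{TextNotUnique}.2 since $\cZ_1$-kernels are monomorphisms and $\cZ_1$-cokernels are epimorphisms in $\Arr(\cA)$. Weak orthogonality holds as well: for $h \colon T \to F$ with $T \in \cT$ and $F \in \cF$, the $\cZ_1(\cA)$-torsion condition gives $h \in \cZ_1(\cA)$, equivalently $\cH(\cA)(h) \neq \emptyset$ by Lemma \ref{LemmaNullId}.

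The heart of the argument is to equip the $\cZ_1(\cA)$-exact presentation with a nullhomotopy $\xi \in \cH(\cA)((f\cdot t, f_0 \cdot t_0))$ promoting it to a $\cH(\cA)$-exact one. Some $\xi$ exists since $(f\cdot t, f_0 \cdot t_0) \in \cZ_1(\cA)$. The plan for choosing $\xi$ canonically is to exploit the strong $\cH(\cA)$-kernel of $(f,f_0)$ provided by Example \ref{ExArrHKHC}: because $f_0$ is an isomorphism the relevant pullback exists trivially, giving the strong $\cH(\cA)$-kernel $((X,f,F), (\id_X, f_0^{-1}\cdot v), \id_F)$. The morphism $(\id_X, f_0^{-1}\cdot v) \colon (X,f,F) \to (X,x,X_0)$ post-composed with $(f,f_0)$ equals $(f,v) \in \cZ_1(\cA)$ via the nullhomotopy $\id_F$, so the $\cZ_1(\cA)$-kernel universal property of $(T,u,T_0)$ supplies a canonical morphism $(t^{-1}, h_0') \colon (X,f,F) \to (T,u,T_0)$. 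By Remark \ref{RemDiscrHK} this exhibits $(T,u,T_0)$ as a retract of $(X,f,F)$; take $\xi$ to be the second component of the corresponding split mono $(t,\xi) \colon (T,u,T_0) \to (X,f,F)$, so that $h_0' \cdot \xi = \id_{T_0}$.

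To verify the $\cH(\cA)$-kernel universal property of the triple $((T,u,T_0),(t,t_0),\xi)$, given test data $(g,g_0) \colon (W,w,W_0) \to (X,x,X_0)$ together with $\varphi \in \cH(\cA)((f,f_0)\cdot(g,g_0))$, the relation $\varphi \cdot w = f \cdot g$ makes $(g,\varphi)$ a valid morphism $(W,w,W_0) \to (X,f,F)$. The candidate factorization is $(g',g_0') := (t^{-1},h_0') \cdot (g,\varphi) = (t^{-1}\cdot g,\, h_0'\cdot \varphi)$. The commutativity $(t,t_0)\cdot (g',g_0') = (g,g_0)$ follows from $f_0^{-1}\cdot v \cdot \varphi = g_0$, which in turn follows from $v\cdot \varphi = f_0 \cdot g_0$. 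Uniqueness of $(g',g_0')$ is forced by $t$ being an isomorphism (fixing the first component) and $t_0$ being a monomorphism (fixing the second).

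The main obstacle is verifying the nullhomotopy compatibility $\xi \cdot g_0' = \xi \cdot h_0' \cdot \varphi = \varphi$. The composite $\xi \cdot h_0' \colon F \to F$ is an idempotent thanks to $h_0' \cdot \xi = \id_{T_0}$, and the required identity says this idempotent fixes every $\varphi$ of the form considered. Both $(g,\varphi)$ and $(g,\xi\cdot h_0'\cdot \varphi) = (t,\xi)\cdot (g',g_0')$ are morphisms $(W,w,W_0) \to (X,f,F)$ whose post-composition with the kernel arrow $(\id_X, f_0^{-1}\cdot v)$ equals $(g,g_0)$, and invoking the strong $\cH(\cA)$-kernel universal property of $(X,f,F)$ against the nullhomotopy data carried by $\varphi$ forces these two morphisms to agree in the second component, yielding $\xi\cdot h_0'\cdot \varphi = \varphi$. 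The $\cH(\cA)$-cokernel property is established by the dual argument, interchanging the roles of $\cC \dashv \cU$ and $\cU \dashv \cD$ and of $(T,u,T_0)$ with $(F,v,F_0)$.
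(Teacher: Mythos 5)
Your setup is sound and matches the paper's strategy up to a point: the use of Lemma \ref{LemmaIsoDisc} and its dual to get $t$ and $f_0$ invertible, the identification of the $\cH(\cA)$-kernel of $(f,f_0)$ as $(X,f,F)$ with kernel arrow $(\id_X,f_0^{-1}\cdot v)$ and nullhomotopy $\id_F$, the retraction $(t^{-1},h_0')$ with $h_0'\cdot \xi=\id_{T_0}$, and the easy clauses (repleteness, quasi-properness, weak orthogonality) are all correct. The candidate factorization $(t^{-1}\cdot g,\,h_0'\cdot\varphi)$ and its compatibility with $(t,t_0)$ are also verified correctly.

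The gap is in the step $\xi\cdot h_0'\cdot\varphi=\varphi.$ Your justification is circular. Knowing that $(g,\varphi)$ and $(g,\xi\cdot h_0'\cdot\varphi)$ both post-compose with $(\id_X,f_0^{-1}\cdot v)$ to give $(g,g_0)$ is not enough to force them to be equal: by Remark \ref{RemCanc}.1, the cancellation property of a homotopy kernel requires \emph{in addition} that the two morphisms compose with the kernel nullhomotopy $\id_F$ to the same nullhomotopy, and that extra condition is precisely $\varphi=\xi\cdot h_0'\cdot\varphi,$ i.e.\ the identity you are trying to prove. The kernel arrow $(\id_X,f_0^{-1}\cdot v)$ is in general not a monomorphism in $\Arr(\cA)$ (its second component is $f_0^{-1}\cdot v,$ and $v$ need not be monic), so you cannot cancel it. Nor does the \emph{strong} kernel property help: it produces nullhomotopies on morphisms into $(X,f,F),$ it does not separate two such morphisms. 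At this stage you only know that $\xi$ is a split monomorphism, so $\xi\cdot h_0'$ is an idempotent of $F$ fixing $f$ and compatible with $v,$ which does not force it to be the identity.

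What is missing is the second half of the paper's argument: run the dual construction with the $\cH(\cA)$-cokernel of $(t,t_0)$ (which exists because $t$ is an isomorphism) and the $\cZ_1(\cA)$-cokernel universal property of $(F,v,F_0)$ to show that the \emph{same} $\xi$ is also a split epimorphism. Then $\xi$ is an isomorphism, so $\xi\cdot h_0'=\xi\cdot h_0'\cdot\xi\cdot\xi^{-1}=\xi\cdot\xi^{-1}=\id_F$ and your compatibility step becomes trivial; equivalently, $(t,\xi)$ is then an isomorphism onto the $\cH(\cA)$-kernel and the universal property transports at once. Note also that your closing sentence, deferring the cokernel property to ``the dual argument,'' cannot be postponed in this way: the dual argument is needed \emph{before} the kernel verification can be completed, and it must be carried out with the same nullhomotopy $\xi$ on both sides, as in the paper.
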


\begin{proof}
Fix an arrow $x \colon X \to X_0$ in $\cA$ and consider a $\cZ_1(\cA)$-exact $(\cT,\cF)$-presentation of the object $(X,x,X_0) \colon$
$$\xymatrix{T(x) \ar[r]^-{t} \ar[d]_{t_x} & X \ar[r]^-{f} \ar[d]_{x} & F(x) \ar[d]^{f_x} \\
T_0(x) \ar[r]_-{t_0} & X_0 \ar[r]_-{f_0} & F_0(x) }$$
By Lemma \ref{LemmaIsoDisc} and its dual, $t$ and $f_0$ are isomorphisms. Moreover, $(t,t_0)$ is a monomorphism (because it is a 
$\cZ_1(\cA)$-kernel) and $(f,f_0)$ is an epimorphism (because it is a $\cZ_1(\cA)$-cokernel). Since $(f,f_0) \cdot (t,t_0) \in \cZ_1(\cA),$
there exists an arrow $\xi_x \colon T_0(x) \to F(x)$ such that $f \cdot t = \xi_x \cdot t_x$ and $f_0 \cdot t_0 = f_x \cdot \xi_x.$
We are going to prove that the above diagram, completed with the arrow $\xi_x,$ is a $\cH(\cA)$-exact $(\cT,\cF)$-presentation
of $(X,x,X_0).$ We follow as far as possible the proof of Proposition \ref{PropOFSHTT}. \\
First, observe that, since $f_0$ is an isomorphism, we can use \ref{ExArrHKHC} and the $\cH(\cA)$-kernel of 
$(f,f_0) \colon (X,x,X_0) \to (F(x),f_x,F_0(x))$ is
$$\xymatrix{X \ar[rr]^-{\id} \ar[d]_{f} & & X \ar[r]^-{f} \ar[d]_<<<{x} & F(x) \ar[d]^{f_x} \\
F(x) \ar[rrru]^<<<<<<<{\id} \ar[rr]_-{f_0^{-1} \cdot f_x} & & X_0 \ar[r]_-{f_0} & F_0(x) }$$
By its universal property, we get a unique arrow $(i,i_0) \colon (T(x),t_x,T_0(x)) \to (X,f,F(x))$ such that
$(\id_X,f_0^{-1} \cdot f_x) \cdot (i,i_0) = (t,t_0)$ and $\id_{F(x)} \circ (i,i_0) = \xi_x.$ This implies that $i=t,$ so that $i$ is an isomorphism,
and $i_0 = \xi_x.$ Moreover, by the universal property of the $\cZ_1(\cA)$-kernel of $(f,f_0),$ we get a unique arrow 
$(j,j_0) \colon (X,f,F(x)) \to (T(x),t_x,T_0(x))$ such that $(t,t_0) \cdot (j,j_0) = (\id_X,f_0^{-1} \cdot f_x).$ Since $(t,t_0) \cdot (j,j_0) \cdot (i,i_0) = (t,t_0)$
and $(t,t_0)$ is a monomorphism, we have $(j,j_0) \cdot (i,i_0) = (\id_{T(x)},\id_{T_0(x)}).$ This implies that $i_0$ is a split monomorphism,
that is, $\xi_x$ is a split monomorphism. \\
Second, observe that, since $t$ is an isomorphism, we can use the dual of \ref{ExArrHKHC} and the $\cH(\cA)$-cokernel of 
$(t,t_0) \colon (T(x),t_x,T_0(x)) \to (X,x,X_0)$ is
$$\xymatrix{T(x) \ar[r]^{t} \ar[d]_{t_x} & X \ar[rr]^-{t_x \cdot t^{-1}} \ar[d]_<<<<{x} & & T_0(x) \ar[d]^{t_0} \\
T_0(x) \ar[r]_{t_0} \ar[rrru]_>>>>>>>>>>{\id} & X_0 \ar[rr]_-{\id} & & X_0 }$$
By its universal property, we get a unique arrow $(m,m_0) \colon (T_0(x),t_0,X_0) \to (F(x),f_x,F_0(x))$ such that
$(m,m_0) \cdot (t_x \cdot t^{-1} ,\id_{X_0}) = (f,f_0)$ and $(m,m_0) \circ \id_{T_0(x)} = \xi_x.$ This implies $m_0=f_0,$ so that $m_0$ is an 
isomorphism, and $m=\xi_x.$ Moreover, by the universal property of the $\cZ_1(\cA)$-cokernel of $(t,t_0),$ we get a unique arrow 
$(n,n_0) \colon (F(x),f_x,F_0(x)) \to (T_0(x),t_0,X_0)$ such that $(n,n_0) \cdot (f,f_0) = (t_x \cdot t^{-1},\id_{X_0}).$ Since
$(m,m_0) \cdot (n,n_0) \cdot (f,f_0) = (f,f_0)$ and $(f,f_0)$ is an epimorphism, we have $(m,m_0) \cdot (n,n_0) = (\id_{F(x)},\id_{F_0(x)}).$
This implies that $m$ is a split epimorphism, that is, $\xi_x$ is a split epimorphism.\\
We can conclude that $\xi_x,$ being a split monomorphism and a split epimorphism, is an isomorphism. This implies that both $(i,i_0)$ and
$(m,m_0)$ are isomorphisms and, therefore, 
$$\xymatrix{T(x) \ar[r]^-{t} \ar[d]_{t_x} & X \ar[r]^-{f} \ar[d]_<<<{x} & F(x) \ar[d]^{f_x} \\
T_0(x) \ar[rru]^<<<<<{\xi_x} \ar[r]_-{t_0} & X_0 \ar[r]_-{f_0} & F_0(x) }$$
is a $\cH(\cA)$-exact $(\cT,\cF)$-presentation of $(X,x,X_0),$ and it is also quasi-proper because $(t,t_0)$ is a monomorphism and $(f,f_0)$
is an epimorphism.\\
As far as the weak version of condition 3 in Definition \ref{DefHTT} is concerned, observe that, if $(h,h_0) \colon (T,t,T_0) \to (F,f,F_0)$
is an arrow in $\Arr(\cA)$ with $(T,t,T_0) \in \cT$ and $(F,f,F_0) \in \cF,$ then $(h,h_0) \in \cZ_1(\cA),$ so that $\cH(\cA)(h,h_0) \neq \emptyset.$
\end{proof} 

If we assume that the category $\cA$ has an initial object or a terminal object, then Proposition \ref{PropQPWHTTchang} 
can be slightly improved. 

\begin{Proposition}\label{PropQPHTTchang}
Let $\cA$ be a category with an initial or a terminal object. Consider in $\Arr(\cA)$ the structure of nullhomotopies 
$\cH(\cA)$ and the associated discrete structure $\cZ_1(\cA).$ If $(\cT,\cF)$ is a $\cZ_1(\cA)$-torsion theory, 
then it is also a quasi-proper $\cH(\cA)$-torsion theory.
\end{Proposition}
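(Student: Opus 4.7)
My plan is to chain together existing results, with the extra hypothesis on $\cA$ intervening only at the end to upgrade ``weak'' to ``strong.'' First I would apply Proposition \ref{PropQPWHTTchang} to deduce that $(\cT,\cF)$ is already a \emph{quasi-proper weak} $\cH(\cA)$-torsion theory in $\Arr(\cA)$. The only gap between this and being a genuine quasi-proper $\cH(\cA)$-torsion theory is the strengthening of condition \ref{DefHTT}.3: I need to know that for every arrow $(h,h_0)\colon (T,t,T_0) \to (F,f,F_0)$ with $(T,t,T_0) \in \cT$ and $(F,f,F_0) \in \cF$ the set $\cH(\cA)(h,h_0)$ is a \emph{singleton}, whereas Proposition \ref{PropQPWHTTchang} only provides non-emptiness.

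Next I would transport the picture to factorization systems. Via Proposition \ref{PropPropWOFSHTT}, the quasi-proper weak $\cH(\cA)$-torsion theory $(\cT,\cF)$ corresponds to a quasi-proper weakly orthogonal factorization system $(\cE,\cM)$ in $\cA$, where (as in the proof of Proposition \ref{PropOFSHTT}) the class $\cE$ consists of the arrows $t\colon T \to T_0$ with $(T,t,T_0) \in \cT$ and $\cM$ consists of the arrows $f\colon F \to F_0$ with $(F,f,F_0) \in \cF$. At this level the missing uniqueness of $\cH(\cA)(h,h_0)$ translates exactly into the missing uniqueness of the diagonal filler in Definition \ref{DefFS}.3.

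Now I would invoke Lemma \ref{LemmaTermInit}.2: since $\cA$ has an initial or terminal object, the quasi-proper weakly orthogonal factorization system $(\cE,\cM)$ is automatically orthogonal. Finally, I would apply Corollary \ref{CorPropOFSHTT} in reverse to read this quasi-proper orthogonal factorization system back as a quasi-proper $\cH(\cA)$-torsion theory, which is still $(\cT,\cF)$ because the bijective correspondence of Proposition \ref{PropOFSHTT} sends $(\cE,\cM)$ to $(\cT,\cF)$ on the nose.

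There is no genuinely hard step in the argument; the whole proof is a four-step concatenation of previously established correspondences. The one point that requires slight attention is verifying that the translations ``$(\cT,\cF) \leftrightarrow (\cE,\cM)$'' of Propositions \ref{PropOFSHTT} and \ref{PropPropWOFSHTT} do compose to the identity on the pair of full subcategories, so that the quasi-proper $\cH(\cA)$-torsion theory produced at the end really is the $(\cT,\cF)$ we started with. This is transparent from the explicit definitions of $\cE$ and $\cM$ in terms of $\cT$ and $\cF$ and the repleteness of the subcategories, and it is the only place where one should pause to make the bookkeeping explicit.
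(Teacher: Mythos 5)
Your argument is correct, and it reaches the conclusion by a different decomposition than the paper's. The paper proves the uniqueness of the nullhomotopy directly inside $\Arr(\cA)$: continuing the construction of Proposition \ref{PropQPWHTTchang}, it uses the terminal (or initial) object to see that $\cD$ (or $\cC$) has a right (left) adjoint and hence preserves epimorphisms (monomorphisms), deduces from the commutative square $f \cdot t = \xi_x \cdot t_x$ --- with $t,\xi_x$ isomorphisms and $(f,f_0)$ an epimorphism --- that the structure map of every object of $\cT$ is an epimorphism of $\cA$, and concludes that a diagonal $\lambda$ with $\lambda \cdot t = h$ is unique. You instead transport $(\cT,\cF)$ to a quasi-proper weakly orthogonal factorization system via Proposition \ref{PropPropWOFSHTT}, upgrade it to an orthogonal one by Lemma \ref{LemmaTermInit}.2 --- whose proof is exactly the same ``$\cD$ preserves epimorphisms'' argument --- and come back via Corollary \ref{CorPropOFSHTT}. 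The two proofs therefore rest on the same key fact but package it on opposite sides of the dictionary: yours is shorter given the machinery already in place (none of the results you cite depends on the present proposition, so there is no circularity) and mirrors the style of the paper's own proof of Corollary \ref{CorPropOFSHTTDis}, while the paper's direct version makes explicit the useful intermediate fact that objects of $\cT$ have epimorphic structure maps. You also correctly isolate the only delicate point, namely that the round trip $(\cT,\cF) \mapsto (\cE,\cM) \mapsto (\cT,\cF)$ is the identity; this does hold, since an object of $\Arr(\cA)$ literally is an arrow of $\cA$ and $\cT,\cF$ are full and replete.
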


\begin{proof}
We proceed as in the proof of Proposition \ref{PropQPWHTTchang}, so that it remains to check the uniqueness of the
nullhomotopy in condition 3 of Definition \ref{DefHTT}. Assume that the category $\cA$ has a terminal object
(if the category $\cA$ has an initial object, the argument is dual). This implies that the functor $\cD \colon \Arr(\cA) \to \cA$ 
preserves epimorphisms (see \ref{TextTermInit}) so that, in the $\cH(\cA)$-exact $(\cT,\cF)$-presentation 
$$\xymatrix{T(x) \ar[r]^-{t} \ar[d]_{t_x} & X \ar[r]^-{f} \ar[d]_<<<{x} & F(x) \ar[d]^{f_x} \\
T_0(x) \ar[rru]^<<<<<{\xi_x} \ar[r]_-{t_0} & X_0 \ar[r]_-{f_0} & F_0(x) }$$
the arrow $f = \cD(f,f_0)$ is an epimorphism. Now in the commutative square
$$\xymatrix{T(x) \ar[r]^{t} \ar[d]_{t_x} & X \ar[d]^{f} \\ T_0(x) \ar[r]_{\xi_x} & F(x) }$$
$t$ and $\xi_x$ are isomorphisms and $f$ is an epimorphism, so that $t_x$ is an epimorphism. \\
Consider now an object $(X,x,X_0) \in \cT.$ As $\cZ_1(\cA)$-exact $(\cT,\cF)$-presentation we can choose one of the form
$$\xymatrix{X \ar[r]^{\id} \ar[d]_{x} & X \ar[r]^{f} \ar[d]^{x} & F(x) \ar[d]^{f_x} \\
X_0 \ar[r]_{\id} & X_0 \ar[r]_{f_0} & F_0(x) }$$
(see Corollary \ref{CorCarTF}). If we apply the previous argument to this presentation, we can deduce that such an arrow 
$x \colon X \to X_0$ is an epimorphism. \\
Finally, consider an arrow $(h,h_0) \colon (T,t,T_0) \to (F,f,F_0)$ with domain $(T,t,T_0) \in \cT$ and codomain $(F,f,F_0) \in \cF.$ 
Since $(\cT,\cF)$ is a $\cZ_1(\cA)$-torsion theory, we necessarily have that $(h,h_0) \in \cZ_1(\cA).$ This means that there exists 
a nullhomotopy $\lambda \in \cH(\cA)(h,h_0),$ that is, an arrow $\lambda \colon T_0 \to F$ such that the diagram
$$\xymatrix{T \ar[rr]^{h} \ar[d]_{t} & & F \ar[d]^{f} \\ T_0 \ar[rru]^{\lambda} \ar[rr]_{h_0} & & F_0 }$$
commutes in each part. Since, by the previous argument, $t$ is an epimorphism, such an arrow $\lambda$ is unique. 
\end{proof} 

\begin{Corollary}\label{CorPropOFSHTTDis}
Let $\cA$ be a category with an initial object and a terminal object. Consider the category $\Arr(\cA)$ equipped 
with the discrete structure of nullhomotopies $\cZ_1(\cA).$ Proper orthogonal factorization systems in $\cA$ 
correspond to $\cZ_1(\cA)$-torsion theories in $\Arr(\cA).$
\end{Corollary}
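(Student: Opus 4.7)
The plan is to compose three correspondences already established in the paper, taking advantage of the hypothesis that $\cA$ has both an initial and a terminal object.

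First I would observe that, under this hypothesis, the notions of proper orthogonal factorization system and quasi-proper orthogonal factorization system coincide. Lemma \ref{LemmaTermInit}.1 gives that every proper orthogonal factorization system is quasi-proper. Conversely, Lemma \ref{LemmaTermInit}.3 (whose assumption is precisely the existence of an initial and a terminal object in $\cA$) gives that every quasi-proper weakly orthogonal factorization system in $\cA$ is in fact proper and orthogonal, because the existence of a terminal object forces arrows in $\cE$ to be epimorphisms and the existence of an initial object forces arrows in $\cM$ to be monomorphisms. So the collection of proper orthogonal factorization systems in $\cA$ coincides with the collection of quasi-proper orthogonal factorization systems in $\cA$.

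Next I would invoke Corollary \ref{CorPropOFSHTT}, which provides the bijection between quasi-proper orthogonal factorization systems in $\cA$ and quasi-proper $\cH(\cA)$-torsion theories in $\Arr(\cA)$.

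To close the chain, I would identify quasi-proper $\cH(\cA)$-torsion theories in $\Arr(\cA)$ with $\cZ_1(\cA)$-torsion theories in $\Arr(\cA)$. One direction is Proposition \ref{PropPropHTTChngNull} specialized to $\Theta = \cH(\cA)$, which says that every quasi-proper (in particular, every quasi-proper non-weak) $\cH(\cA)$-torsion theory is a $\cZ_1(\cA)$-torsion theory. The converse is Proposition \ref{PropQPHTTchang}, whose hypothesis on $\cA$ is clearly satisfied here. Composing these three identifications yields the stated correspondence. I do not expect any serious obstacle: the real work has been carried out in Sections \ref{SecFSHTT}, \ref{SecQP} and in the first half of this section, and what remains is only the bookkeeping that the three bijections agree on the underlying pair $(\cT,\cF)$ (equivalently, on the underlying classes $(\cE,\cM)$), which is already built into each of the cited results.
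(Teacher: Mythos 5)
Your proposal is correct and follows the same strategy as the paper, namely composing the correspondences already established in Sections \ref{SecFSHTT}--\ref{SecProper}; the only difference is the path through the synopsis diagram: you go via quasi-proper \emph{orthogonal} factorization systems and quasi-proper (non-weak) $\cH(\cA)$-torsion theories, using Corollary \ref{CorPropOFSHTT} and Proposition \ref{PropQPHTTchang}, whereas the paper goes via the \emph{weak} quasi-proper notions, using Proposition \ref{PropPropWOFSHTT} and Proposition \ref{PropQPWHTTchang}, and invokes the initial-and-terminal-object hypothesis only at the final step through Lemma \ref{LemmaTermInit}.3. Both routes are valid and all your citations apply as stated.
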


\begin{proof}
By Propositions \ref{PropPropHTTChngNull} and \ref{PropQPWHTTchang}, $\cZ_1(\cA)$-torsion theories  correspond 
to quasi-proper weak $\cH(\cA)$-torsion theories which, by \ref{PropWOFSWHTT}, correspond to quasi-proper weakly 
orthogonal factorization systems. Finally, if $\cA$ has an initial object and a terminal object, quasi-proper weakly 
orthogonal factorization systems coincide with proper orthogonal factorization systems by Lemma \ref{LemmaTermInit}.
\end{proof} 

\section{Some remarks on the pointed case}\label{SecPointed}

In this section, we will denote the kernel and the cokernel (in the usual sense) of an arrow $g \colon X \to Y$ 
in a category $\cA$ with a zero object as
$$\xymatrix{K(g) \ar[r]^-{k_g} & X \ar[r]^-{g} & Y \ar[r]^-{c_g} & C(g) }$$
This notation is coherent with the one in \ref{DefZK}, because kernels coincide with 
$\cZ_1(0)$-kernels (and cokernels with $\cZ_1(0)$-cokernels), where $\cZ_1(0)$ is the ideal of zero arrows in $\cA,$
as already pointed out in Remark \ref{RemPointed}. 

\begin{Text}\label{TextDLK}{\rm
If we assume that a category $\cA$ has a zero object and kernels, we obtain a new string of adjunctions
$$\xymatrix{ \cA \ar[rr]|-{\Lambda} & & \Arr(\cA) \ar@<-1.5ex>[ll]_-{\cD} \ar@<1.5ex>[ll]^-{\Ker} }
\;\;\;\;\; \cD \dashv \Lambda \dashv \Ker$$
The functor $\cD \colon \Arr(\cA) \to \cA$ is the one of Example \ref{ExArr}. The full and faithful functor $\Lambda \colon \cA \to \Arr(\cA),$
defined by $\Lambda(X) = (X,!,0),$ is the one introduced in \ref{TextTermInit} (we have changed notation writing 0 for the 
zero object, whereas in \ref{TextTermInit} we used $\ast$ for the terminal object). The functor $\Ker \colon \Arr(\cA) \to \cA$ 
is defined on objects by $\Ker(X,x,X_0) = K(x)$ and is extended to arrows in the obvious way. \\
The unit $\gamma_{(X,x,X_0)} \colon (X,x,X_0) \to \Lambda\cD(X,x,X_0)$ of $\cD \dashv \Lambda$ is
$\xymatrix{X \ar[r]^{\id} \ar[d]_x & X \ar[d]^{!} \\ X_0 \ar[r]_{!} & 0}$ \\
The counit $\beta_{(Y,y,Y_0)} \colon \Lambda\Ker(Y,y,Y_0) \to (Y,y,Y_0)$ of $\Lambda \dashv \Ker$ is
$\xymatrix{K(y) \ar[r]^-{k_y} \ar[d]_{!} & Y \ar[d]^y \\ 0 \ar[r]_{!} & Y_0}$ \\
Following the constructions explained in \ref{TextRad} and applying Proposition \ref{PropCUD}, it turns out that the three isomorphic 
structures of nullhomotopies induced on $\Arr(\cA)$ by these adjunctions are discrete (this follows from the fact that the unit 
$\gamma_{(X,x,X_0)}$ is an epimorphism or, equivalently, by the fact that the counit $\beta_{(Y,y,Y_0)}$ is a monomorphism, cf. Remark 
\ref{RemTriv}.2) and the corresponding ideal of arrows is
$$\cZ_1(\Lambda) = \{ (g,g_0) \colon (X,x,X_0) \to (Y,y,Y_0) \mid g_0 = 0 \colon X_0 \to Y_0 \}$$
Following Proposition \ref{PropExistenceHKHC} and Lemma \ref{LemmaUnitHCoker}, we can describe $\cZ_1(\Lambda)$-kernels and, 
if we assume also the existence of cokernels in $\cA,$ $\cZ_1(\Lambda)$-cokernels in $\Arr(\cA) \colon$
$$\xymatrix{K(y \cdot g) \ar[r]^-{k_{y \cdot g}} \ar[d]_{x'} & X \ar[r]^{g} \ar[d]_{x} & Y \ar[d]^{y} \\
K(g_0) \ar[r]_-{k_{g_0}} & X_0 \ar[r]_{g_0} & Y_0 }
\;\;\;\;\;\;\;\;\;\;
\xymatrix{X \ar[r]^-{g} \ar[d]_{x} & Y \ar[r]^-{\id} \ar[d]^{y} & Y \ar[d]^{c_{g_0} \cdot y} \\
X_0 \ar[r]_-{g_0} & Y_0 \ar[r]_-{c_{g_0}} & C(g_0) }$$
where $x'$ is the unique arrow such that the first square on the left commutes.
}\end{Text} 

Recall, from Remark \ref{RemPointed}, that $\cZ_1(0)$-torsion theories are the usual torsion theories in a category with zero object.

\begin{Proposition}\label{PropPointed} 
Let $\cA$ be a category with zero object, kernels and cokernels.
Every $\cZ_1(0)$-torsion theory in $\cA$ induces a $\cZ_1(\Lambda)$-torsion theory in $\Arr(\cA).$
\end{Proposition}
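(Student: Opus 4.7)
The plan is to push the torsion theory from $\cA$ to $\Arr(\cA)$ through the codomain functor $\cC$. I will define
$$\widetilde{\cT} := \{(X, x, X_0) \in \Arr(\cA) \mid X_0 \in \cT\}, \qquad \widetilde{\cF} := \{(X, x, X_0) \in \Arr(\cA) \mid X_0 \in \cF\}.$$
Both are replete since $\cT$ and $\cF$ are. Observe that $\widetilde{\cT} \cap \widetilde{\cF}$ consists of those $(X, x, X_0)$ with $X_0 \in \cT \cap \cF$, i.e.\ $X_0 \simeq 0$, which is precisely the class of $\cZ_1(\Lambda)$-trivial objects $\Lambda(X) = (X, !, 0)$; this is the compatibility required by Corollary \ref{CorCarTrivObj} in view of Corollary \ref{CorHTTpreTTbis}.

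For the $(\widetilde{\cT}, \widetilde{\cF})$-presentation of an arbitrary object $(X, x, X_0)$, the base is naturally presented by its torsion sequence $\tau(X_0) \xrightarrow{t_{X_0}} X_0 \xrightarrow{f_{X_0}} \phi(X_0)$ in $\cA$, with $t_{X_0} = k_{f_{X_0}}$ and $f_{X_0} = c_{t_{X_0}}$. At the level of domains I will set $T := K(f_{X_0} \cdot x)$, with kernel inclusion $k \colon T \to X$, and let $t \colon T \to \tau(X_0)$ be induced by the universal property of $t_{X_0}$ (using that $f_{X_0} \cdot x \cdot k = 0$). The candidate presentation is
$$\xymatrix{T \ar[r]^-{k} \ar[d]_-{t} & X \ar[r]^-{\id_X} \ar[d]_-{x} & X \ar[d]^-{f_{X_0} \cdot x} \\ \tau(X_0) \ar[r]_-{t_{X_0}} & X_0 \ar[r]_-{f_{X_0}} & \phi(X_0)}$$

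Checking that this is a $\cZ_1(\Lambda)$-exact $(\widetilde{\cT}, \widetilde{\cF})$-presentation will reduce to substitution into the explicit formulas of \ref{TextDLK}. Applying the kernel formula to $(\id_X, f_{X_0})$ gives $(K(f_{X_0} \cdot x \cdot \id_X), x', K(f_{X_0})) = (T, t, \tau(X_0))$ with arrow $(k, t_{X_0})$; applying the cokernel formula to $(k, t_{X_0})$ gives $(X, c_{t_{X_0}} \cdot x, C(t_{X_0})) = (X, f_{X_0} \cdot x, \phi(X_0))$ with arrow $(\id_X, f_{X_0})$. Membership of the two outer objects in $\widetilde{\cT}$ and $\widetilde{\cF}$ respectively is built into the construction.

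For condition 2 of Definition \ref{DefZTT}, any $(h, h_0) \colon (T, t, T_0) \to (F, f, F_0)$ with $T_0 \in \cT$ and $F_0 \in \cF$ automatically has $h_0 = 0$, because a torsion theory in $\cA$ satisfies $\mathrm{Hom}_{\cA}(T_0, F_0) = 0$; hence $(h, h_0) \in \cZ_1(\Lambda)$. The only conceptual choice in the whole argument is to detect $\cT, \cF$ through the codomain functor rather than the domain functor, so that the explicit $\cZ_1(\Lambda)$-kernel and cokernel from \ref{TextDLK} fit the torsion sequence in $\cA$; once that is decided the rest is direct bookkeeping and should be the easiest step of the proof.
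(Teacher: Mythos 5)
Your proposal is correct and follows essentially the same route as the paper: the same subcategories $\cT_{\Lambda},\cF_{\Lambda}$ detected through the codomain, the same presentation diagram built from the $(\cT,\cF)$-presentation of $X_0$ via the explicit $\cZ_1(\Lambda)$-(co)kernel formulas of \ref{TextDLK}, and the same argument that $h_0=0$ forces $(h,h_0)\in\cZ_1(\Lambda)$.
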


\begin{proof}
Let $(\cT,\cF)$ be a $\cZ_1(0)$-torsion theory in $\cA.$ We put:
\begin{enumerate}
\item[-] $\cT_{\Lambda}=$ the full subcategory of $\Arr(\cA)$ spanned by the objects $(X,x,X_0)$ with $X_0 \in \cT,$
\item[-] $\cF_{\Lambda}=$ the full subcategory of $\Arr(\cA)$ spanned by the objects $(Y,y,Y_0)$ with $Y_0 \in \cF.$
\end{enumerate}
1) $\cT_{\Lambda}$ and $\cF_{\Lambda}$ are replete because $\cT$ and $\cF$ are replete. \\
2) Consider an object $(A,a,A_0)$ in $\Arr(\cA).$ Its $\cZ_1(\Lambda)$-exact $(\cF_{\Lambda},\cT_{\Lambda})$-presentation
is completely determined by the $\cZ_1(0)$-exact $(\cT,\cF)$-presentation of $A_0$
$$\xymatrix{T(A_0) \ar[r]^-{t_{A_0}} & A_0 \ar[r]^-{f_{A_0}} & F(A_0) }$$ 
and by the description of $\cZ_1(\Lambda)$-kernels and $\cZ_1(\Lambda)$-cokernels given in \ref{TextDLK}. The resulting 
diagram in $\Arr(\cA)$ is
$$\xymatrix{K(f_{A_0} \cdot a) \ar[rr]^-{k_{f_{A_0} \cdot a}} \ar[d]_{a'} & & A \ar[rr]^-{\id} \ar[d]^{a} & & A \ar[d]^{f_{A_0} \cdot a} \\
T(A_0) \ar[rr]_-{t_{A_0}} & & A_0 \ar[rr]_-{f_{A_0}} & & F(A_0) }$$
where $a'$ is the unique arrow such that the first square commutes. To check that this is the needed presentation of $(A,a,A_0)$
is easy, keeping in mind that $t_{A_0}$ is the kernel of $f_{A_0}$ and $f_{A_0}$ is the cokernel of $t_{A_0}.$ \\
3) Consider an arrow $(g,g_0) \colon (X,x,X_0) \to (Y,y,Y_0)$ with domain in $\cT_{\Lambda}$ and codomain in $\cF_{\Lambda}.$
The arrow $g_0 \colon X_0 \to Y_0$ has domain in $\cT$ and codomain in $\cF,$ so that $g_0$ is a zero arrow and then $(g,g_0)$
is in $\cZ_1(\Lambda).$
\end{proof}

\begin{Remark}\label{RemPointedInvert}{\rm
Observe that a $\cZ_1(\Lambda)$-torsion theory $(\cT_{\Lambda},\cF_{\Lambda})$ in $\Arr(\cA)$ is induced by a $\cZ_1(0)$-torsion 
theory in $\cA$ as in Proposition \ref{PropPointed} if and only if, given objects $(X,x,X_0) \in \cT_{\Lambda}$ and $(Y,y,Y_0) \in \cF_{\Lambda},$ 
the unique arrow $X_0 \to Y_0$ is the zero arrow. 
}\end{Remark} 

\section{A panoramic view}\label{SecPanoramic}

Sections \ref{SecFSHTT}, \ref{SecQP} and \ref{SecProper} have been devoted to compare various types of factorization systems in a category $\cA$ to various 
types of homotopy torsion theories in $\Arr(\cA).$ If the reader may be confused by all the variants involved, he/she can refer to the following 
panoramic view. 

The acronyms are as follows: Q = quasi, P = proper, W = weak or weakly, O = orthogonal, FS = factorization system, HTT = homotopy torsion theory.
The unlabelled arrows are obvious implications. The name of the other arrows are internal references. 
Recall that items \ref{LemmaTermInit}.2 and \ref{PropQPHTTchang} require that $\cA$ has an initial object {\it or} a terminal object, whereas 
items \ref{LemmaTermInit}.3 and \ref{CorPropOFSHTTDis} require that $\cA$ has an initial object {\it and} a terminal object.
\begin{enumerate}
\item Synopsis for factorization systems:
$$\xymatrix{ && \mbox{WOFS} \\
\mbox{OFS} \ar[urr] &&&& \mbox{QPWOFS} \ar[llu] \ar@<0.5ex>[lld]^{10.4.2} \ar@/^1.5pc/[lldd]^{10.4.3} \\
&& \mbox{QPOFS} \ar[llu] \ar@<0.5ex>[rru] \\
&& \mbox{POFS} \ar[u]^{10.4.1} }$$
\item Synopsis for homotopy torsion theories:
$$\xymatrix{ & \cH(\cA)\mbox{-WHTT} \\
\cH(\cA)\mbox{-HTT} \ar[ru] & & \cH(\cA)\mbox{-QPWHTT} \ar[lu] \ar@/^1.5pc/@<1.5ex>[ldd]^{11.1} \\
& \cH(\cA)\mbox{-QPHTT} \ar[ru] \ar[lu] \ar@<-0.5ex>[d]_{11.1} \\
& \cZ_1(\cA)\mbox{-HTT} \ar@<-0.5ex>[u]_{11.4} \ar@/_1.5pc/@<-0.5ex>[ruu]^{11.3} }$$
\item Comparison between factorization systems and homotopy torsion theories:
$$\xymatrix{\mbox{WOFS} \ar@{<->}[rr]^-{9.8} && \cH(\cA)\mbox{-WHTT} }$$
$$\xymatrix{\mbox{OFS} \ar@{<->}[rr]^-{9.4} && \cH(\cA)\mbox{-HTT} && \mbox{QPWOFS} \ar@{<->}[rr]^-{10.8} && \cH(\cA)\mbox{-QPWHTT}  }$$
$$\xymatrix{\mbox{QPOFS} \ar@{<->}[rr]^-{10.9} && \cH(\cA)\mbox{-QPHTT}  }$$
$$\xymatrix{\mbox{POFS} \ar@{<->}[rr]^-{11.5} && \cZ_1(\cA)\mbox{-HTT}  }$$
\end{enumerate}

\end{document}